\newcommand\NN{\mathbb{N}}
\newcommand\RR{\mathbb{R}}
\newcommand\ZZ{\mathbb{Z}}
\newcommand\MM{\mathcal{M}}
\newcommand\BB{\mathcal{B}}
\newcommand\FF{\mathcal{F}}
\newcommand\GG{\mathcal{G}}
\newcommand\HH{\mathcal{H}}
\newcommand\eps{{\varepsilon}}
\DeclareMathOperator\sP{P}   
\newcommand{\rP}{\mathrm{P}} 
\newcommand{\rE}{\mathrm{E}} 
\DeclareMathOperator\dist{dist}
\DeclareMathOperator\defin{def}
\DeclareMathOperator\ord{ord}
\renewcommand{\mod}{\mathrm{mod}\ }
\newtheorem{theorem}{Theorem}[section]
\newtheorem{definition}[theorem]{Definition}
\newtheorem{lemma}[theorem]{Lemma}
\newtheorem{remark}[theorem]{Remark}
\newtheorem{problem}{Problem}[section]
\newcommand{\address}{Address: Department of Mathematics, University of North Texas, 1155 Union Circle \#311430, Denton, TX 76203-5017, USA; E-mail: allaart@unt.edu, kiko@unt.edu}
\numberwithin{equation}{section}
\title{The Takagi function: a survey}
\author{Pieter C. Allaart and Kiko Kawamura \footnote{\address}}
\begin{document}

\maketitle

\section{Introduction}

More than a century has passed since Takagi \cite{Takagi} published his simple example of a continuous but nowhere differentiable function, yet Takagi's function -- as it is now commonly referred to despite repeated rediscovery by mathematicians in the West -- continues to inspire, fascinate and puzzle researchers as never before. For this reason, and also because we have noticed that many aspects of the Takagi function continue to be rediscovered with alarming frequency, we feel the time has come for a comprehensive review of the literature. Our goal is not only to give an overview of the history and known characteristics of the function, but also to discuss some of the fascinating applications it has found -- some quite recently! -- in such diverse areas of mathematics as number theory, combinatorics, and analysis. We also include a section on generalizations and variations of the Takagi function. In view of the overwhelming amount of literature, however, we have chosen to limit ourselves to functions based on the ``tent map". In particular, this paper shall not make more than a passing mention of the Weierstrass function and is not intended as a general overview of continuous nowhere-differentiable functions. We thank Prof. Paul Humke for encouraging us to write this survey, and for issuing periodic cheerful reminders.

\subsection{Early history}

\begin{figure}
\begin{center}
\epsfig{file=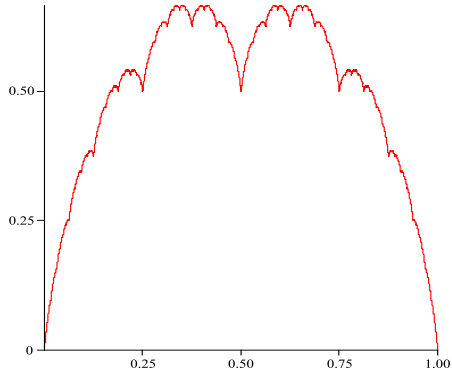, height=.25\textheight, width=.4\textwidth}
\caption{Graph of the Takagi function}
\label{fig:Takagi}
\end{center}
\end{figure}

Takagi's function is indeed simple: in modern notation, it is defined by
\begin{equation}
T(x)=\sum_{n=0}^\infty \frac{1}{2^n}\phi(2^n x),
\label{eq:Takagi-def}
\end{equation}
where $\phi(x)=\dist(x,\ZZ)$, the distance from $x$ to the nearest integer. The graph of $T$ is shown in Figure \ref{fig:Takagi}. Takagi himself expressed his function differently, and this is perhaps one reason (in combination with Japan's isolation at the beginning of the twentieth century) why it was largely overlooked in the West. Unlike for the more famous Weierstrass function, it is easy to show that $T$ has at no point a finite derivative; we include the short proof due to Billingsley \cite{Billingsley} in Section \ref{subsec:derivatives}. However, it does possess an infinite derivative at many points, and for this reason Knopp \cite{Knopp}, in his 1918 review of the rapidly growing body of ``strange" functions, did not consider it truely nowhere differentiable. Knopp outlined his own geometric method for producing functions which have no derivative, finite or infinite, at any point. The example most similar to the Takagi function is
$$f(x)=\sum_{n=0}^\infty a^n\phi(b^n x),$$
where $0<a<1$, $b$ is an integer and $ab>4$. Knopp's general construction also includes the Weierstrass function and Faber's example \cite{Faber} as special cases.

A variant of Takagi's function, using the base 10, was discovered in 1930 by Van der Waerden, who credits Dr.\,A.\,Heyting for sending in the proof of its nondifferentiability, in response to a problem in the publication {\em Wiskundige Opgaven} of the Dutch Mathematical Society. Three years later, Hildebrandt \cite{Hildebrandt} showed that one can more simply use the base 2, thus rediscovering Takagi's original example. An editorial note affixed to Hildebrandt's paper solicited answers to the ``interesting and probably not too difficult" question at which set of points $T(x)$ has an infinite derivative. Surprisingly, it would take 77 years for this natural question to be answered correctly, perhaps because the answer is not all that easy to guess; see Section \ref{subsec:derivatives} below.

In 1939 the Takagi function was rediscovered by Tambs-Lyche \cite{Tambs-Lyche}, who was also inspired by Van der Waerden's paper and motivated by a desire to give ``an example easy to understand for beginning students of analysis". Tambs-Lyche defined it by a formula different from both \eqref{eq:Takagi-def} and Takagi's original definition (see Section \ref{sec:expressions}), and stated without proof its equivalence to \eqref{eq:Takagi-def}. Tambs-Lyche was also the first to publish a graph of Takagi's function, hand-drawn but remarkably accurate.

Apparently unaware of Hildebrandt's and Tambs-Lyche's notes, de Rham \cite{deRham1} rediscovered Takagi's function once more in 1957. His main contribution, however, was to identify $T$ as a member of a more general class of functions which are solutions to a certain family of functional equations; in today's language, de Rham observed that the Takagi function is self-affine. His paper soon inspired Kahane \cite{Kahane} to determine the points of global and local extremum of $T$, and to modify the definition \eqref{eq:Takagi-def} in order to create functions with a prescribed modulus of continuity.

By the 1960s, the Takagi-van der Waerden function was sufficiently well known that it could be used as the key element in solutions to other problems, both in classical real analysis and in number theory. Lipinski \cite{Lipinski} used it in his elegant characterization of zero sets of continuous nowhere differentiable functions, after Schubert \cite{Schubert} had started the investigation. And Trollope \cite{Trollope} observed that the Takagi function was the missing piece of the puzzle in the binary digital sum problem; his proof was simplified and extended further by Delange \cite{Delange}. These applications are described in detail in Sections \ref{subsec:app-number-theory} and \ref{subsec:app-analysis}, respectively.

\subsection{The Takagi function comes home}

Interest in the Takagi-van der Waerden function spiked after 1980, with two more or less independent streams of publications. In the West, Billingsley \cite{Billingsley} drew new attention to the function with his short note in the {\em American Mathematical Monthly}, providing perhaps the most lucid proof of the function's nowhere differentiability. His argument was modified by Cater \cite{Cater} to show that $T$ does not even possess a finite one-sided derivative anywhere, and Shidfar and Sabetfakhri \cite{Shidfar} proved that $T$ is Lipschitz of every order $\alpha<1$. A sharper result was obtained by Mauldin and Williams \cite{Mauldin-Williams}, who investigated a much larger class of functions defined by infinite series and showed that the Takagi function is ``convex Lipschitz" of order $h\log(1/h)$. Anderson and Pitt \cite{Anderson} slightly improved on this by showing that
\begin{equation*}
T(x+h)-T(x)=O(h\log(1/|h|), \quad\mbox{as $h\to 0$},
\end{equation*}
and this estimate is the best possible. As a result, the Hausdorff dimension of the graph of $T$ is one.

Meanwhile, the Takagi function had become popular in the country of its birth, due to the influential paper by Hata and Yamaguti \cite{Hata-Yamaguti}. Besides finally restoring credit to its original inventor, these authors did much to elevate Takagi's function beyond the realm of recreational mathematics, by pointing out its connection with chaotic dynamical systems and proving a beautiful relationship between $T$ and Lebesgue's singular function (also called Salem's function or the Riesz-Nagy function). Hata and Yamaguti also replaced the factor $1/2^n$ in \eqref{eq:Takagi-def} by an arbitrary constant $c_n$, calling their new family of functions the Takagi class. K\^ono \cite{Kono} characterized completely the differentiability properties of members of the Takagi class -- there are three qualitatively different cases -- and proved several other results about these functions, most of them of a probabilistic nature. Gamkrelidze \cite{Gamkrelidze} later applied K\^ono's methods to obtain a Central Limit Theorem-type result for the small-scale oscillations of $T$. In the same year as Hata and Yamaguti's paper, Baba \cite{Baba} calculated the maxima of the general Takagi-van der Waerden function (with arbitrary base $r\geq 2$), after Martynov \cite{Martynov1} had rediscovered Kahane's result about the maximum of $T$. Tsujii \cite{Tsujii} constructed a Takagi-like function of two variables, and Yamaguchi et al. \cite{YamTanMi} viewed the graph of $T$ as the invariant repeller of a dynamical system.

\subsection{Recent work}

In the last two decades, the literature on the Takagi function and related topics seems to have grown exponentially. Papers from this period can be loosely classified into three categories: papers about the Takagi function itself, papers dealing primarily with applications, and papers discussing various generalizations and variations. Some papers fit more than one category. It is impossible to describe each individual contribution in this introduction. We limit ourselves here to succinct groupings of papers by topic, referring to later sections for the details.

\bigskip
1. {\em Papers about the Takagi function itself.} These can be further divided as follows. Kairies et al. \cite{KaiDarFra} and Kairies \cite{Kairies} characterize $T$ by its functional equations. Other papers, such as Brown and Kozlowski \cite{Brown}, Abbott et al. \cite{Abbott} and Watanabe \cite{Watanabe}, focus on various local continuity properties. The infinite derivatives of $T$ are dealt with in Kr\"uppel \cite{Kruppel1,Kruppel3} and Allaart and Kawamura \cite{AK2}. There is also a sustained effort ongoing to understand the complicated level set structure of $T$; see Buczolich \cite{Buczolich2}, Maddock \cite{Maddock}, Lagarias and Maddock \cite{LagMad1,LagMad2}, Allaart \cite{Allaart4,Allaart5}, and de Amo et al. \cite{ABDF}. A richly illustrated expository article by Martynov \cite{Martynov2} gives step-by-step explanations of the main characteristics of $T$, aimed at undergraduate students.

\bigskip
2. {\em Papers concerned with applications.} A number of authors have extended Trollope's result about binary digital sums in various directions. The papers most closely related to the Takagi function are Okada et al. \cite{Okada}, Kobayashi \cite{Kobayashi} and Kr\"uppel \cite{Kruppel2}. Recently, H\'azy and P\'ales \cite{Hazy-Pales} and Boros \cite{Boros} found Takagi's function to be the extremal case in the theory of approximately midconvex functions. Their work was elaborated on by Tabor and Tabor \cite{Tabor1,Tabor2} and Mak\'o and P\'ales \cite{Mako-Pales}, and this last paper includes many further references. Allaart \cite{Allaart3} reduces the crucial inequality in the above papers to a simple inequality for binary digital sums, thus linking the two applications.
Takagi's function also arises naturally as the limit in certain counting problems in graph theory; see Frankl et al. \cite{Frankl}, Knuth \cite{Knuth} and Guu \cite{Guu}. It even has been used in an equivalent statement of the Riemann hypothesis; see Balasubramanian et al. \cite{Balasub}.

\bigskip
3. {\em Papers about generalizations and variations.} There are literally hundreds of papers about generalizations of the Takagi function. Many replace the tent map $\phi$ with a more general bounded ``base" function; some also introduce random phase shifts. We will not discuss such functions here, but limit ourselves to  generalizations based on the tent map. The most direct extension, a subcollection of the Takagi class, is the family of functions $f_\alpha(x)=\sum_{n=0}^\infty\alpha^n\phi(2^n x)$ where $0<\alpha<1$. They were studied for $\alpha>1/2$ by Ledrappier \cite{Ledrappier}, who computed their Hausdorff dimension, and for $\alpha<1/2$ by Tabor and Tabor \cite{Tabor1,Tabor2} and Allaart \cite{Allaart3}. Other extensions allow the ``tents" at each stage of the construction to be flipped up or down individually. This way one obtains for instance the Gray Takagi function of Kobayashi \cite{Kobayashi} or the function $T^3$ of Kawamura \cite{Kawamura1}. Anderson and Pitt \cite{Anderson}, Abbott et al. \cite{Abbott} and Allaart \cite{Allaart2} investigate general properties of this larger class of functions. Sekiguchi and Shiota \cite{Sekiguchi-Shiota}, generalizing the work of Hata and Yamaguti, obtained another family of continuous functions, which were examined more closely by Allaart and Kawamura \cite{AK1}. A version of the Takagi function with random signs is studied in Allaart \cite{Allaart1}, and Kawamura \cite{Kawamura2} considers the composition of $T$ with a singular function. Finally, Sumi \cite{Sumi} introduces a complex version of the Takagi function in connection with random dynamics in the complex plane.

\subsection{Organization of this paper}

This survey is organized as follows. Section \ref{sec:analytic} focuses on analytic aspects of the Takagi function. We give Billingsley's proof of the nowhere-differentiability of $T$ and characterize the set of points where $T$ has an infinite derivative. In Section \ref{subsec:continuity}, we treat the H\"older continuity of $T$ and explain the work of Abbott et al. \cite{Abbott} regarding slow points. This is followed by a more detailed examination of the modulus of continuity of $T$.

Section \ref{sec:graph} deals with the graph of $T$. We first discuss the global and local extrema of $T$. Then we point out the partial self-similarity of the graph and illustrate how to use this to prove a specific theorem, namely that the graph of $T$ has $\sigma$-finite linear measure.

In Section \ref{sec:expressions} we give a number of different expressions for $T(x)$, show how these can be derived from one another, and explain how they have been used to prove various aspects of the Takagi function.

Section \ref{sec:FE} gives functional equations for $T$, presents $T$ as the unique bounded solution of a system of infinitely many difference equations, and discusses the connection of $T$ with Lebesgue's singular function.

Section \ref{sec:level-sets} is devoted to the level sets of $T$. This area of research is currently very active: Nearly all the results in this section were found in the last five years or so. We outline a proof, based on the partial self-similarity ideas of Section \ref{sec:graph}, of the fact that almost all level sets of $T$ are finite, and give an overview of the other known facts about the level sets. The section ends with a list of open problems.

Section \ref{sec:generalizations} gives an overview of some of the generalizations of the Takagi function and other related functions. This includes the general Takagi-van der Waerden functions, the Takagi class, and the Zygmund spaces $\Lambda_d^*$, $\lambda_d^*$ and $\Lambda_{d,1}^*$, all of which are in some sense fairly direct extensions of the Takagi function. This section too concludes with a list of open problems.

Section \ref{sec:applications} deals with applications, and is divided into four parts. Subsection \ref{subsec:app-number-theory} presents Trollope's formula for the sum of binary digits of the first $N$ positive integers and discusses several related results. In Subsection \ref{subsec:app-combinatorics} we treat applications of the Takagi function to the problem of finding the minimum shadow size in uniform hypergraphs and to the edge-discrete isoperimetric problem on the $n$-cube. Subsection \ref{subsec:app-analysis} deals with applications in classical real analysis and consists of two parts: one on the use of $T$ in Lipinski's characterization of zero sets of continuous nowhere differentiable functions, and one on the role of $T$ and its generalizations in approximate convexity problems. Finally, Subsection \ref{subsec:app-Riemann} explains the connection between Takagi's function and the Riemann hypothesis.

\bigskip
We have not attempted to give equal coverage to all the players in this arena. The things we have chosen to emphasize reflect our interests and expertise, not the importance or quality of the cited works.

While we were preparing this article, we learned that Jeffrey Lagarias \cite{Lagarias} was working on a survey paper of his own. The two surveys evolved for the most part independently, and while there is inevitably a considerable degree of overlap, the two surveys emphasize different things. For example, we treat in detail the differentiability aspects and fine structure of the graph of $T$, and discuss various generalizations and applications in considerable detail. (Hence the length of the last two sections of this paper.) Lagarias, on the other hand, focuses on connections of the Takagi function with several areas of analysis, including wavelets, complex power series, and dynamical systems. In view of this, we feel that our survey and that of Lagarias complement each other quite well.

\subsection{Frequently used notation} \label{subsec:notation}

We collect here some notation that will be used regularly throughout this paper. First, for definiteness, we let $\NN$ denote the set of natural numbers, and $\ZZ_+$ the set of nonnegative integers. Most important is the binary expansion of a point $x\in[0,1)$, which we denote by 
\begin{equation*}
x=\sum_{n=1}^\infty \frac{\eps_n}{2^n}=0.\eps_1\eps_2\dots\eps_n\dots, \qquad \eps_n\in\{0,1\}.
\end{equation*}
For dyadic rational $x$ (i.e. $x$ of the form $x=k/2^m$ with $k\in\ZZ_+$ and $m\in\NN$) we choose the representation ending in all zeros.
When necessary, to avoid confusion, we write $\eps_n(x)$ instead of $\eps_n$.

Let $I_n:=I_n(x)$ be the number of ones, and $O_n:=O_n(x)$ the number of zeros, among the first $n$ binary digits of $x$, and let $D_n:=D_n(x):=O_n(x)-I_n(x)$. Thus, we have
\begin{equation*}
I_n=\sum_{k=1}^n \eps_n, \qquad O_n=n-I_n,
\end{equation*}
and
\begin{equation*}
D_n=\sum_{k=1}^n(1-2\eps_k)=\sum_{k=1}^n (-1)^{\eps_k}.
\end{equation*}
If the limit
\begin{equation}
d_1(x):=\lim_{n\to\infty}\frac{1}{n}\sum_{k=1}^{n}\eps_k,
\label{eq:density1}
\end{equation}
exists, we call $d_1(x)$ the {\em density} (or long-run frequency) of the digit ``$1$" in the binary expansion of $x$. In that case, the number
\begin{equation*}
d_0(x):=1-d_1(x)
\end{equation*}
is the density of the digit ``$0$". 

The orthogonal projections onto the $x$- and $y$-axes will be denoted by $\pi_X$ and $\pi_Y$, respectively.

By $\HH^\alpha$ we will denote $\alpha$-dimensional Hausdorff measure, and by $\dim_H A$, the Hausdorff dimension of a set $A$. For a function $f$, $\dim_H(f)$ denotes the Hausdorff dimension of the graph of $f$.

\section{Analytic properties} \label{sec:analytic}

In this section we focus on analytic aspects of the Takagi function, including infinite derivatives, H\"older continuity and slow points. We begin with a short proof of the function's nowhere-differentiability.

\subsection{Derivatives, or lack thereof} \label{subsec:derivatives}

Takagi himself gave a proof of the fact that $T$ has nowhere a finite derivative \cite{Takagi}, as did Hildebrandt \cite{Hildebrandt} and de Rham \cite{deRham1}. Van der Waerden's simple proof for the base 10 case, however, does not immediately transfer to the case of base 2. While all published proofs of non-differentiability follow more or less the same logic, the one by Billingsley \cite{Billingsley} is arguably the most natural, and that is the one we present here.

\begin{theorem}[Takagi]
The Takagi function $T$ does not possess a finite derivative at any point.
\end{theorem}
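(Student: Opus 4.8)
The plan is to follow Billingsley's dyadic argument. Fix a point $x\in[0,1)$ with binary expansion $x=0.\eps_1\eps_2\dots$, and for each $n\in\NN$ sandwich $x$ between the consecutive dyadic rationals
\begin{equation*}
p_n:=\frac{\lfloor 2^n x\rfloor}{2^n}\le x< \frac{\lfloor 2^n x\rfloor+1}{2^n}=:q_n,
\end{equation*}
so that $q_n-p_n=2^{-n}$ and both sequences converge to $x$. The key observation is that for $m\ge n$ the function $\phi(2^m\cdot)$ is affine on the interval $[p_n,q_n]$ with slope $\pm 2^m$, hence contributes $\pm 1$ to the difference quotient $(T(q_n)-T(p_n))/(q_n-p_n)$, while for $m<n$ the term $2^{-m}\phi(2^m\cdot)$ is affine on $[p_n,q_n]$ as well (since $p_n,q_n$ lie in a common dyadic interval of length $2^{-m}$) and contributes a slope equal to $\pm 1$ too; summing the series term by term one finds
\begin{equation*}
\frac{T(q_n)-T(p_n)}{q_n-p_n}=\sum_{m=0}^{n-1}(\pm 1),
\end{equation*}
an integer of the same parity as $n$. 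The hard part — and the crux of Billingsley's argument — is to identify the signs: the slope of $2^{-m}\phi(2^m\cdot)$ on $[p_n,q_n]$ is $+1$ if $\eps_{m+1}=0$ and $-1$ if $\eps_{m+1}=1$, so the difference quotient equals $D_n(x)=\sum_{m=1}^n(1-2\eps_m)$ in the notation of Section~\ref{subsec:notation}.

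Next I would derive a contradiction from the assumption that $T'(x)$ exists and is finite. Since $p_n\uparrow x$ and $q_n\downarrow x$ with $p_n\le x\le q_n$, the "straddle" version of the derivative forces
\begin{equation*}
\frac{T(q_n)-T(p_n)}{q_n-p_n}\longrightarrow T'(x),
\end{equation*}
so $D_n(x)$ would have to converge to the finite limit $T'(x)$. But $D_{n+1}(x)-D_n(x)=\pm 1$ for every $n$, so the sequence $(D_n(x))$ changes by exactly $1$ at each step and therefore cannot be Cauchy — it can never settle down to a limit. This contradiction shows $T$ has no finite derivative at $x$, and since $x$ was arbitrary, the theorem follows.

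One technical point deserving care is the passage from the two-sided straddle quotient to the genuine derivative: if $p_n<x<q_n$ strictly, one writes the straddle quotient as a convex combination of the one-sided quotients $(T(q_n)-T(x))/(q_n-x)$ and $(T(x)-T(p_n))/(x-p_n)$, both of which converge to $T'(x)$ by assumption, hence so does the convex combination; if $x$ is itself dyadic the argument is even simpler since then $x=p_n$ for large $n$ and one uses the right-hand quotients alone. I expect the main obstacle to be purely bookkeeping — correctly tracking the slope of each affine piece $2^{-m}\phi(2^m\cdot)$ on the little interval $[p_n,q_n]$ and verifying the parity/sign claim — rather than anything conceptually deep; once the difference quotient is correctly identified with $D_n(x)$, the "increments of size $1$" obstruction to convergence does all the work.
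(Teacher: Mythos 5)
Your proposal is Billingsley's argument, the same one the paper presents, and its main line --- identifying the straddle quotient over $[p_n,q_n]$ with $D_n(x)$, invoking the straddle lemma, and observing that an integer sequence whose increments are exactly $\pm 1$ cannot converge --- is correct. You even supply the convex-combination justification of the straddle lemma, a detail the paper leaves implicit.

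There is, however, one false step in your computation of the difference quotient. You claim that for $m\ge n$ the function $\phi(2^m\cdot)$ is affine on $[p_n,q_n]$ with slope $\pm 2^m$ and hence ``contributes $\pm 1$'' to the quotient. Neither half of this is true: the interval $[p_n,q_n]$ has length $2^{-n}\ge 2^{-m}$ and therefore contains at least one full period of $\phi(2^m\cdot)$, so on it that function is a full tent (or a union of $2^{m-n}$ tents), not affine; and if each such term really contributed $\pm 1$, your displayed formula $\sum_{m=0}^{n-1}(\pm 1)$ could not be right, since the sum would then have infinitely many nonzero terms. The correct reason these terms drop out is the one the paper gives: for $m\ge n$ both $2^m p_n$ and $2^m q_n$ are integers, so $\phi(2^m p_n)=\phi(2^m q_n)=0$ and the contribution to the difference $T(q_n)-T(p_n)$ is exactly zero. (Also, for $m<n$ the affineness holds because $[p_n,q_n]$ sits inside a dyadic interval of length $2^{-m-1}$ --- a half-period of $\phi(2^m\cdot)$ --- not merely one of length $2^{-m}$, on which $\phi(2^m\cdot)$ would again be a full tent.) With these corrections your proof coincides with the paper's.
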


\begin{proof}(Billingsley)
Put $\phi_k(x)=2^{-k}\phi(2^k x)$ for $k=0,1,\dots$. Fix a point $x$, and, for each $n\in\NN$, let $u_n$ and $v_n$ be dyadic rationals of order $n$ with $v_n-u_n=2^{-n}$ and $u_n\leq x<v_n$. Then
\begin{equation*}
\frac{T(v_n)-T(u_n)}{v_n-u_n}=\sum_{k=0}^{n-1}\frac{\phi_k(v_n)-\phi_k(u_n)}{v_n-u_n},
\end{equation*}
since $\phi_k(u_n)=\phi_k(v_n)=0$ for all $k\geq n$. But for $k<n$, $\phi_k$ is linear on $[u_n,v_n]$ with slope $\phi_k^+(x)$, the right-hand derivative of $\phi_k$ at $x$. Thus,
\begin{equation*}
\frac{T(v_n)-T(u_n)}{v_n-u_n}=\sum_{k=0}^{n-1}\phi_k^+(x).
\end{equation*}
Since $\phi_k^+(x)=\pm 1$ for each $k$, this last sum cannot converge to a finite limit. Hence, $T$ does not have a finite derivative at $x$.
\end{proof}

Billingsley's argument was modified by Cater \cite{Cater} to show that $T$ does not have a finite {\em one-sided} derivative anywhere. The above proof makes it plausible, however, that there exist points with $T'(x)=\pm\infty$. An Editor's note affixed to Hildebrandt's paper asked readers to characterize the set of such points. The call was answered three years later by Begle and Ayres \cite{Begle}, who claimed that $T'(x)=\infty$ if $D_n(x)\to\infty$, and $T'(x)=-\infty$ if $D_n(x)\to-\infty$. This is certainly believable at first sight: If we agree that for dyadic rational points we choose the binary expansion ending in all zeros, then the last equation in the above proof can be written as
\begin{equation}
\frac{T(v_n)-T(u_n)}{v_n-u_n}=D_n(x).
\label{eq:dyadic-secant-slopes}
\end{equation}
Convergence of the above slopes to $\pm\infty$ is necessary in order that $T'(x)=\pm\infty$, but it is of course, {\em a priori}, not sufficient. (In fact, there are examples of nowhere differentiable functions for which the dyadic derivative exists almost everywhere \cite[Example 3.3]{Anderson}.) Begle and Ayres assumed that for fixed $n$, the slope $D_n(x)$ cannot jump by more than $\pm 2$ as one moves from one dyadic interval into the next. But this is already false for $n=4$, as $D_4(x)=-2$ for $7/16\leq x<1/2$, and $D_4(x)=2$ for $1/2\leq x<9/16$.

The paper by Begle and Ayres appears to have been forgotten soon after its publication, as was Hildebrandt's note. In any case, there is no evidence in the literature that the mistake was ever noticed -- until a few years ago, that is. We learned of Begle and Ayres' work from an historical survey by Prof. H. Okamoto, written in Japanese. Knowing that Prof. M. Kr\"uppel \cite{Kruppel1} had recently written about the improper derivatives of the Takagi function, we sent him a courtesy notification. Kr\"uppel's stunning reply was that, while he did not know about Begle and Ayres, the result could not possibly be true, as his own paper contained a counterexample! (Curiously, Lemma 7.4 of Anderson and Pitt \cite{Anderson} implies the same incorrect statement. In their case, however, the culprit appears to be a typographical error.)

We present Kr\"uppel's example here in somewhat simplified form. Let $x=\sum_{n=1}^\infty 2^{-a_n}$, where $a_n=4^n$. Then certainly $D_n(x)\to\infty$. A well-known formula for $T(x)$ at dyadic rational points is
\begin{equation}
T\left(\frac{k}{2^m}\right)=\frac{1}{2^m}\sum_{j=0}^{k-1}(m-2s_j),
\label{eq:Takagi-dyadic}
\end{equation}
where $s_j$ is the number of ones in the binary representation of the integer $j$. (There are several ways to derive this formula; see Section \ref{sec:expressions}.) For given $m$, let $k$ be the integer such that $k/2^m<x<(k+1)/2^m$. Then
the secant slopes over the dyadic intervals $[k/2^m,(k+1)/2^m]$ containing $x$ indeed tend to $+\infty$ in view of \eqref{eq:dyadic-secant-slopes}. However, if we put $m=a_{n+1}-1$, then $s_k=n$, $s_{k-1}=n+a_{n+1}-a_n-2$, and $s_{k-2}=n+a_{n+1}-a_n-3$. Thus, \eqref{eq:Takagi-dyadic} yields
\begin{align*}
2^m\left[T\left(\frac{k+1}{2^m}\right)-T\left(\frac{k-2}{2^m}\right)\right]
&=3m-2s_k-2s_{k-1}-2s_{k-2}\\
&=4a_n-a_{n+1}-6n+7\\
&\to-\infty,
\end{align*}
as $n\to\infty$. Since the intervals $[(k-2)/2^m,(k+1)/2^m]$ also contain $x$, it follows that $T$ cannot have an infinite derivative at $x$. 

Intrigued by these developments, the present authors and Prof. Kr\"uppel independently set out to find the correct answer. The result:

\begin{theorem}[Allaart and Kawamura, Kr\"uppel] \label{thm:infinte-derivatives}
Let $x\in(0,1)$ be non-dyadic, and write
\begin{equation*}
x=\sum_{n=1}^\infty 2^{-a_n},
\end{equation*}
where $\{a_n\}$ is a strictly increasing sequence of positive integers. Then
$T'(x)=\infty$ if and only if
\begin{equation}
a_{n+1}-2a_n+2n-\log_2(a_{n+1}-a_n)\to-\infty.
\label{eq:NS-condition+}
\end{equation}
\end{theorem}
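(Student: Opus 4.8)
The plan is to work directly from the dyadic secant-slope formula \eqref{eq:dyadic-secant-slopes} together with the sharper second-order information encoded in \eqref{eq:Takagi-dyadic}. The key observation is that, for a non-dyadic $x$, the condition $T'(x)=\infty$ is equivalent to: for \emph{every} sequence $h_m\to 0$ (from either side), $(T(x+h_m)-T(x))/h_m\to+\infty$. Since $T$ is continuous and piecewise-linear on dyadic intervals at each finite stage, it suffices to control the secant slopes from $x$ to dyadic rationals $j/2^m$ as $m\to\infty$; more precisely, writing $k=k_m$ for the integer with $k/2^m\le x<(k+1)/2^m$, one should show that $T'(x)=\infty$ iff
\begin{equation*}
\min_{0\le i\le K}\, 2^m\bigl[T((k+i+1)/2^m)-T((k-K+i)/2^m)\bigr]/(K+1)\to\infty
\end{equation*}
uniformly over an appropriate (slowly growing) range of window sizes $K=K_m$ and offsets. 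So the first step is a reduction lemma: $T'(x)=\infty$ is controlled by the worst secant slope over short blocks of consecutive dyadic intervals of level $m$ straddling $x$, and it suffices to check this at the levels $m=a_{n+1}-1$ (the ``bad'' levels just before a new $1$ appears), since between consecutive terms $a_n$ the digit sequence is all zeros and the slopes $D_m(x)$ are monotone there.

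The second step is the explicit computation. Fix $n$ large and set $m=a_{n+1}-1$, $k=k_m$. Using \eqref{eq:Takagi-dyadic}, the difference $2^m[T((k+1)/2^m)-T((k-\ell)/2^m)]$ equals $(\ell+1)m-2\sum_{j=k-\ell}^{k}s_j$. The integer $k$ in binary is $0.\eps_1\dots\eps_m$ (with trailing zeros stripped), and subtracting $1,2,\dots$ from $k$ flips the long run of trailing zeros of $k$ — there are exactly $a_{n+1}-a_n-1$ of them at this level — into ones, so $s_{k-1}=s_k+(a_{n+1}-a_n)-2$, $s_{k-2}=s_k+(a_{n+1}-a_n)-3$, and in general $s_{k-i}\approx s_k+(a_{n+1}-a_n)-O(\log i)$ for $1\le i<2^{a_{n+1}-a_n-1}$. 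Summing these and using $s_k=n$, one finds that the minimal block-secant slope over a window of size $K$ behaves, to leading order, like
\begin{equation*}
m - 2\bigl[(a_{n+1}-a_n) + n - O(\log K)\bigr] = a_{n+1}-1 - 2(a_{n+1}-a_n) - 2n + O(\log K),
\end{equation*}
which rearranges (replacing $a_{n+1}-1$ by $a_{n+1}$ up to a constant, and noting $a_{n+1}-(a_{n+1}-a_n)=a_n$) to essentially $-\bigl(a_{n+1}-2a_n+2n-\log_2(a_{n+1}-a_n)\bigr)$ once one optimizes the window size $K$ against the logarithmic gain. This is precisely where the term $\log_2(a_{n+1}-a_n)$ in \eqref{eq:NS-condition+} comes from: a longer run of zeros before the next $1$ gives more room to choose an offset that drags the secant slope down, and the optimal length of that offset is of order $a_{n+1}-a_n$, contributing a $\log_2(a_{n+1}-a_n)$ saving.

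The third step assembles the two directions. For necessity, the computation above exhibits, whenever \eqref{eq:NS-condition+} fails along a subsequence, a sequence of dyadic intervals containing $x$ whose secant slopes stay bounded above (or even go to $-\infty$, as in Kr\"uppel's example), so $T'(x)\ne\infty$. For sufficiency, one must show that if \eqref{eq:NS-condition+} holds then \emph{all} secant slopes from $x$ — not just those at the critical levels $m=a_{n+1}-1$ and not just over short blocks — tend to $+\infty$; this requires checking the intermediate levels $a_n\le m<a_{n+1}-1$ (where $D_m(x)=D_{a_n}(x)$ is constant and large) and controlling secants over \emph{long} blocks, which only helps since averaging many slopes of which most are large keeps the average large. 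The main obstacle, and the part deserving the most care, is this uniformity in the sufficiency direction: one needs a clean quantitative statement that the block-secant slope over \emph{any} window straddling $x$ at \emph{any} level is bounded below by (a constant times) $\min_n\bigl(a_{n+1}-2a_n+2n-\log_2(a_{n+1}-a_n)\bigr)$ up to lower-order terms, with the worst case being exactly the short windows at the pre-jump levels analyzed in step two. Handling the bookkeeping of $s_{k-i}$ for all $i$ in a window — rather than just $i=1,2$ as in the motivating example — is the one genuinely technical point, and is best done by grouping the offsets $i$ according to the position of the highest zero of $k$ that gets flipped, turning the sum $\sum s_{k-i}$ into a sum over dyadic blocks that telescopes neatly.
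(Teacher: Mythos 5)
First, a point of reference: the survey does not actually prove Theorem \ref{thm:infinte-derivatives} --- it states it with attribution to \cite{AK2} and \cite{Kruppel1,Kruppel3} and supplies only the ingredients you also start from, namely the secant formula \eqref{eq:dyadic-secant-slopes}, the dyadic evaluation \eqref{eq:Takagi-dyadic}, and Kr\"uppel's counterexample (which is exactly one instance of your necessity direction). So your plan is being judged as a free-standing proof sketch, and as such it correctly identifies the right tools and the right critical levels $m=a_{n+1}-1$. But it has genuine gaps, and they sit precisely where the content of the theorem lies. The most serious one is in the second step: you assert that optimizing the window size $K$ against an ``$O(\log K)$'' gain in $\frac1K\sum_i s_{k-i}$ produces exactly the correction $\log_2(a_{n+1}-a_n)$, but this is never derived, and a direct computation along your route does not obviously give that argument of the logarithm. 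If one fixes the level at $m=a_{n+1}-1$ and takes a window of $2^p$ whole intervals to the left of $k/2^m$ plus the partial interval $[k/2^m,x]$, the difference quotient works out to roughly $-(a_{n+1}-2a_n+2n)+p+2^{-p}\cdot(\text{positive term})$, and \emph{which} positive term appears (hence what the optimal $p$ is, hence what ends up inside the $\log_2$) depends on correctly weighting the partial interval containing $x$ --- your uniform $1/(K+1)$ weighting does not do this --- and on the constraint $p\le a_{n+1}-1-a_n$ coming from where the run of trailing zeros of $k$ ends. Since the whole point of the theorem is that the log term is ``sometimes a difference maker'' (the paper's $a_n=2^n+n$ versus $a_n=2^n+(1+\eps)n$ examples), a proof that does not pin down this term exactly has not proved the statement. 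Relatedly, your justification for restricting to the levels $m=a_{n+1}-1$ (``the slopes $D_m(x)$ are monotone there'') addresses only the slope of the interval \emph{containing} $x$; what drives the left-hand difference quotients are the slopes $m-2s_{k_m-1}=2a_n-m-2n+2$ of the neighboring intervals, and the trade-off between how negative these are and the weight they carry is optimized at an intermediate level inside the gap $(a_n,a_{n+1})$, which is exactly where $\log_2(a_{n+1}-a_n)$ comes from.

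The second gap is in the sufficiency direction, which you rightly flag as the crux but then partly wave away. The claim that long windows ``only help since averaging many slopes of which most are large keeps the average large'' is false as stated: a window can contain a single interval whose slope is as negative as $-(a_{n+1}-a_n)+O(n)$, and whether the average survives is a quantitative question --- indeed it is \emph{the} quantitative question the theorem answers. A correct sufficiency argument must (i) reduce arbitrary increments $T(x+h)-T(x)$ to dyadic data via the decomposition $T=T_m+R_m$ with $2^{-m-1}\le|h|<2^{-m}$, where the tail contributes only $O(1)$ to the quotient and the head is a convex combination of at most two consecutive level-$m$ slopes with explicitly controlled weights; and then (ii) bound that convex combination from below \emph{uniformly over all levels $m$ inside every gap $(a_n,a_{n+1})$ and over all admissible weights}, using the full digit structure of $x$ (not just the single gap at hand, since $s_{k_m-1}$ involves all earlier digits once the window crosses the dyadic point of level $a_n$). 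None of this is present in the proposal beyond a statement of what would need to be true. In short: the skeleton is right and matches what the cited proofs do, but steps one and two need to be replaced by actual lemmas with the correct weighting, and the ``averaging only helps'' shortcut must be abandoned.
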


By the symmetry of the Takagi function, $T'(x)=-\infty$ if and only if $T'(1-x)=\infty$. It is easy to see from the definition \eqref{eq:Takagi-def} that if $x$ is a dyadic rational, then $T'_+(x)=+\infty$ and $T'_-(x)=-\infty$, where $T'_+$ and $T'_-$ denote the right- and left-hand derivatives of $T$, respectively. Combined with these facts, Theorem \ref{thm:infinte-derivatives} gives a complete characterization of the infinite derivatives of $T$.

In fact, the condition \eqref{eq:NS-condition+} is necessary in order that $T'_-(x)=+\infty$. For $T'_+(x)=+\infty$, it is sufficient that $a_n-2n\to\infty$, and this can be seen to be equivalent to the Begle and Ayres condition that $D_n(x)\to\infty$.
Allaart and Kawamura \cite{AK2} give several examples illustrating the condition \eqref{eq:NS-condition+}. For instance, the condition holds for $a_n=3n$; for any increasing polynomial of degree $2$ or higher; and for any exponential sequence $a_n=\lfloor\alpha^n\rfloor$ with $1<\alpha<2$. On the other hand, it fails whenever $\limsup_{n\to\infty}a_{n+1}/a_n>2$. The logarithmic term in \eqref{eq:NS-condition+} is sometimes a difference maker: The sequence $a_n=2^n$ does not satisfy \eqref{eq:NS-condition+}; neither does $a_n=2^n+n$. But $a_n=2^n+(1+\eps)n$ satisfies \eqref{eq:NS-condition+} for any $\eps>0$. 

Theorem \ref{thm:infinte-derivatives} implies that, if the density $d_1(x)$ exists and lies strictly between $0$ and $1/2$, then $T'(x)=\infty$. By symmetry, $T'(x)=-\infty$ if $1/2<d_1(x)<1$. As a result, the sets $\{x:T'(x)=\infty\}$ and $\{x:T'(x)=-\infty\}$ have Hausdorff dimension 1.

\subsection{Continuity properties} \label{subsec:continuity}

Since $T$ is nowhere differentiable, it is certainly not Lipschitz. However, Shidfar and Sabetfakhri \cite{Shidfar} showed that $T$ is H\"older continuous of any order $\alpha<1$. That is, for each $0<\alpha<1$, there is a constant $C_\alpha$ such that
\begin{equation*}
|T(x)-T(y)|\leq C_\alpha|x-y|^\alpha, 
\end{equation*}
for all $x$ and $y$ in $[0,1]$. This result prompted an interesting question. A theorem of Marcinkiewicz says that for every Lipschitz function $f$ on $[0,1]$ there is a $C^1$ function $g$ which agrees with $f$ outside a set of arbitrarily small measure, and Brown and Koslowski \cite{Brown} wondered if the Lipschitz requirement in this theorem can be replaced with the weaker condition that $f$ be H\"older continuous of any order $\alpha<1$. They show that this is not so: the Takagi function provides a counterexample, since for any set $M$ of positive Lebesgue measure, the set of difference quotients
\begin{equation*}
\left\{\frac{T(x)-T(y)}{x-y}: x\in M, y\in M \ \mbox{and}\ x\neq y\right\}
\end{equation*}
is unbounded.

While the Takagi function is not Lipschitz, it does satisfy a local Lipschitz condition at each ``slow point". Abbott et al. \cite{Abbott} call a point $x$ in $[0,1]$ a {\em slow point} with constant $K$ ($K$ a positive integer) if $|D_n(x)|\leq K$ for every $n$. They show that there is a uniform bound $P=P(K)>0$ such that for each slow point $x$ with constant $K$ and for each $y\in[0,1]$, $|T(y)-T(x)|\leq P|y-x|$. They also manage to compute the Hausdorff dimension of the set of slow points with constant $K$; it is $1+\log_2 r$, where $r=\cos(\pi/(2(K+1)))$. The paper by Abbott et al. also contains results for more general functions based on the tent map; we will return to it in Section \ref{sec:generalizations}.

The result of Shidfar and Sabetfakhri was sharpened by Anderson and Pitt \cite{Anderson}, who showed that not only $T$, but every function $f$ in the so-called Zygmund space $\Lambda_d^*$ is Lipschitz of order $\theta(y)=y\log(1/y)$; that is to say, there is a constant $M$ such that, for all $x$ and $y$ with $y>0$ sufficiently small,
\begin{equation*}
|f(x+y)-f(x)|\leq My\log(1/y).
\end{equation*}
From this, one can deduce that the graph of $T$ has Hausdorff dimension 1, a result first obtained by Mauldin and Williams \cite{Mauldin-Williams} on which we will elaborate in the next section.

More precise estimates on the oscillations of $T$ were obtained by K\^ono \cite{Kono}. He describes both the ``worst-case" behavior and the ``typical" size (in the Lebesgue sense) of the oscillations. Let
\begin{equation*}
\sigma_u(h)=\log_2(1/h) \qquad\mbox{and} \qquad \sigma_l(h)=\sqrt{\log_2(1/h)}, \qquad h>0.
\end{equation*}

\begin{theorem}[K\^ono 1987] \label{thm:uniform-modulus}
The oscillations of the Takagi function satisfy
\begin{equation*}
\limsup_{|x-y|\to 0}\frac{T(x)-T(y)}{(x-y)\sigma_u(|x-y|)}=1 =-\liminf_{|x-y|\to 0}\frac{T(x)-T(y)}{(x-y)\sigma_u(|x-y|)}.
\end{equation*}
\end{theorem}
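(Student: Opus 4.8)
The plan is to show that the secant slopes $(T(x)-T(y))/(x-y)$ are, up to the factor $\sigma_u(|x-y|)=\log_2(1/|x-y|)$, controlled above and below by quantities behaving like $\pm\log_2(1/|x-y|)$, and that both bounds are sharp. The natural starting point is the identity from Billingsley's proof: for dyadic rationals $u_n<v_n$ of order $n$ with $v_n-u_n=2^{-n}$, equation \eqref{eq:dyadic-secant-slopes} gives
\begin{equation*}
\frac{T(v_n)-T(u_n)}{v_n-u_n}=D_n(u_n),
\end{equation*}
where $D_n=O_n-I_n$ counts the excess of zeros over ones among the first $n$ binary digits. Since $|D_n|\le n$ trivially, this already yields $|T(v_n)-T(u_n)|\le n\,2^{-n}=|v_n-u_n|\log_2(1/|v_n-u_n|)$, i.e. the $\limsup$ in the theorem is at most $1$. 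For general (non-dyadic) $x,y$ one writes $|x-y|$ between consecutive powers of $2$, splits the interval $[x,y]$ at the dyadic point of appropriate order that it contains, and applies the dyadic estimate to each piece together with the Hölder bound (or a direct telescoping argument) on the remaining short pieces; this is routine bookkeeping and should give the upper bound $\limsup\le 1$ in full generality. By the symmetry $T(x)=T(1-x)$, the $\liminf$ statement follows from the $\limsup$ statement (replacing $x,y$ by $1-x,1-y$ flips the sign of the numerator), so it suffices to prove the $\limsup$ equals exactly $1$.

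To prove the lower bound $\limsup\ge 1$, I would exhibit, for each large $n$, a pair $x,y$ with $|x-y|\approx 2^{-n}$ and $(T(x)-T(y))/(x-y)$ as close to $n=\log_2(1/|x-y|)$ as we like. The obvious candidate is to take $u=0$ and $v=2^{-n}$: here $D_n(u)=n$ (all first $n$ digits of $0$ are zeros), so the secant slope is exactly $n$, giving ratio exactly $1$ for this particular pair. More honestly, since the theorem's $\limsup$ is over $|x-y|\to 0$ ranging over all of $[0,1]$, one wants points in the interior as well; but one can translate: pick any dyadic $a=k/2^m$ and consider $x=a$, $y=a+2^{-n}$ with $n$ large, where the first $n-m$ digits after position $m$ are all zero, making $D_n$ as large as $D_m(a)+(n-m)$, which divided by $n$ tends to $1$. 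Thus the supremum of ratios is $1$ and it is attained along sequences, establishing $\limsup=1$.

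The main obstacle — or rather, the step requiring the most care — is the \emph{general-position upper bound}: proving that for \emph{arbitrary} $x<y$ (not just dyadic pairs with $y-x$ a power of $2$) one still has $T(y)-T(x)\le (y-x)\log_2(1/(y-x)) + o\big((y-x)\log_2(1/(y-x))\big)$. The subtlety is that when $[x,y]$ straddles several dyadic intervals of the relevant scale, the various $D_n$-type contributions are over different subintervals and could in principle reinforce; one must check that the geometry of the tent functions $\phi_k$ prevents the total slope from exceeding $n$ by more than a lower-order term. The clean way to handle this is the decomposition $T(y)-T(x)=\sum_{k=0}^\infty 2^{-k}[\phi(2^k y)-\phi(2^k x)]$: for $2^k\le 1/(y-x)$ each term is at most $2^{-k}\cdot 2^k(y-x)=(y-x)$ in absolute value (since $\phi$ is $1$-Lipschitz), contributing at most $\lceil\log_2(1/(y-x))\rceil(y-x)$ in total, while for $2^k>1/(y-x)$ the terms are bounded by $2^{-k}$ and sum to a $O(y-x)$ tail. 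This gives the uniform upper bound with the correct constant, and combined with the lower-bound construction above completes the proof.
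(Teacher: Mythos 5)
Your argument is correct and complete, but note that the survey itself gives no proof of this theorem: it is stated with a citation to K\^ono, whose original approach (as the paper indicates in connection with the companion almost-everywhere result) develops $T$ in Rademacher functions and is aimed at the much harder law-of-the-iterated-logarithm statement. What you supply is an elementary self-contained proof, and the essential content is entirely in your third paragraph: splitting $T(y)-T(x)=\sum_k 2^{-k}\bigl[\phi(2^ky)-\phi(2^kx)\bigr]$ at the scale $2^{-k}\approx y-x$, using that $\phi$ is $1$-Lipschitz for the roughly $\log_2(1/(y-x))$ low-frequency terms and that $\phi$ is bounded for the geometric tail, gives $|T(y)-T(x)|\le (y-x)\log_2\frac{1}{y-x}+O(y-x)$ uniformly, hence $\limsup\le 1$ and $\liminf\ge -1$; the pair $x=2^{-n}$, $y=0$ with $T(2^{-n})=n2^{-n}$ gives ratio exactly $1$, so $\limsup=1$; and the symmetry $T(1-x)=T(x)$ converts this into $\liminf=-1$. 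This makes the first two paragraphs (the dyadic secant slopes, the ``routine bookkeeping'' for non-dyadic pairs, and the translated interior examples) redundant --- in particular you need not worry about placing the extremal pairs in the interior, since the $\limsup$ over $|x-y|\to 0$ is already realized along the sequence $(2^{-n},0)$. I would suggest presenting only the third-paragraph decomposition plus the two short extremal computations as the proof.
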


The extremal case of the above theorem is rare -- at most points $x$ the oscillations are of a smaller order.

\begin{theorem}[K\^ono 1987] \label{thm:local-modulus}
For almost every $x\in[0,1]$, we have
\begin{equation*}
\limsup_{h\to 0} \frac{T(x+h)-T(x)}{h\sigma_l(|h|)\sqrt{2\log\log\sigma_l(|h|)}}=1 =-\liminf_{h\to 0} \frac{T(x+h)-T(x)}{h\sigma_l(|h|)\sqrt{2\log\log\sigma_l(|h|)}}.
\end{equation*}
\end{theorem}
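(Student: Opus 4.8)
The plan is to reduce the theorem to the classical law of the iterated logarithm (LIL) for a simple symmetric random walk, exploiting the self-affine structure of $T$ inside dyadic intervals. For non-dyadic $x$, let $u_n = u_n(x)$ be the left endpoint of the order-$n$ dyadic interval containing $x$. Splitting the series \eqref{eq:Takagi-def} at index $n$, using that each $\phi_k$ with $k<n$ is affine on $[u_n, u_n + 2^{-n}]$ with slope $\phi_k^+(x) = 1 - 2\eps_{k+1}(x)$, that $\sum_{k\ge n}\phi_k(x) = 2^{-n}T(2^n x)$ (with $T$ read as its $1$-periodic extension to $\RR$), and that $\sum_{k=0}^{n-1}\phi_k^+(x) = D_n(x)$ (cf.\ \eqref{eq:dyadic-secant-slopes}), one obtains
\begin{equation*}
T(x) = T(u_n) + D_n(x)(x - u_n) + 2^{-n}\,T(2^n x).
\end{equation*}
Consequently, whenever $x$ and $x+h$ lie in one common order-$m$ dyadic interval,
\begin{equation*}
T(x+h) - T(x) = D_m(x)\,h + 2^{-m}\bigl[\,T(2^m x + 2^m h) - T(2^m x)\,\bigr].
\end{equation*}
By the Anderson--Pitt estimate the bracketed increment is $O\bigl(2^m|h|\log(1/(2^m|h|))\bigr)$, so once $2^{-m}\asymp|h|$ with $n-m = O(\log n)$ the whole second term is $O(|h|\log n)$. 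The final ingredient is probabilistic: under Lebesgue measure the digits $\eps_k(x)$ are i.i.d.\ fair coin flips, hence $D_n(x) = \sum_{k=1}^n(1 - 2\eps_k(x))$ is a simple symmetric random walk, and the LIL gives, for almost every $x$,
\[
\limsup_{n\to\infty}\frac{D_n(x)}{\sqrt{2n\log\log n}} = 1 = -\liminf_{n\to\infty}\frac{D_n(x)}{\sqrt{2n\log\log n}}.
\]
The two normalizations match because $\sigma_l(2^{-n}) = \sqrt n$ and $\log\log\sqrt n = \log\log n - \log 2$, so that $|h|\,\sigma_l(|h|)\sqrt{2\log\log\sigma_l(|h|)} \sim |h|\sqrt{2n\log\log n}$ when $n = \lfloor\log_2(1/|h|)\rfloor\to\infty$.

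For the a.e.\ upper bounds $\limsup\le 1$ and $\liminf\ge -1$: given small $|h|$, set $n = \lfloor\log_2(1/|h|)\rfloor$ and use the a.e.\ fact, a routine Borel--Cantelli estimate, that the longest run of a repeated binary digit among $\eps_1(x),\dots,\eps_n(x)$ is $O(\log n)$. Then adding or subtracting $|h|\le 2^{-n}$ disturbs, through carrying or borrowing, only the last $O(\log n)$ of the first $n$ digits of $x$, so $x$ and $x+h$ share their first $m := n - O(\log n)$ digits and lie in a common order-$m$ interval. The displayed increment formula gives $T(x+h) - T(x) = D_m(x)h + O(|h|\log n)$ with $|D_m(x) - D_n(x)|\le n - m = O(\log n)$, while the LIL gives $|D_n(x)|\le(1+\eps)\sqrt{2n\log\log n}$ for all large $n$; dividing by the normalizer and letting $\eps\downarrow 0$ gives the bounds. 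For the matching a.e.\ lower bounds, recall that ``$\limsup = 1$'' in the displayed LIL means exactly that, for each $\eps>0$ and a.e.\ $x$, there are infinitely many $n$ with $D_n(x)\ge(1-\eps)\sqrt{2n\log\log n}$. For every such (large) $n$ put $h = 2^{-n}$; as before $x$ and $x + 2^{-n}$ agree in their first $m = n - O(\log n)$ digits, so $T(x + 2^{-n}) - T(x) = D_m(x)2^{-n} + O(2^{-n}\log n)$ with $D_m(x)\ge(1 - \eps - o(1))\sqrt{2n\log\log n}$, whence the relevant quotient exceeds $1 - 2\eps + o(1)$ along this subsequence. Thus $\limsup_{h\to 0}(\cdots)\ge 1 - 2\eps$ for every $\eps$, and hence $= 1$; the statement for the $\liminf$ follows in the same way from $D_n(x)\le-(1-\eps)\sqrt{2n\log\log n}$ for infinitely many $n$.

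I expect the main obstacle to be the error accounting rather than anything conceptual. One has to check that sliding from the natural scale $n = \lfloor\log_2(1/|h|)\rfloor$ down to $m = n - O(\log n)$ --- a compromise forced by digit carrying and by crossings of dyadic breakpoints --- changes the principal term $D_m(x)h$ and the normalization only by a factor $1 + o(1)$, and that the remainder term is genuinely $o\bigl(|h|\sqrt{n\log\log n}\,\bigr)$; both hinge on the a.e.\ logarithmic bound on run lengths. The one place where the precise value of the extremal constant is settled is the identity $\sigma_l(2^{-n}) = \sqrt n$, which forces K\^ono's normalizer to coincide at scale $2^{-n}$ with the LIL scale $2^{-n}\sqrt{2n\log\log n}$; that is what makes the extremal constant exactly $1$ rather than some other value.
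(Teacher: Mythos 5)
Your proposal is correct and follows exactly the route the paper attributes to K\^ono --- expressing the local increment of $T$ through the Rademacher sums $D_n(x)$ and invoking the law of the iterated logarithm; the survey itself states the theorem without proof, noting only this method. Your error accounting (dyadic localization of $x$ and $x+h$ up to digit carries of length $O(\log n)$, the $O(|h|\log n)$ remainder from the modulus of continuity, and the matching of K\^ono's normalizer with $|h|\sqrt{2n\log\log n}$ at scale $|h|\asymp 2^{-n}$) supplies the missing details correctly.
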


K\^ono proves the last theorem by developing $T(x)$ in terms of Rademacher functions and applying the law of the iterated logarithm.
Note that for {\em fixed} $x\in[0,1]$, Theorem \ref{thm:uniform-modulus} implies
\begin{equation}
-1\leq\liminf_{h\to 0} \frac{T(x+h)-T(x)}{h\log_2(1/|h|)}\leq \limsup_{h\to 0} \frac{T(x+h)-T(x)}{h\log_2(1/|h|)}\leq 1.
\label{eq:max-oscillations}
\end{equation}
Within these bounds, various kinds of behavior are possible. Kr\"uppel \cite{Kruppel1} shows that if $x$ is dyadic rational, then
\begin{equation*}
\lim_{h \to 0} \frac{T(x+h)-T(x)}{|h| \log_2(1/|h|)}=1.
\end{equation*}
Allaart and Kawamura \cite{AK2} characterize for which points $x$ the limit
\begin{equation}
\lim_{h \to 0} \frac{T(x+h)-T(x)}{h \log_2(1/|h|)}
\label{eq:two-sided-limit}
\end{equation}
exists. This requires the following definition.

\begin{definition}
{\rm
Let $x\in[0,1]$ be non-dyadic, and let $\{a_n\}$ and $\{b_n\}$ be the (unique) strictly increasing sequences of positive integers such that
\begin{equation*}
x=\sum_{n=1}^\infty 2^{-a_n}, \qquad 1-x=\sum_{n=1}^\infty 2^{-b_n}.
\end{equation*}
We say $x$ is {\em density-regular} if $d_1(x)$ exists and one of the following holds:\vspace{0mm}
\begin{enumerate}[(a)]\setlength{\itemsep}{-1mm}
\item $0<d_1(x)<1$; or
\item $d_1(x)=0$ and $a_{n+1}/a_n\to 1$; or
\item $d_1(x)=1$ and $b_{n+1}/b_n\to 1$.
\end{enumerate}
}
\end{definition}

\begin{theorem}[Allaart and Kawamura 2010]
Let $x$ be non-dyadic. The limit in \eqref{eq:two-sided-limit} exists if and only if $x$ is density-regular, in which case the limit is equal to $d_0(x)-d_1(x)$.
\end{theorem}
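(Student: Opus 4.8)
The plan is to characterize the two one-sided limits separately and recombine them via the symmetry $T(y)=T(1-y)$, which gives
\begin{equation*}
\lim_{h\to 0^-}\frac{T(x+h)-T(x)}{h\log_2(1/|h|)}=-\lim_{h\to 0^+}\frac{T((1-x)+h)-T(1-x)}{h\log_2(1/h)}
\end{equation*}
whenever the right-hand side exists; thus everything reduces to the right-hand limit at a non-dyadic point. The basic tool is the identity obtained by iterating the functional equation $m$ times: if $P_m$ denotes the piecewise-linear interpolant of $T$ at the dyadic rationals of order $m$, then $T(t)=P_m(t)+2^{-m}T(2^mt-\lfloor 2^mt\rfloor)$, so that for $0<h\le 2^{-m}$,
\begin{equation*}
T(x+h)-T(x)=\bigl[P_m(x+h)-P_m(x)\bigr]+2^{-m}\Delta,\qquad |\Delta|\le\tfrac23 .
\end{equation*}
By \eqref{eq:dyadic-secant-slopes} $P_m$ has slope $D_m(x)$ on the order-$m$ interval containing $x$; the segment $[x,x+h]$ meets at most one further such interval, on which a short carry computation shows the slope differs from $D_m(x)$ by $2r_m(x)-2$, where $r_m(x)$ is the length of the terminal run of $1$'s among $\eps_1(x),\dots,\eps_m(x)$. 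Choosing $m=\lfloor\log_2(1/h)\rfloor$ then yields the estimate
\begin{equation*}
\Bigl|\frac{T(x+h)-T(x)}{h\log_2(1/h)}-\frac{D_m(x)}{\log_2(1/h)}\Bigr|\le\frac{2r_m(x)+4}{m}.
\end{equation*}

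From this I would show that the right-hand limit at a non-dyadic $y$ exists if and only if $d_1(y)$ exists and $b_{n+1}/b_n\to1$, where $\{b_n\}$ lists the positions of the $1$'s of $1-y$, and that its value is then $d_0(y)-d_1(y)$. Sufficiency is immediate: when $d_1(y)$ exists, $D_m(y)/m=1-2I_m(y)/m\to d_0(y)-d_1(y)$ and $m/\log_2(1/h)\to1$, while the identity $r_m(y)=m-\max\{b_i:b_i\le m\}$ makes $r_m(y)=o(m)$ equivalent to $b_{n+1}/b_n\to1$, so the right-hand side of the estimate tends to $0$. For necessity argue the contrapositive. If $b_{n+1}/b_n\not\to1$, set $\beta:=\liminf_j b_j/b_{j+1}<1$ and pass to a subsequence along which $b_j/b_{j+1}\to\beta$ and $j/b_j\to\gamma$; with $v^{(j)}$ the dyadic rational of order $b_j$ just above $y$ one has $\log_2(1/(v^{(j)}-y))=b_{j+1}+O(1)$, and expanding through the level-$b_j$ decomposition — which reduces everything to the single asymptotic $T(t)=t\log_2(1/t)+O(t)$ as $t\to0^+$ (immediate from $T(t)=t+\tfrac12T(2t)$; see also \cite{Kruppel1}) together with $D_{b_j}(y)=2j-b_j$ — one finds that the difference quotient tends to $2\gamma\beta-1$ along $h=v^{(j)}-y$ but to $(4\gamma\beta-3\beta+1)/(\beta+1)$ along $h=(v^{(j)}-y)+2^{-(b_j+b_{j+1})/2}$, and a one-line computation shows these coincide only if $\beta=1$. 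If instead $b_{n+1}/b_n\to1$ but $d_1(y)$ fails to exist, then $r_m(y)=o(m)$, so the estimate forces the quotient along $h=2^{-m}$ to stay within $o(1)$ of $1-2I_m(y)/m$, which has two distinct subsequential limits. In both cases the right-hand limit does not exist.

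Applying this at $y=x$ and at $y=1-x$ (whose associated sequence $\{b_n\}$ is $\{a_n\}$) and using the symmetry identity: the right-hand limit at $x$ exists iff $d_1(x)$ exists and $b_{n+1}/b_n\to1$, the left-hand limit at $x$ exists iff $d_1(x)$ exists and $a_{n+1}/a_n\to1$, and in either case the value is $d_0(x)-d_1(x)$, so both sides agree when both exist. Hence the two-sided limit exists exactly when $d_1(x)$ exists and both $a_{n+1}/a_n\to1$ and $b_{n+1}/b_n\to1$; since $0<d_1(x)<1$ forces $a_n\sim n/d_1(x)$ and $b_n\sim n/d_0(x)$ (so both ratio conditions hold automatically), $d_1(x)=0$ forces the $b$-condition, and $d_1(x)=1$ forces the $a$-condition, this is precisely density-regularity, with common value $d_0(x)-d_1(x)$. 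The main difficulty is the first necessity case: one must control the local oscillation of $T$ near the dyadic points $v^{(j)}$ with enough precision and verify that the two probing increments genuinely produce different limiting quotients; the carry bookkeeping behind $r_m$ and the neighbouring slope, and the ``automatic'' density implications, are routine.
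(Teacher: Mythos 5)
The survey states this theorem without proof, referring the reader to \cite{AK2}, so there is no in-paper argument to set yours against; judged on its own terms, your proposal is correct and essentially complete. I checked the key steps. The decomposition $T(t)=T_m(t)+2^{-m}T(2^mt-\lfloor 2^mt\rfloor)$ with $m=\lfloor\log_2(1/h)\rfloor$, the carry computation giving the neighbouring order-$m$ slope $D_m(x)+2r_m(x)-2$, and the resulting bound $(2r_m(x)+4)/m$ all hold, as does the equivalence of $r_m(x)=o(m)$ with $b_{n+1}/b_n\to1$. In the necessity step the two probe limits come out exactly as you claim: $2\gamma\beta-1$ along $h=v^{(j)}-y$ (using $T(s)=s\log_2(1/s)+O(s)$ and $D_{b_j}(y)=2j-b_j$), and $(4\gamma\beta-3\beta+1)/(\beta+1)$ along the second increment (via \eqref{eq:general-dyadic-difference}, the numerator of $v^{(j)}$ having $b_j-j+1$ ones); equating them yields $2(\beta-1)(\gamma\beta+1)=0$, so the two limits agree only when $\beta=1$. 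The recombination through $T(x)=T(1-x)$, together with the observation that each ratio condition is automatic when the corresponding digit density is positive, correctly collapses the conjunction of the two one-sided criteria to density-regularity with common value $d_0(x)-d_1(x)$. What remains is housekeeping rather than substance: take $c_j=\lceil(b_j+b_{j+1})/2\rceil$ so the exponent is an integer; record that $\beta<1$ forces $b_{j+1}-b_j\to\infty$ along the subsequence, which is what makes $2^{b_j}(v^{(j)}-y)\to0$ and $v^{(j)}-y=o(2^{-c_j})$; and pass to a further subsequence so that $j/b_j$ converges. A pleasant feature of your route is that it runs entirely on tools already displayed in this survey --- \eqref{eq:dyadic-secant-slopes}, \eqref{eq:general-dyadic-difference} and the functional equation --- and isolates the single quantity $r_m(x)/m$ as the obstruction to each one-sided limit, which makes transparent where the ratio conditions in the definition of density-regularity come from.
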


One can also consider the probability distribution of $T(x+h)-T(x)$ for small $h$ when $x$ is chosen at random from $[0,1]$. Gamkrelidze \cite{Gamkrelidze} adapts K\^ono's approach to give the following Central Limit Theorem-type result:
\begin{equation*}
\lim_{h\downarrow 0} \lambda\left(\left\{x: \frac{T(x+h)-T(x)}{h\sqrt{\log_2(1/h)}}\leq y\right\}\right)=\frac{1}{\sqrt{2\pi}}\int_{-\infty}^y e^{-t^2/2}dt,
\end{equation*}
where $\lambda$ denotes Lebesgue measure.

\section{Graphical properties} \label{sec:graph}

Figure \ref{fig:Takagi} shows the graph of the Takagi function, restricted to the interval $[0,1]$. A number of features quickly jump out. The graph is symmetric about the line $x=1/2$, and it has cusps and local minima at the dyadic rational points. An important aspect of the graph is that it has two $1/4$-scale copies of itself at its top; this is due to the self-affine nature of $T$, and leads quickly to the fact, observed by many an author, that the absolute maximum of $T$ is attained at uncountably many points. More specifically, we have

\begin{theorem}[Kahane 1959] \label{thm:maxima}
The maximum value of $T$ is $2/3$. The set $\mathcal{M}$ of points where $T$ attains the maximum value is a perfect set of Hausdorff dimension $1/2$, and consists of all the points $x$ with binary expansion satisfying $\eps_{2n-1}+\eps_{2n}=1$ for each $n$.
\end{theorem}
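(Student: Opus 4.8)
The plan is to prove the sharp bound $T(x)\le 2/3$ by grouping the defining series into consecutive pairs of terms, and then to extract both the extremal set $\MM$ and its structure from the equality condition. Throughout write $\{y\}$ for the fractional part of a real number $y$; since $T$ is $1$-periodic we may assume $x\in[0,1)$. The crucial elementary fact is the inequality $\phi(t)+\tfrac12\phi(2t)\le\tfrac12$, valid for all $t\in\RR$, with equality exactly when $\{t\}\in[1/4,3/4]$; as the left side is $1$-periodic, this follows from a short case check on the four subintervals $[0,1/4]$, $[1/4,1/2]$, $[1/2,3/4]$, $[3/4,1]$. Now combining the terms of the series \eqref{eq:Takagi-def} with indices $2m$ and $2m+1$ gives $T(x)=\sum_{m=0}^\infty 4^{-m}\bigl(\phi(\{2^{2m}x\})+\tfrac12\phi(\{2^{2m+1}x\})\bigr)$, and since $\{2^{2m+1}x\}=\{2\{2^{2m}x\}\}$, the $m$-th bracket equals $\phi(t)+\tfrac12\phi(2t)$ with $t=\{2^{2m}x\}$. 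Hence every bracket is at most $1/2$ and $T(x)\le\tfrac12\sum_{m\ge0}4^{-m}=2/3$.

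Next I would analyze equality. Since each bracket is $\le 1/2$, the identity $T(x)=2/3=\sum_{m}4^{-m}\cdot\tfrac12$ forces every bracket to equal $1/2$, i.e.\ $\{2^{2m}x\}\in[1/4,3/4]$ for every $m\ge0$; and conversely this condition gives $T(x)=2/3$. If $x$ is a dyadic rational then $\{2^{2m}x\}=0$ for all large $m$, so the corresponding bracket vanishes and $T(x)<2/3$; thus $\MM$ contains no dyadic rationals. For non-dyadic $x$, whose binary expansion has infinitely many $1$'s under the standing convention, write $\{2^{2m}x\}=0.\eps_{2m+1}\eps_{2m+2}\eps_{2m+3}\dots$ and examine it pair by pair: the pair $(\eps_{2m+1},\eps_{2m+2})=(0,0)$ forces $\{2^{2m}x\}<1/4$ (the borderline value $1/4$ would require an all-$1$'s tail, hence $x$ dyadic), the pair $(1,1)$ forces $\{2^{2m}x\}>3/4$ unless once again $x$ is dyadic, while the pairs $(0,1)$ and $(1,0)$ always give a value in $[1/4,1/2]$ and $[1/2,3/4]$ respectively. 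So $T(x)=2/3$ precisely when $\eps_{2n-1}+\eps_{2n}=1$ for every $n$. This is the asserted description of $\MM$; it also shows that the value $2/3$ is attained (for example at $x=1/3$), and hence that $2/3$ is the maximum of $T$.

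It remains to prove that $\MM$ is a perfect set with $\dim_H\MM=1/2$. Encoding each forced pair $(\eps_{2k-1},\eps_{2k})\in\{(0,1),(1,0)\}$ by a single digit $b_k\in\{1,2\}$ identifies $\MM$ with $\{\sum_{k\ge1}b_k 4^{-k}:b_k\in\{1,2\}\}$, the attractor of the iterated function system $f_1(t)=(t+1)/4$, $f_2(t)=(t+2)/4$. Since $\MM\subseteq[1/3,2/3]$, the images $f_1([1/3,2/3])=[1/3,5/12]$ and $f_2([1/3,2/3])=[7/12,2/3]$ are disjoint, so the system satisfies the (strong) open set condition. Perfectness is then immediate: $\MM$ is the continuous image of the compact space $\{1,2\}^\NN$ and so is closed, while altering the coordinate $b_k$ for large $k$ produces a distinct point of $\MM$ within distance $4^{-k}$, so $\MM$ has no isolated points. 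Covering $\MM$ by the $2^n$ level-$n$ cylinders, each of diameter at most $4^{-n}$, gives $\HH^{1/2}(\MM)\le1$ and hence $\dim_H\MM\le1/2$; the reverse inequality follows from the mass distribution principle applied to the pushforward under the coding map of the uniform Bernoulli measure on $\{1,2\}^\NN$ (the separation of the cylinders shows this measure assigns mass $O(\sqrt{\ell})$ to every interval of length $\ell$), or alternatively from the standard dimension formula $2\cdot(1/4)^s=1$ for self-similar sets under the open set condition.

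The main obstacle, I expect, lies in the bookkeeping of the equality step: the borderline values $1/4$ and $3/4$ of $\{2^{2m}x\}$ are exactly the points at which the binary expansion is ambiguous, so the all-$0$'s and all-$1$'s tails must be handled with care to arrive at the clean digit characterization; once that is done, everything else is routine.
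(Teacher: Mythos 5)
Your proof is correct and follows essentially the same route as the paper: the paired bracket $\phi(t)+\tfrac12\phi(2t)$ is exactly the paper's ``table-top'' function $\phi_1$, and both arguments deduce $T\le\sum_m 4^{-m}\cdot\tfrac12=2/3$ with equality forced on the middle half of each quaternary interval, yielding the digit condition $\eps_{2n-1}+\eps_{2n}=1$ and the Cantor set of dimension $\log 2/\log 4=1/2$. You simply supply more detail than the paper does on the borderline dyadic cases and on the lower bound for the Hausdorff dimension.
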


\begin{proof}
The simplest way to see this is to rewrite \eqref{eq:Takagi-def} as
\begin{equation*}
T(x)=\sum_{n=0}^\infty \frac{1}{4^n}\phi_1(4^n x),
\end{equation*}
where $\phi_1$ is the ``table-top" function $\phi_1(x)=\phi(x)+(1/2)\phi(2x)$. Let
\begin{equation}
T_n(x):=\sum_{k=0}^{n-1}\frac{1}{2^k}\phi(2^k x),
\label{eq:partial-Takagi}
\end{equation}
and note that $T_{2n}(x)=\sum_{k=0}^{n-1}4^{-k}\phi_1(4^k x)$, for $n\in\NN$. Figure~\ref{fig:table-top} shows the graphs of $T_2$ and $T_4$. One sees by induction that the maximum value of $T_{2n}$ is
\begin{equation*}
\frac12+\frac12\cdot\frac14+\dots+\frac12\left(\frac14\right)^{n-1},
\end{equation*}
and hence,
\begin{equation*}
M:=\max\{T(x): x\in[0,1]\} =\sum_{k=0}^\infty\frac12\left(\frac14\right)^k=\frac23.
\end{equation*}
If $x\in[0,1]$, then $T(x)$ achieves this maximum value of $2/3$ if and only if $x$ lies in the middle half of each quarternary interval to which it belongs -- in other words, if the quarternary expansion of $x$ contains only 1's and 2's. In terms of the binary expansion $x=\sum_{n=0}^\infty \eps_n/2^n$ of $x$, it means that $\eps_{2n-1}+\eps_{2n}=1$ for each $n$. Thus, the set $\MM:=\{x\in[0,1]: T(x)=M\}$ is a Cantor-like set constructed by removing at each step the two outside fourths of each remaining quarternary interval. As a result, $\dim_H \MM=\log 2/\log 4=1/2$.
\end{proof}

(Kahane did not show that the dimension of $\mathcal{M}$ is $1/2$, but he easily could have: the technique for calculating the dimensions of generalized Cantor sets was by 1959 well established.)

\begin{figure}
\begin{center}
\epsfig{file=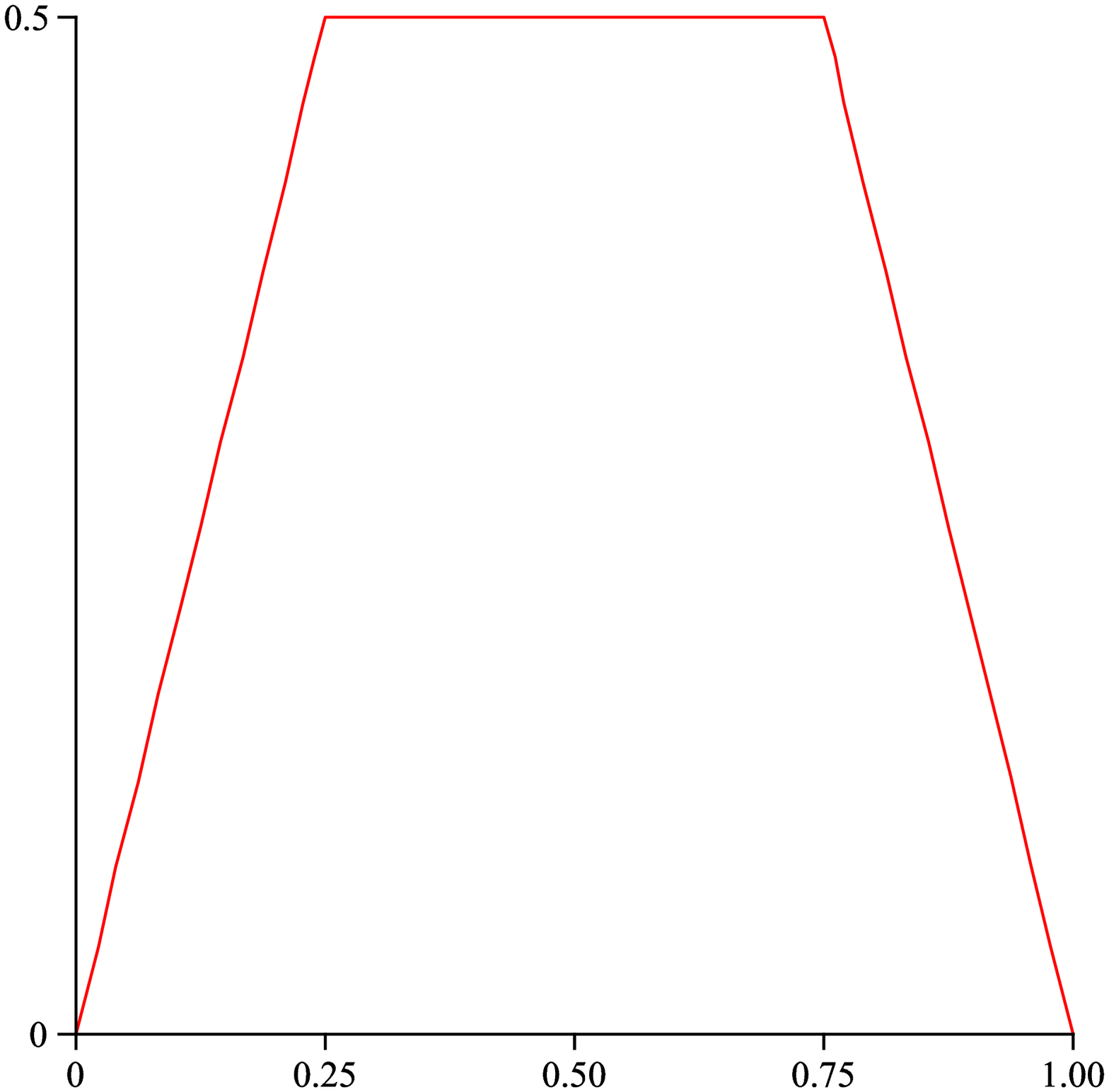, height=.2\textheight, width=.4\textwidth}
\qquad
\epsfig{file=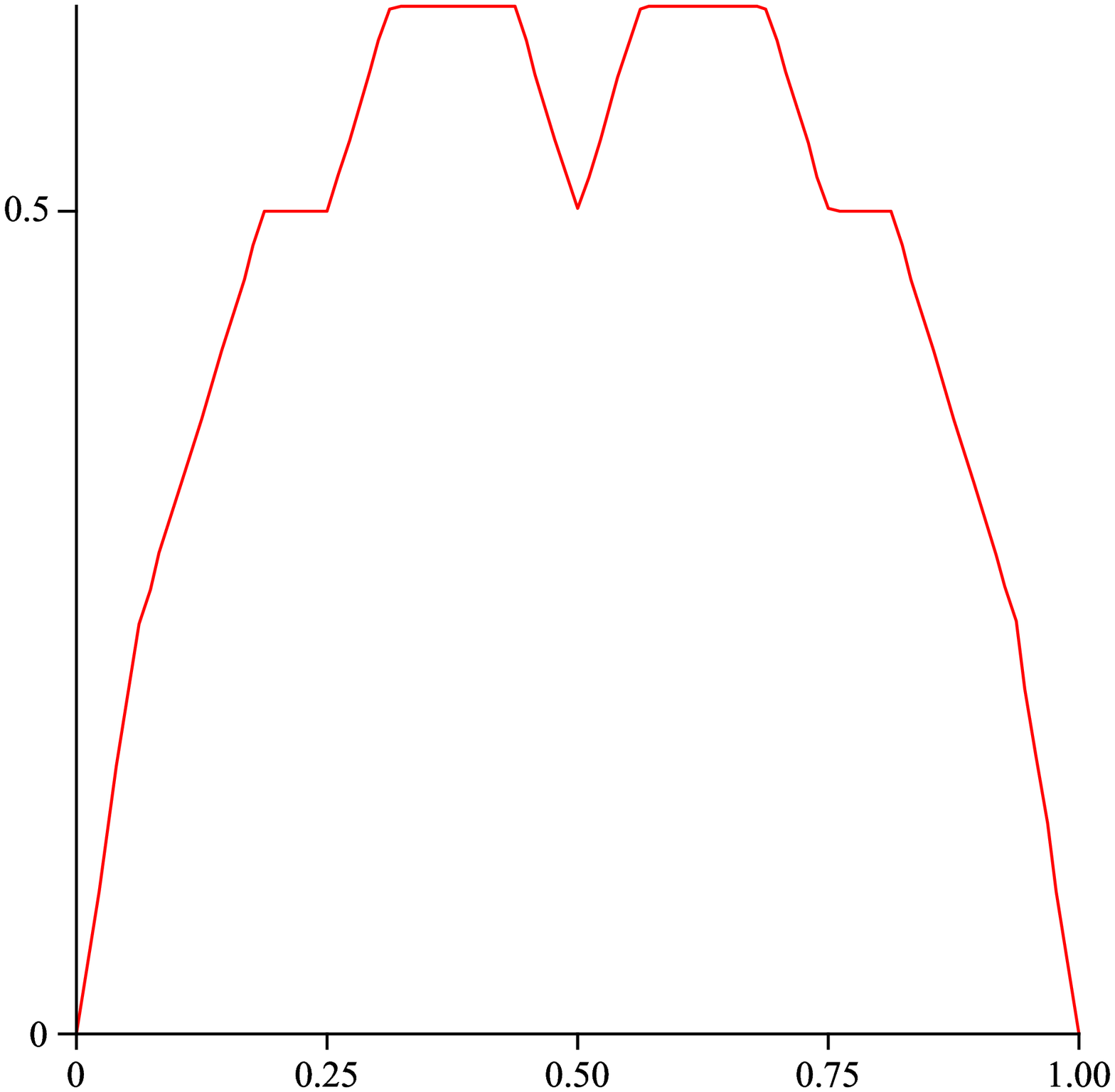, height=.25\textheight, width=.4\textwidth}
\caption{The functions $T_2(x)=\phi_1(x)$ (left) and $T_4(x)=\phi_1(x)+(1/4)\phi_1(4x)$ (right)}
\label{fig:table-top}
\end{center}
\end{figure}

\subsection{Humps and local extrema} \label{subsec:humps}

The appearance of smaller-scale similar copies of the graph of $T$ is not limited to the central part of the graph; it happens everywhere. We introduce two definitions and a lemma to make this precise. Let
\begin{equation*}
\mathcal{G}_T:=\{(x,T(x)): 0\leq x\leq 1\}
\end{equation*}
denote the graph of $T$ over the unit interval $[0,1]$.
The term `balanced' in the following definition is taken from Lagarias and Maddock \cite{LagMad1}.

\begin{definition}
{\rm
A dyadic rational of the form $x=0.\eps_1\eps_2\dots\eps_{2m}$ is called {\em balanced} if $D_{2m}(x)=0$. If there are exactly $n$ indices $1\leq j\leq 2m$ such that $D_j(x)=0$, we say $x$ is a balanced dyadic rational of {\em generation} $n$. By convention, we consider $x=0$ to be a balanced dyadic rational of generation $0$. The set of all balanced dyadic rationals is denoted by $\BB$. For each $n\in\ZZ_+$, the set of balanced dyadic rationals of generation $n$ is denoted by $\BB_n$. Thus, $\BB=\bigcup_{n=0}^\infty \BB_n$.
}
\end{definition}

\begin{lemma} \label{lem:similar-copies}
Let $m\in\NN$, and let $x_0=k/2^{2m}=0.\eps_1\eps_2\dots\eps_{2m}$ be a balanced dyadic rational. Then for $x\in[k/2^{2m},(k+1)/2^{2m}]$ we have
\begin{equation*}
T(x)=T(x_0)+\frac{1}{2^{2m}}T\left(2^{2m}(x-x_0)\right).
\end{equation*}
In other words, the part of the graph of $T$ above the interval $[k/2^{2m},(k+1)/2^{2m}]$ is a similar copy of the full graph $\mathcal{G}_T$, reduced by a factor $1/2^{2m}$ and shifted up by $T(x_0)$.
\end{lemma}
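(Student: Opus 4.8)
The plan is to split the defining series \eqref{eq:Takagi-def} at level $2m$. Write $N=2m$, and for $x\in[x_0,x_0+2^{-N}]$ set $t=2^N(x-x_0)\in[0,1]$. I would decompose $T=A+B$ on this interval, where $A(x)=\sum_{\ell=0}^{N-1}2^{-\ell}\phi(2^\ell x)=T_N(x)$ in the notation of \eqref{eq:partial-Takagi}, and $B(x)=\sum_{\ell=N}^{\infty}2^{-\ell}\phi(2^\ell x)$ is the tail, and then show separately that $B(x)=2^{-N}T(t)$ and that $A$ is constant on the interval, equal to $T(x_0)$.

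For the tail, the point is that since $x_0=k/2^N$, for every $\ell\ge N$ the number $2^\ell x_0=k\,2^{\ell-N}$ is a nonnegative integer, so $1$-periodicity of $\phi$ gives $\phi(2^\ell x)=\phi(2^\ell x_0+2^{\ell-N}t)=\phi(2^{\ell-N}t)$; reindexing by $j=\ell-N$ then yields $B(x)=2^{-N}\sum_{j\ge0}2^{-j}\phi(2^j t)=2^{-N}T\big(2^N(x-x_0)\big)$. In particular $B$ vanishes at the two dyadic endpoints $x_0$ and $x_0+2^{-N}$, so $A$ agrees with $T$ there. For the head, I would use that each dyadic interval of order $\ge1$ lies inside a single interval $[i/2,(i+1)/2]$ on which $\phi$ is affine, while as $x$ ranges over $[x_0,x_0+2^{-N}]$ the argument $2^\ell x$ ranges over the dyadic interval $[k/2^{N-\ell},(k+1)/2^{N-\ell}]$ of order $N-\ell\ge1$ for each $0\le\ell\le N-1$; hence every summand of $A$, and so $A$ itself, is affine on the whole interval $[x_0,x_0+2^{-N}]$. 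Since $A$ matches $T$ at the endpoints, its slope equals $2^N\big(T(x_0+2^{-N})-T(x_0)\big)=D_{2m}(x_0)$ by \eqref{eq:dyadic-secant-slopes}, which is $0$ because $x_0$ is balanced. Therefore $A\equiv T(x_0)$ on the interval, and adding the two pieces gives the formula.

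I do not expect a serious obstacle; the lemma is essentially a bookkeeping consequence of Billingsley's computation. The one thing to be careful about is that $A$ is genuinely affine across the entire interval $[k/2^{2m},(k+1)/2^{2m}]$ rather than merely piecewise affine — this is exactly why the interval has length $2^{-2m}$ with $x_0$ a dyadic rational of the same order — and that the slope of $A$ collapses to $D_{2m}(x_0)$, which vanishes precisely under the hypothesis that $x_0$ is balanced.
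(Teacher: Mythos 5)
Your proof is correct and is essentially the paper's own argument, written out in full: the paper's one-line proof likewise rests on the facts that the partial sum $T_{2m}$ is affine on $[k/2^{2m},(k+1)/2^{2m}]$ with slope $D_{2m}(x_0)=0$, that $T(x_0)=T_{2m}(x_0)$, and that the tail of the series rescales to a copy of $T$. The extra care you take in verifying that $T_{2m}$ is genuinely affine (not merely piecewise affine) across the whole interval is exactly the point the paper leaves implicit.
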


\begin{proof}
This follows immediately from the definition \eqref{eq:Takagi-def}, since the slope of $T_{2m}$ over the interval $[k/2^{2m},(k+1)/2^{2m}]$ is equal to $D_{2m}(x_0)=0$, and $T(x_0)=T_{2m}(x_0)$.
\end{proof}

\begin{definition} \label{def:humps}
{\rm
For a balanced dyadic rational $x_0=k/2^{2m}$ as in Lemma \ref{lem:similar-copies}, let $H(x_0)$ denote the portion of the graph of $T$ restricted to the interval $[k/2^{2m},(k+1)/2^{2m}]$. By Lemma \ref{lem:similar-copies}, $H(x_0)$ is a similar copy of the full graph $\mathcal{G}_T$; we call it a {\em hump}. Its height is $\frac23{(\frac14)}^m$, and we call $m$ its {\em order}. By the {\em generation} of the hump $H(x_0)$ we mean the generation of the balanced dyadic rational $x_0$. A hump of generation $1$ will be called a {\em first-generation hump}. By convention, the graph $\mathcal{G}_T$ itself is a hump of generation $0$. If $D_j(x_0)\geq 0$ for every $j\leq 2m$, we call $H(x_0)$ a {\em leading hump}. See Figure \ref{fig:humps} for an illustration of these concepts.
}
\end{definition}

\begin{figure}
\begin{center}
\epsfig{file=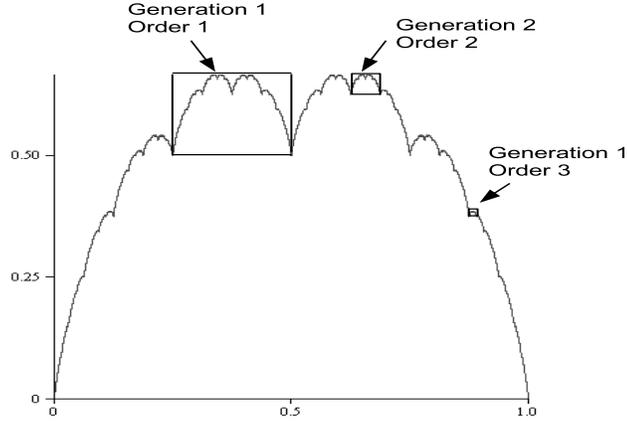, height=.35\textheight, width=.7\textwidth}
\caption{The ``humps" $H(1/4)$, $H(5/8)$ and $H(7/8)$, enclosed in rectangles from left to right. Note that in binary, $1/4=0.01$, $5/8=0.1010$, and $7/8=0.111000$. Only the first of these, $H(1/4)$, is a leading hump.}
\label{fig:humps}
\end{center}
\end{figure}

The Takagi function $T$ takes on a local maximum value at a point $x=\sum_{n=0}^\infty \eps_n/2^n$ precisely when the point $(x,T(x))$ is located at the top of some hump. This is the case if and only if for some $m\in\NN$, 
\begin{equation*}
\eps_1+\dots+\eps_{2m}=m, \quad\mbox{and}\quad \eps_{2n-1}+\eps_{2n}=1\quad \mbox{for each $n>m$}.
\end{equation*}
The first part of the above condition ensures that $(x,T(x))$ lies on a hump of order $m$; the second part implies that it lies at the top of that hump. In particular, the points of local maximum of $T$ lie dense in $[0,1]$. This result too is due to Kahane \cite{Kahane}. On the other hand, since $T_+'(x)=\infty$ and $T_-'(x)=-\infty$ at each dyadic $x$, $T$ has a local minimum value at every dyadic rational point $x$. Kahane shows that there are no local minima at non-dyadic points.

\subsection{Humps and Hausdorff measure} \label{subsec:Hausdorff}

It is often necessary to count the humps of a given order and/or generation, and this counting involves the {\em Catalan numbers}
\begin{equation*}
C_n:=\frac{1}{n+1}\binom{2n}{n}, \qquad n=0,1,2,\dots.
\end{equation*}

\begin{lemma} \label{lem:hump-count} 
Let $m\in\NN$.

(i) There are $\binom{2m}{m}$ humps of order $m$.

(ii) There are $C_m$ leading humps of order $m$.

(iii) There are $2C_{m-1}$ first-generation humps of order $m$.
\end{lemma}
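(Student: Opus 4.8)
The plan is to encode each hump by the finite binary string $\eps_1\eps_2\dots\eps_{2m}$ of its defining balanced dyadic rational $x_0=0.\eps_1\dots\eps_{2m}$, and to count such strings subject to the appropriate constraints on the partial sums $D_j(x_0)=\sum_{k=1}^j(1-2\eps_k)$. Recall from Lemma \ref{lem:similar-copies} and Definition \ref{def:humps} that a hump of order $m$ corresponds exactly to a string of length $2m$ with $D_{2m}=0$ (equivalently, exactly $m$ ones and $m$ zeros). So for part (i), I would simply observe that choosing which $m$ of the $2m$ positions carry a one gives $\binom{2m}{m}$ such strings, hence $\binom{2m}{m}$ humps of order $m$. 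One should briefly check that distinct strings of length $2m$ give distinct humps; since the representation ending in all zeros is fixed, distinct strings give distinct dyadic rationals $x_0$, and hence disjoint base intervals, so the humps are distinct.

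For part (ii), recall that a hump is \emph{leading} if $D_j(x_0)\ge 0$ for all $j\le 2m$. Writing each digit $\eps_k$ as a step $+1$ (when $\eps_k=0$) or $-1$ (when $\eps_k=1$) in the walk $D_j$, a leading hump of order $m$ is precisely a walk of $2m$ steps that starts at $0$, ends at $0$, and never goes strictly below $0$ — i.e. a Dyck path of semilength $m$. The number of these is the Catalan number $C_m$, which is the standard interpretation; I would invoke this directly, or alternatively give the one-line reflection-principle count $\binom{2m}{m}-\binom{2m}{m-1}=C_m$. Again the injectivity remark from part (i) carries over, so there are exactly $C_m$ leading humps of order $m$.

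For part (iii), a first-generation hump of order $m$ is, by Definition \ref{def:humps}, a hump $H(x_0)$ with $x_0\in\BB_1$, meaning there is exactly one index $j\le 2m$ with $D_j(x_0)=0$ — and since $D_{2m}(x_0)=0$ always holds, that unique index must be $j=2m$. In walk language: the walk of length $2m$ returns to $0$ at time $2m$ and at no earlier time. Such a walk stays strictly positive on $(0,2m)$ or strictly negative on $(0,2m)$; the strictly positive ones are Dyck paths that first return to $0$ only at the end, and it is classical that there are $C_{m-1}$ of these (the first step is forced to $+1$, the last to $-1$, and the middle $2m-2$ steps form an unrestricted Dyck path of semilength $m-1$). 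By the up-down symmetry $\eps_k\leftrightarrow 1-\eps_k$ (equivalently $D_j\leftrightarrow -D_j$), the strictly negative walks are equinumerous, giving $2C_{m-1}$ in total. The main thing to be careful about is the boundary reasoning that the ``exactly $n$ indices with $D_j=0$'' condition forces the first return to be at time $2m$; once that is pinned down, the counts are the textbook Catalan facts and the argument is routine.
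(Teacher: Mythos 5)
Your proof is correct. Note that the paper states Lemma \ref{lem:hump-count} without any proof at all, so there is nothing to compare against; your lattice-path encoding (digits as $\pm 1$ steps of the walk $D_j$, balanced strings of length $2m$ as bridges counted by $\binom{2m}{m}$, leading humps as Dyck paths counted by $C_m$, and first-generation humps as first-return paths counted by $2C_{m-1}$ via the positive/negative symmetry) is precisely the standard Catalan-number argument the authors implicitly rely on when they introduce $C_n$ just before the statement. Your care on the two boundary points -- that distinct length-$2m$ strings give distinct base intervals and hence distinct humps, and that generation $1$ forces the unique zero of $D_j$ to occur at $j=2m$ so the walk has no earlier return -- is exactly what is needed to make the textbook counts apply.
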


This lemma is extremely helpful in the study of the level sets of $T$ (see Section \ref{sec:level-sets}). Another use is the following.
Mauldin and Williams \cite{Mauldin-Williams} first showed that the graph of $T$ has Hausdorff dimension one, but remarked that they did not know whether it has $\sigma$-finite linear measure. Anderson and Pitt \cite{Anderson} showed that the answer is affirmative, not only for the Takagi function but for a much wider class of functions (the so-called Zygmund space $\Lambda_d^*$). Odani \cite{Odani} explicitly decomposed the graph of $T$ into countably many sets of finite linear measure, as follows.

Let $S$ denote the set of points $(x,y)$ on $\GG_T$ which belong to humps of infinitely many generations, and for $n=0,1,2,\dots$, let $E_n$ denote the set of points which belong to a hump of generation $n$, but not to a hump of generation $n+1$. Then
\begin{equation*}
\GG_T=S\cup E_0\cup E_1\cup E_2\cup\dots.
\end{equation*}
Since $E_0$ is the graph of $T$ with all the first-generation humps removed, it is intuitively clear (and can be made rigorous) that the restriction of $T$ to $\pi_X(E_0)$ is monotone increasing on $[0,1/2]$, and monotone decreasing on $[1/2,1]$. Hence, $E_0$ has finite linear measure. Next, $E_1$ consists of countably many copies of $E_0$, one inside each first-generation hump. For each $m\in\NN$, there are $2C_{m-1}$ copies with contraction ratio $1/4^m$ by Lemma \ref{lem:hump-count}. Thus,
\begin{equation*}
\HH^1(E_1)=\sum_{m=1}^\infty2C_{m-1}\left(\frac14\right)^m\HH^1(E_0)=\frac12\sum_{n=0}^\infty C_n \left(\frac14\right)^n\HH^1(E_0)=\HH^1(E_0),
\end{equation*}
where we have used the well-known fact that $\sum_{n=0}^\infty C_n (1/4)^n=2$. Inductively, this argument can be continued to show that $\HH^1(E_n)<\infty$ for each $n$. It remains to verify that $\HH^1(S)<\infty$. Order the first-generation humps $H_1,H_2,\dots$ in some arbitrary manner, and for $i\in\NN$, let $\Phi_i$ be the similarity map which maps $\GG_T$ onto $H_i$. Then it is easy to check that
\begin{equation*}
S=\bigcup_{i=1}^\infty \Phi_i(S).
\end{equation*}
The open set condition is satisfied (take $(0,1)\times\RR$, say). As above, the contraction ratios of the $\Phi_i$ sum to 1, so Moran's equation gives $\dim_H S=1$. An easy exercise (it is clear which coverings to use) shows that $\HH^1(S)<\infty$. Thus, the graph of $T$ has $\sigma$-finite linear measure. (In fact, $\HH^1(S)>0$ as well, since $\pi_X(S)$ has full Lebesgue measure.)

Essentially the same construction is given by Buczolich \cite[Theorem 9]{Buczolich2}. He shows additionally that the set $S$ is an ``irregular" 1-set, meaning that it intersects every continuously differentiable curve in a set of $\HH^1$-measure zero. In \cite[Theorem 10]{Buczolich1}, Buczolich shows also that the Takagi function is ``micro self-similar", in the sense that the graph of $T$ itself is a micro tangent set of $T$ at almost every point $x\in[0,1]$.

\section{Alternative representations of $T(x)$} \label{sec:expressions}

While \eqref{eq:Takagi-def} is arguably the simplest and certainly the most common expression for $T(x)$, many other representations occur in the literature, and most have some unique advantage in proving certain things about the Takagi function or its generalizations. 

\bigskip
1. {\em Dynamical systems view}. To begin, put $\psi(x):=2\phi(x)$ for $x\in[0,1]$, and note that $\psi$ is a special case of a ``tent map", which maps $[0,1]$ onto itself. It is easy to see that we can write \eqref{eq:Takagi-def} as
\begin{equation}
T(x)=\sum_{n=1}^\infty \frac{1}{2^n}\psi^{(n)}(x),	\qquad x\in[0,1],
\label{eq:dynamics-def}
\end{equation}
where $\psi^{(n)}$ denotes $n$-fold iteration of $\psi$. Since $\sum_{n=1}^\infty 1/2^n=1$, \eqref{eq:dynamics-def} represents $T(x)$ as a weighted average of the iterates of $x$ under the chaotic dynamical system $\psi$.

\bigskip
2. {\em Takagi's definition}. Most authors define the Takagi function by either \eqref{eq:Takagi-def} or \eqref{eq:dynamics-def}, but it should be pointed out that Takagi himself defined $T(x)$ differently. For $n\in\NN$, let $a_n$ denote the number of binary digits among $\{\eps_1,\dots,\eps_{n-1}\}$ that are different from $\eps_n$. In other words, $a_n=O_n(x)$ if $\eps_n=1$, and $a_n=I_n(x)$ if $\eps_n=0$. Takagi defined $T(x)$ by
\begin{equation}
T(x)=\sum_{n=1}^\infty \frac{a_n}{2^n}.
\label{eq:Takagi-original-definition}
\end{equation}
To see that \eqref{eq:Takagi-def} and \eqref{eq:Takagi-original-definition} are equivalent, express $\phi(x)$ in terms of the binary expansion of $x$ by
\begin{equation*}
\phi(x)=\sum_{k=1}^\infty \frac{\eps_k(1-\eps_1)+(1-\eps_k)\eps_1}{2^k}. \end{equation*}
This generalizes to
\begin{equation}
\frac{\phi(2^n x)}{2^n}=\sum_{j=1}^\infty \frac{\eps_{n+j}(1-\eps_{n+1})+(1-\eps_{n+j})\eps_{n+1}}{2^{n+j}}.
\label{eq:phi-in-binary}
\end{equation}
We can similarly write $a_n=\eps_n O_n(x)+(1-\eps_n)I_n(x)$. Inserting \eqref{eq:phi-in-binary} into \eqref{eq:Takagi-def} and interchanging summations it is now easy to obtain \eqref{eq:Takagi-original-definition}. (This is essentially the proof given by Lagarias and Maddock \cite[Lemma 2.1]{LagMad1}.)

K\^ono \cite{Kono}, and later Gamkrelidze \cite{Gamkrelidze}, used a form similar to \eqref{eq:Takagi-original-definition} (expressing $a_n$ in terms of the Rademacher functions $X_n(x)=(-1)^{\eps_n}$) to investigate probabilistic properties of the graph of $T$.

\bigskip
3. {\em Tambs-Lyche's definition}. In 1939, Tambs-Lyche \cite{Tambs-Lyche} gave the following expression for $T(x)$. Write
\begin{equation*}
x=\sum_{j=1}^\infty 2^{-l_j},
\end{equation*}
where $\{l_j\}$ is a strictly increasing sequence of integers (the sum being finite if $x$ is dyadic). Then
\begin{equation}
T(x)=\sum_{j=1}^\infty\frac{l_j-2(j-1)}{2^{l_j}}.
\label{eq:Tambs-Lyche}
\end{equation}
This formula is useful for approximating solutions to the equation $T(x)=y$, as shown in \cite[Section 4.2]{Allaart5}.

Tambs-Lyche actually {\em defined} his function by the summation in the right hand side of \eqref{eq:Tambs-Lyche}, and stated without proof that it is equivalent to \eqref{eq:Takagi-def}. Tambs-Lyche's formula too has been rediscovered many times. In her thesis and subsequent publication \cite{Kawamura1}, Kawamura used a relationship with Lebesgue's singular function (explained in the next section) to obtain the expression
\begin{equation}
T(x)=\sum_{n=1}^\infty\eps_n(x)\frac{O_n(x)-I_n(x)+2}{2^n} =\sum_{n=1}^\infty\eps_n(x)\frac{n-2(I_n(x)-1)}{2^n},
\label{eq:Kawamura-form}
\end{equation}
which is clearly equivalent to \eqref{eq:Tambs-Lyche}. De Amo and Fern\'andez-S\'anchez \cite{deAmo} derive \eqref{eq:Tambs-Lyche} explicitly from \eqref{eq:Takagi-def}, while Kuroda \cite{Kuroda} deduces it directly from Takagi's definition \eqref{eq:Takagi-original-definition}.
Here we give a short proof using \eqref{eq:Takagi-def}. Recall the definition of $T_n$ from \eqref{eq:partial-Takagi}, and note that $T_n$ is piecewise linear with slope $D_n(x)$ at all points not of the form $j/2^n$. Moreover, $T(j/2^n)=T_n(j/2^n)$. Thus, if $0<l\leq m$ and $j\in\ZZ_+$, we have
\begin{equation}
T\left(\frac{j}{2^l}+\frac{1}{2^m}\right)-T\left(\frac{j}{2^l}\right)=\frac{1}{2^m}D_m\left(\frac{j}{2^l}\right)=\frac{m-2s_j}{2^m},
\label{eq:general-dyadic-difference}
\end{equation}
where $s_j$ is the number of $1$'s in the binary expansion of $j$.
This immediately gives $T(1/2^m)=m/2^m$, and a straightforward induction argument yields
\begin{equation*}
T\left(\sum_{j=1}^n 2^{-l_j}\right)=\sum_{j=1}^n \frac{l_j-2(j-1)}{2^{l_j}}
\end{equation*}
for all $n\in\NN$ and integers $1\leq l_1<l_2<\dots<l_n$. The continuity of $T$ gives \eqref{eq:Tambs-Lyche}.

As a by-product of \eqref{eq:general-dyadic-difference} (putting $l=m$ and summing over $j$), we obtain the formula given by Kr\"uppel \cite{Kruppel1}:
\begin{equation}
T\left(\frac{k}{2^m}\right)=\frac{1}{2^m}\sum_{j=0}^{k-1}(m-2s_j),
\label{eq:Kruppel}
\end{equation}
which was used in Section \ref{subsec:derivatives}.

\bigskip
4. {\em Random walk definition}. Lagarias and Maddock \cite[Section 2]{LagMad1} use \eqref{eq:Takagi-original-definition} to express $T(x)$ in terms of the sequence $\{D_n(x)\}$ as follows:
\begin{equation}
T(x)=\frac12-\frac14\sum_{n=1}^\infty(-1)^{\eps_{n+1}}\frac{D_n(x)}{2^n}.
\label{eq:D-expression}
\end{equation}
This formula is useful in the study of level sets, because one easily infers from it the following important fact.

\begin{lemma} \label{lem:invariance}
If $|D_n(x)|=|D_n(x')|$ for every $n$, then $T(x)=T(x')$.
\end{lemma}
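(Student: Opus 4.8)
The plan is to argue directly from the ``random walk'' representation \eqref{eq:D-expression}, by showing that each summand $(-1)^{\eps_{n+1}(x)}D_n(x)/2^n$ in fact depends only on the sequence of absolute values $|D_n(x)|$; the hypothesis then forces the two series for $T(x)$ and $T(x')$ to agree term by term.

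First I would record the trivial recursion $D_{n+1}(x)=D_n(x)+(-1)^{\eps_{n+1}(x)}$, which is immediate from $D_n=\sum_{k=1}^n(-1)^{\eps_k}$. Squaring both sides and using $\bigl((-1)^{\eps_{n+1}}\bigr)^2=1$ gives $D_{n+1}(x)^2=D_n(x)^2+2(-1)^{\eps_{n+1}(x)}D_n(x)+1$, hence
\begin{equation*}
(-1)^{\eps_{n+1}(x)}D_n(x)=\tfrac12\bigl(D_{n+1}(x)^2-D_n(x)^2-1\bigr).
\end{equation*}
The right-hand side is manifestly a function of $D_n(x)^2$ and $D_{n+1}(x)^2$ alone, i.e.\ of $|D_n(x)|$ and $|D_{n+1}(x)|$. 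Therefore, if $|D_n(x)|=|D_n(x')|$ for every $n$, then $D_n(x)^2=D_n(x')^2$ for every $n$, and the displayed identity yields $(-1)^{\eps_{n+1}(x)}D_n(x)=(-1)^{\eps_{n+1}(x')}D_n(x')$ for every $n$. Substituting into \eqref{eq:D-expression},
\begin{equation*}
T(x)=\frac12-\frac14\sum_{n=1}^\infty(-1)^{\eps_{n+1}(x)}\frac{D_n(x)}{2^n}=\frac12-\frac14\sum_{n=1}^\infty(-1)^{\eps_{n+1}(x')}\frac{D_n(x')}{2^n}=T(x'),
\end{equation*}
which is the assertion. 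Since $|D_n|\le n$, both series converge absolutely, so the term-by-term comparison is legitimate.

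There is essentially no obstacle here beyond spotting the right algebraic identity (writing the troublesome term $(-1)^{\eps_{n+1}}D_n$ in terms of $D_{n+1}^2-D_n^2$); everything else is routine. The only point meriting a word of care is that \eqref{eq:D-expression} must be read with the standing convention that dyadic rationals are expanded with a tail of zeros, but that is precisely the convention under which \eqref{eq:D-expression} was derived, so it applies uniformly to $x$ and $x'$ whether or not either of them is dyadic.
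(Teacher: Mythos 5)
Your proof is correct and follows the same route the paper intends: the paper simply asserts that the lemma is ``easily inferred'' from \eqref{eq:D-expression}, and your identity $(-1)^{\eps_{n+1}}D_n=\tfrac12\bigl(D_{n+1}^2-D_n^2-1\bigr)$ is exactly the observation that makes each summand a function of $|D_n|$ and $|D_{n+1}|$ alone. Nothing is missing.
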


Note that $\{D_n(x)\}_n$ is a symmetric simple random walk when $x$ is chosen at random in $[0,1]$. Thus, \eqref{eq:D-expression} expresses $T$ as a functional of a random walk.

\bigskip
5. {\em Fourier series}. While the definition \eqref{eq:Takagi-def} gives $T(x)$ directly as a Schauder series, it is also relatively easy to compute the Fourier series for $T(x)$. Hata and Yamaguti \cite{Hata-Yamaguti} point out that
\begin{equation*}
\phi(x)=\frac14-\frac{2}{\pi^2}\sum_{k\in\NN,\ k\ \mathrm{odd}}\frac{\cos 2\pi kx}{k^2},
\end{equation*}
and this results in the Fourier series
\begin{equation*}
T(x)=\frac12-\frac{2}{\pi^2}\sum_{m=1}^\infty A_m\cos 2\pi mx,
\end{equation*}
where $A_m=(2^n k^2)^{-1}$ if $m=2^n k$, with $k$ odd. The Fourier coefficients $A_m$ satisfy $1/m^2\leq A_m\leq 1/m$. In particular, the Fourier series of $T(x)$ is non-lacunary, in contrast to the Weierstrass function which is defined as a lacunary Fourier series.

\section{Functional and difference equations} \label{sec:FE}

De Rham~\cite{deRham1} was the first to point out that the Takagi function on $[0,1]$ satisfies the functional equation
\begin{equation} \label{eq:takagi FA}
       f(x) =
        \begin{cases}
                (1/2) f(2x)+ x,
                       & \qquad 0 \leq x \leq 1/2, \\
                (1/2) f(2x-1)+(1-x),
                        & \qquad 1/2 \leq x \leq 1.
        \end{cases}
\end{equation}
Kairies, Darsow and Frank~\cite{KaiDarFra} observed \eqref{eq:takagi FA} and proved the following results:
\begin{enumerate}\setlength{\itemsep}{0mm}
\item Any function $f:[0,1]\to\RR$ satisfying \eqref{eq:takagi FA} is nowhere differentiable, and coincides with $T$ on the dyadic rationals.
\item If $f:[0,1]\to\RR$ satisfies \eqref{eq:takagi FA}  and is bounded, then $f=T$.
\item A recursive relation for the moments $M_n=\int_0^1 x^n f(x)\,dx$ of any function $f$ satisfying \eqref{eq:takagi FA} is given by 
\begin{equation*}
M_0=1/2, \qquad M_n=\frac{1}{(n+1)(n+2)}+\frac{1}{2(2^{n+1}-1)}\sum_{k=0}^{n-1}\binom{n}{k} M_k.
\end{equation*} 
\end{enumerate}

(The paper \cite{KaiDarFra} was inadvertently printed before the page proofs were received; a list of corrections is given in \cite{Darsow}.) Later, extending the results of \cite{KaiDarFra}, Kairies~\cite{Kairies} gave a list of seven functional equations satisfied by $T(x)$, and investigated which subsets of these equations imply that a bounded function $f$ satisfying them must in fact be the Takagi function.

\bigskip

In 1984, Hata and Yamaguti~\cite{Hata-Yamaguti} started a new direction by regarding the Takagi function and related functions as solutions of discrete boundary value problems. This was quite natural, since \eqref{eq:Takagi-def} gives $T(x)$ directly as a Schauder series, and from the Schauder expansion of a function one quickly obtains an infinite system of difference equations which the function satisfies. Using these difference equations, Hata and Yamaguti showed that the Takagi function is closely related to another special function: Lebesgue's singular function. Kawamura~\cite{Kawamura1} later adopted their approach and found a close relationship between other nowhere differentiable functions, singular functions, and self-similar sets in the plane. 

First, we briefly recall Schauder expansions. Whereas a function's Fourier expansion uses trigometric functions, the Schauder expansion uses the ``tent" functions
\begin{equation*}
S_{n,i}(x)=\begin{cases}
2\phi(2^n x), & \mbox{if}\ \frac{i}{2^n}\leq x\leq\frac{i+1}{2^n}\\
0, & \mbox{otherwise},
\end{cases}
\end{equation*}
for $0\leq i\leq 2^n-1$ and $n\in\ZZ_+$. Thus, the graph of $S_{n,i}$ is the regular isosceles triangle of unit height whose base is the interval $[i/2^n,(i+1)/2^n]$. 

It is well known that every continuous function $f:[0,1]\to\RR$ which vanishes at $0$ and $1$ has a unique Schauder expansion of the form 
\begin{equation}
f(x)=\sum_{n=0}^{\infty}\sum_{i=0}^{2^n-1}a_{n,i}S_{n,i}(x),
\label{eq:Schauder-expansion}
\end{equation}
where
\begin{equation*}
a_{n,i}=f\left( \frac{2i+1}{2^{n+1}} \right)-\frac12 \left\{ f \left( \frac{i}{2^n} \right)+ f \left( \frac{i+1}{2^n} \right) \right\}.
\end{equation*}
Applying this to the Takagi function, we immediately obtain
 
\begin{theorem}[Hata-Yamaguti, 1983]
The Takagi function $T(x)$ is the unique continuous solution of the discrete boundary value problem
\begin{equation} \label{eq:DF-takagi}
 T\left( \frac{2i+1}{2^{n+1}} \right)-\frac12 \left\{ T \left( \frac{i}{2^n} \right)+ T \left( \frac{i+1}{2^n} \right) \right\}
 =\frac{1}{2^{n+1}},
\end{equation}
where $0 \le i\le 2^n-1$, $n\in \ZZ_+$, and the boundary conditions are $T(0)=T(1)=0$. 
\end{theorem}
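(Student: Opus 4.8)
The plan is to read the difference equation \eqref{eq:DF-takagi} as a prescription of Schauder coefficients and then invoke the uniqueness of Schauder expansions stated just above. Indeed, comparing \eqref{eq:DF-takagi} with the formula for $a_{n,i}$ attached to \eqref{eq:Schauder-expansion}, the assertion that a continuous function $f$ with $f(0)=f(1)=0$ satisfies \eqref{eq:DF-takagi} is \emph{precisely} the assertion that every Schauder coefficient of $f$ equals $2^{-(n+1)}$. Since a continuous function vanishing at $0$ and $1$ is determined by its Schauder coefficients, there is at most one such $f$, so the whole theorem reduces to verifying that $T$ itself has coefficients $a_{n,i}=2^{-(n+1)}$.

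To do this I would argue directly from \eqref{eq:Takagi-def}. For fixed $n$, the function $\phi(2^n x)$ on $[0,1]$ is $2^{-n}$-periodic and equals a tent of height $1/2$ on each dyadic interval $[i/2^n,(i+1)/2^n]$; since $S_{n,i}(x)=2\phi(2^n x)$ there, $S_{n,i}$ vanishes elsewhere, and $\phi$ vanishes at the shared endpoints, we get $\phi(2^n x)=\frac12\sum_{i=0}^{2^n-1}S_{n,i}(x)$ on $[0,1]$, hence $2^{-n}\phi(2^n x)=\sum_{i=0}^{2^n-1}2^{-(n+1)}S_{n,i}(x)$. Summing over $n$ gives
\begin{equation*}
T(x)=\sum_{n=0}^\infty\sum_{i=0}^{2^n-1}2^{-(n+1)}S_{n,i}(x),
\end{equation*}
and the series converges uniformly because its $n$-th block has sup-norm $2^{-(n+1)}$. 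By uniqueness of the Schauder expansion, $a_{n,i}=2^{-(n+1)}$ for all $n,i$, which is \eqref{eq:DF-takagi}; the boundary conditions $T(0)=T(1)=0$ are immediate from \eqref{eq:Takagi-def}. (One can also bypass the decomposition of $\phi(2^n x)$ and compute $a_{n,i}$ for $T$ from \eqref{eq:general-dyadic-difference}: writing the midpoint and the right endpoint as $i/2^n+1/2^{n+1}$ and $(2i+1)/2^{n+1}+1/2^{n+1}$ and using $s_{2i+1}=s_i+1$, the $s_i$-terms cancel in the combination defining $a_{n,i}$ and one is left with $2^{-(n+1)}$.)

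Once \eqref{eq:DF-takagi} is recognized as fixing the Schauder coefficients, there is no real obstacle: uniqueness is immediate, since any continuous solution has the same Schauder coefficients as $T$ and hence equals $T$. If one prefers to avoid Schauder theory altogether, one can instead observe that \eqref{eq:DF-takagi} expresses the value of a solution at the midpoint $(2i+1)/2^{n+1}$ in terms of its values at $i/2^n$ and $(i+1)/2^n$, so starting from $g(0)=g(1)=0$ an induction on $n$ forces any solution $g$ to agree with $T$ on all dyadic rationals, and continuity together with density of the dyadics finishes the proof. The only mildly delicate point on the existence side is the uniform convergence of the Schauder series for $T$, which is needed to legitimately identify coefficients term by term, but this follows at once from the geometric decay of the block sup-norms.
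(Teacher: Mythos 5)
Your argument is correct and is exactly the route the paper takes: the paper states the Schauder coefficient formula for continuous functions vanishing at $0$ and $1$ and then obtains the theorem "immediately" by noting that $2^{-n}\phi(2^n x)=\sum_{i}2^{-(n+1)}S_{n,i}(x)$, so that $T$ has all Schauder coefficients equal to $2^{-(n+1)}$ and uniqueness follows from uniqueness of the Schauder expansion. You have simply written out the details (plus a second verification via \eqref{eq:general-dyadic-difference} and an elementary dyadic-induction alternative) that the paper leaves implicit.
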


Next, recall Lebesgue's singular function. Imagine flipping an unfair coin with probability $r \in (0,1)$ of heads and probability $1-r$ of tails. Note that $r \neq 1/2$. 
Let the binary expansion of $t \in [0,1]$: $t=\sum_{n=1}^{\infty}\omega_n / 2^n$ be determined by flipping the coin infinitely many times. 
More precisely, $\omega_n=0$ if the $n$-th toss is heads and $\omega_n=1$ if it is tails. 
We define {\it Lebesgue's singular function} $L_r(x)$ as the distribution function of $t$:
\begin{equation*}
L_r(x):=Prob\{t \leq x\}, \qquad 0 \leq x \leq 1.
\end{equation*} 
With the function $L_r$ is associated a probability measure $\mu_r$ on $[0,1]$, called the {\em binomial measure}, under which the binary digits of a number $t\in[0,1]$ are independent, taking the values $0$ and $1$ with probabilities $r$ and $1-r$, respectively.

It is well-known that $L_r(x)$ is strictly increasing, but its derivative is zero almost everywhere. De Rham~\cite{deRham2} showed that $L_r(x)$ is the unique continuous solution of the functional equation
\begin{equation} \label{eq:Lebesgue-FE}
        L_{r}(x)=
        \begin{cases}
                r L_{r}(2x), 
                        & \qquad 0 \leq x \leq \tfrac12, \\
                (1-r)L_r(2x-1)+r, 
                        & \qquad \tfrac12 \leq x \leq 1.
        \end{cases}
\end{equation}
Hata and Yamaguti showed that $L_r(x)$ is also the unique continuous solution of the following discrete boundary value problem:
\begin{equation} \label{eq:DF-lebesgue}
L_r\left( \frac{2i+1}{2^{n+1}} \right)=(1-r) L_r \left( \frac{i}{2^n} \right)+ r L_r \left(\frac{i+1}{2^n}\right),
\end{equation}
where $0 \le i\le 2^n-1$ and $n\in\ZZ_+$. The boundary conditions are $L_r(0)=0$ and  $L_r(1)=1$. From \eqref{eq:DF-takagi} and \eqref{eq:DF-lebesgue}, they proved the important and useful relationship
\begin{equation}
\left.\frac12\frac{\partial}{\partial r}L_r(x)\right|_{r=1/2}=T(x).
\label{eq:takagi-lebesgue}
\end{equation}
This identity can also be obtained from the following expression for $L_r(x)$, due to Lomnicki and Ulam~\cite{Lomnicki-Ulam-1934}:
\begin{equation} \label{eq:Ulam}
L_{r}(x)=\frac{r}{1-r}\sum_{n=1}^{\infty}\eps_n r^{n-I_n}(1-r)^{I_n}.
\end{equation}
Differentiating this with respect to $r$ and setting $r=1/2$ gives the right hand side of \eqref{eq:Kawamura-form}, and hence we have \eqref{eq:takagi-lebesgue}. (In \cite{Kawamura1} the reverse approach is taken, and \eqref{eq:Kawamura-form} is derived from \eqref{eq:Ulam} using \eqref{eq:takagi-lebesgue}.) We note that \eqref{eq:takagi-lebesgue} also leads to a very short proof of \eqref{eq:Kruppel}: It is easy to see that
\begin{equation*}
L_r\left(\frac{k}{2^m}\right)=\sum_{j=0}^{k-1}r^{m-s_j}(1-r)^{s_j},
\end{equation*}
where $s_j$ is the number of $1$'s in the binary expansion of $j$.
Differentiating gives \eqref{eq:Kruppel}.

The functional equations \eqref{eq:takagi FA} and \eqref{eq:Lebesgue-FE} are both special cases of the general family of functional equations studied by de Rham~\cite{deRham2}. De Rham considers the Takagi function and Lebesgue's singular function in separate papers, and does not appear to have noticed the relationship \eqref{eq:takagi-lebesgue}.

\subsection{Evaluating $T(x)$ for rational $x$} \label{subsec:rational}

One particular use of the functional equation \eqref{eq:takagi FA} is to the exact evaluation of $T(x)$ for rational $x$. As noted by Knuth \cite[p.~32, p.~103]{Knuth}, $T(x)$ is rational whenever $x$ is, and by applying \eqref{eq:takagi FA} repeatedly one obtains a system of linear equations which is easily solved. We give the details here, and also examine the number of iterations required to compute $T(x)$.

Let $x=p/q$, where $p,q\in\NN$ with $gcd(p,q)=1$. Assume first that $p<q/2$. Then, by \eqref{eq:takagi FA} and the symmetry of $T$, we can write $T(p/q)=\frac12 T(p'/q)+(p/q)$, where $p'=\min\{2p,q-2p\}$. If $q$ is even, the fraction $p'/q$ simplifies. If $q$ is odd, then $gcd(p',q)=1$ again. These ideas lead to the following two-stage algorithm for evaluating $T(p/q)$:

\bigskip
{\em Step 1.} Let $q=2^m q'$, with $q'$ odd, and assume $gcd(p,q)=1$. Put $q_0:=q$, $p_0:=\min\{p,q-p\}$, and $q_j:=q_{j-1}/2$, $p_j:=\min\{p_{j-1},q_j-p_{j-1}\}$, $j=1,\dots,m$. Let $p':=p_m$. Then, after applying the functional equation $m$ times, putting the results together and simplifying, we obtain
\begin{equation*}
T\left(\frac{p}{q}\right)=\frac{1}{2^m}T\left(\frac{p'}{q'}\right)+\frac{1}{q}\sum_{j=0}^{m-1}p_j.
\end{equation*}
So it remains to compute $T(p/q)$ for odd $q$.

\bigskip
{\em Step 2.} Let $q$ be odd and $gcd(p,q)=1$. Put $p_0:=\min\{p,q-p\}$, and $p_j:=\min\{2p_{j-1},q-2p_{j-1}\}$, $j=1,2,\dots$. Note that $T(p_j/q)=\frac12 T(p_{j+1}/q)+(p_j/q)$ for each $j\geq 0$. Since $p_j\equiv \pm 2p_{j-1}\, (\mod q)$, we have $p_j\equiv \pm 2^j p_0\, (\mod q)$ for each $j$, so there will be some positive integer $j$ such that $p_j=p_0$. The smallest such $j$ is the number $k:=\min\{j\in\NN: 2^j\equiv \pm 1\, (\mod q)\}$. Such a $j$ always exists by Euler's theorem, and $k\leq \varphi(q)$, where the Euler function $\varphi(q)$ denotes the number of integers in $\{1,2,\dots,q-1\}$ relatively prime to $q$. Let $\ord_q(2)$ denote the order of $2$ in the group of units of $\ZZ/q\ZZ$. That is, $\ord_q(2)$ is the smallest positive integer $n$ such that $2^n\equiv 1\, (\mod q)$. It is an elementary exercise in number theory to show that
\begin{equation*}
k=\begin{cases}
\frac12\ord_q(2), & \mbox{if $q|2^j+1$ for some $j\in\NN$}\\
\ord_q(2), & \mbox{otherwise}.
\end{cases}
\end{equation*}
It now takes precisely $k$ iterations of the functional equation to express $T(p/q)$ in terms of itself. Solving for $T(p/q)$ and simplifying, we eventually obtain
\begin{equation*}
T\left(\frac{p}{q}\right)=\frac{1}{q(2^k-1)}\sum_{j=0}^{k-1}2^{k-j}p_j.
\end{equation*}

The inverse problem is also interesting: given a rational $y$ in the range of $T$, is there a rational point $x\in[0,1]$ such that $T(x)=y$? This question is still open. Knuth \cite[p.~103]{Knuth} gives an algorithm which produces solutions of the equation $T(x)=y$ for many rational $y$. This involves starting with an initial value $v$ and walking along a particular directed graph, updating $v$ by some fixed arithmetic operation at each node. Which node may be visited next depends not only on the graph but also on whether the transition will keep the value of $v$ within certain bounds. The algorithm terminates when one visits a node for the second time with the same value of $v$. It is, however, not known whether the algorithm always terminates. A different method which gives solutions for many (but not all) rational $y$ is given by Allaart \cite[Section 4.2]{Allaart5}.

\section{Level sets} \label{sec:level-sets}

In this section we consider the level sets
\begin{equation*}
L(y):=\{x\in[0,1]: T(x)=y\}, \qquad y\in\RR.
\end{equation*}
Of course, $L(y)=\emptyset$ if $y\not\in[0,2/3]$. The simplest level set is $L(0)=\{0,1\}$. At the other extreme, in view of Theorem \ref{thm:maxima}, we have that $L(2/3)$ is an uncountable (Cantor) set of dimension $1/2$. In general, $L(y)$ can be finite, countably infinite or uncountable, and which of these three possibilities is the most common depends on the precise mathematical meaning assigned the word ``most". If $L(y)$ is finite, it must have an even number of points by the symmetry of the graph of $T$, but any even positive number is possible. If $L(y)$ is uncountable, its Hausdorff dimension can be zero or strictly positive, but never more than $1/2$.

Level sets are partitioned into easier to understand pieces called {\em local level sets}. Each local level set is either finite or a Cantor set, and the members of a local level set are easily obtained from one another by certain combinatorial operations (``block flips") on their binary expansions. A level set can consist of finitely many, countably infinitely many, or uncountably many local level sets. As with the cardinalities of the level sets, which of these three is the most common depends on how one defines ``most". Many questions about the level sets of $T$ remain open.

\subsection{Finite or infinite?} \label{subsec:infinite}

The level sets of the Takagi function and related functions were first considered by Anderson and Pitt \cite{Anderson}. Their Theorem 7.3, which applies to a large class of functions, implies that $L(y)$ is countable for almost every $y\in[0,\frac23]$. This result was improved recently by Buczolich \cite{Buczolich2}, who focused on the Takagi function itself and concluded the following.

\begin{theorem}[Buczolich 2008] \label{thm:buczo}
For almost every ordinate $y$, $L(y)$ is finite.
\end{theorem}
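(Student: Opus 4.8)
The plan is to show that for almost every $y$, the level set $L(y)$ is finite by analyzing the random-walk structure of the binary expansions of points in $L(y)$, following the ideas in Section \ref{sec:graph}. The starting point is the random-walk representation \eqref{eq:D-expression} together with Lemma \ref{lem:invariance}: whether $x\in L(y)$ is governed by the path $\{D_n(x)\}_{n\ge 1}$, and in particular the contribution to $T(x)$ coming from digit $n$ onward is controlled by $D_n(x)$ and by $|D_n(x)|$ alone up to the sign structure. First I would make precise the mechanism by which $L(y)$ acquires infinitely many points: if $x_0 = k/2^{2m}$ is a balanced dyadic rational (so $D_{2m}(x_0)=0$), then by Lemma \ref{lem:similar-copies} the graph of $T$ above the dyadic interval of length $2^{-2m}$ at $x_0$ is a rescaled copy of the whole graph, sitting at height $T(x_0)$. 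Hence the horizontal line at height $y$ meets this hump in a copy of $L\big(2^{2m}(y-T(x_0))\big)$, which is nonempty precisely when $y - T(x_0) \in [0, \tfrac23 4^{-m}]$, and generically (when $y-T(x_0)$ is strictly between $0$ and the hump height) this intersection is itself a nontrivial level set. So $L(y)$ can be infinite only if $y$ is ``caught'' inside infinitely many nested humps, or meets infinitely many disjoint humps — this is exactly the phenomenon encoded by the set $S$ of points lying on humps of infinitely many generations from Section \ref{subsec:Hausdorff}.

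Next I would set up the measure-theoretic estimate. Fix a large $N$ and consider, for a path of $N$ binary digits ending in a balanced position, the ``excursion structure'': decompose $\{1,\dots,N\}$ into the maximal blocks between successive returns of $D_n$ to $0$. Each balanced dyadic rational $x_0 = 0.\eps_1\cdots\eps_{2m}$ of order $m$ contributes a hump of height $\tfrac23 4^{-m}$, and by Lemma \ref{lem:hump-count}(i) there are $\binom{2m}{m}$ of them. For $L(y)$ to contain a point whose first $2m$ digits are those of $x_0$ and which then continues nontrivially inside that hump, we need $y$ to lie in an interval of length $\tfrac23 4^{-m}$ around $T(x_0)$. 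The key quantitative step is a covering bound: the set of ordinates $y$ for which $L(y)$ contains points inside humps of order $\ge m$ of generation $\ge \ell$ is covered by the images under the relevant hump maps of the whole range $[0,\tfrac23]$, and by counting humps and tracking the contraction ratios (which are $4^{-m}$, with multiplicities given by Catalan-type counts as in Lemma \ref{lem:hump-count}(ii)–(iii)) I would show that the total length of this cover tends to $0$ as $\ell\to\infty$. Concretely, the bad set of $y$ is contained in $\bigcap_\ell (\text{union of }\ell\text{-generation hump images of }[0,\tfrac23])$, and since $\sum_n C_n x^n$ has radius of convergence $1/4$ with $\sum C_n (1/4)^n = 2$ (used already in Section \ref{subsec:Hausdorff}), the nested intersections have Lebesgue measure $0$. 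The complementary set of $y$ — those meeting only finitely many generations of humps — yields finite $L(y)$, because once no further hump nesting is available, the local monotonicity of $T$ on the complement of the first-generation humps (as in the analysis of $E_0$ in Section \ref{subsec:Hausdorff}) forces only finitely many solutions of $T(x)=y$.

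The main obstacle I expect is the second step: correctly bounding the measure of the set of ``bad'' ordinates. The subtlety is that humps of a given order are not independent — a point can lie simultaneously inside many nested humps of increasing order, and also inside humps that are siblings rather than ancestors — so a naive union bound over all $\binom{2m}{m}$ humps of order $m$ times the hump height $\tfrac23 4^{-m}$ gives $\tfrac23\binom{2m}{m}4^{-m} \sim \tfrac23/\sqrt{\pi m}$, which does \emph{not} sum and does \emph{not} tend to $0$ fast enough. The fix is to organize the count by \emph{generation} rather than order, using Lemma \ref{lem:hump-count}(iii): the number of first-generation humps of order $m$ is only $2C_{m-1} \sim 4^m/(2\sqrt\pi m^{3/2})$, and iterating the self-similar structure $\ell$ times replaces the range $[0,\tfrac23]$ by a union whose total length is multiplied, at each generation, by $\sum_{m\ge 1} 2C_{m-1}4^{-m} = 1$ — so the cover length stays bounded but one must instead exploit that an individual $y$ can be inside $\ell$ nested generations only if a sequence of increasingly stringent interval-membership conditions holds, and a Borel–Cantelli argument across generations (with the walk's excursion lengths playing the role of independent-ish increments) gives that this happens for a $\lambda$-null set of $y$. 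Making this Borel–Cantelli step rigorous — identifying the right independent events and verifying the summability — is the crux, and is essentially what Buczolich's argument accomplishes.
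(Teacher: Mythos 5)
Your diagnosis of the central difficulty is exactly right --- the union bound over all $\binom{2m}{m}$ humps of order $m$ produces terms of size $\sim 1/\sqrt{\pi m}$, which do not sum --- but the repair you propose does not close the gap, and you concede as much. Reorganizing the count by generation cannot work on its own: as you compute, each generation multiplies the total cover length by $\sum_{m\ge 1}2C_{m-1}4^{-m}=1$, so the generation-$\ell$ cover has constant total length $\tfrac23$ for every $\ell$, and the expected number of humps over \emph{all} generations whose $y$-projection contains a random $y$ is infinite (consistent with the fact, stated right after the theorem, that $\int_0^{2/3}|L(y)|\,dy=\infty$). There is therefore no summable sequence of events on which to run Borel--Cantelli, and ``identifying the right independent events'' is precisely the step you leave open. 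Note also that the paper deduces $\lambda(\pi_Y(S))=0$ (for $S$ the set of graph points lying in humps of infinitely many generations) as a \emph{consequence} of Theorem \ref{thm:buczo} in Section \ref{subsec:infinite}, which is a strong hint that the nested-generation covering is not an easier independent route. A further issue is that finiteness of the nesting depth alone does not yield finiteness of $L(y)$: a line could a priori meet infinitely many pairwise disjoint humps of bounded generation, a case your covering by generation does not address.

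The missing idea is the reduction to \emph{leading} humps. By Lemma \ref{lem:invariance}, every hump has the same $y$-projection as a leading hump of the same order and generation, and each leading hump's projection is shared by only finitely many humps (Lemma \ref{lem:leading-humps}); moreover, Lemma \ref{lem:bijection} is used to show that $L(y)$ is finite whenever the line at height $y$ meets only finitely many leading humps. The decisive point is that there are only $C_m$ leading humps of order $m$ \emph{in total, across all generations} (they correspond to nonnegative walks of length $2m$ returning to $0$), so the first moment
\begin{equation*}
\sum_{H\in\HH'}\sP\bigl(y\in\pi_Y(H)\bigr)=\sum_{m=0}^\infty C_m\Bigl(\frac14\Bigr)^{m}=2<\infty,
\end{equation*}
and the Borel--Cantelli lemma applies directly: almost every $y$ lies in the projection of only finitely many leading humps, hence $L(y)$ is finite. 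It is this symmetry reduction from $\binom{2m}{m}$ to $C_m$ --- not a finer organization of the same divergent count --- that makes the estimate converge.
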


\begin{proof}[Sketch of proof (Allaart 2011)]
We sketch a proof here that is slightly different from the original proof given by Buczolich, and which uses the notion of humps and leading humps defined in Section \ref{sec:graph}. For the full details, see \cite[Section 3]{Allaart4}. 

Observe first that Lemma \ref{lem:invariance} immediately implies the following.

\begin{lemma} \label{lem:leading-humps}
For every hump $H$ there is a leading hump $H'$ of the same order and generation as $H$, such that $\pi_Y(H)=\pi_Y(H')$. On the other hand, for every leading hump $H'$ there are only finitely many humps $H$ such that $\pi_Y(H)=\pi_Y(H')$.
\end{lemma}

Next, define a set
\begin{equation*}
X^*:=[0,1]\backslash\bigcup_{x_0\in\BB_1}I(x_0).
\end{equation*}
In other words, $X^*$ is obtained by removing all the dyadic closed intervals above which the graph of $T$ has a first-generation hump. The importance of $X^*$ is made clear by the next lemma, which we state here without proof.

\begin{lemma} \label{lem:bijection}
The Takagi function $T$ maps $X^*$ onto $[0,\frac12]$. Moreover, $T$ is strictly increasing on $X^*\cap[0,\frac12)$.
\end{lemma}

Lemmas \ref{lem:leading-humps} and \ref{lem:bijection} can be used to prove the crucial fact that $L(y)$ is finite whenever the horizontal line $l_y$ at level $y$ intersects only finitely many leading humps. Using this fact, the proof of the theorem can be completed as follows. If $y$ is chosen at random from $[0,\frac23]$ and $H$ is a leading hump of order $m$, the probability that the line $l_y$ intersects $H$ is ${(\frac14)}^m$. Letting $\HH'$ denote the set of all leading humps, this gives
\begin{equation*}
\sum_{H\in\HH'}\sP(y\in \pi_Y(H))=\sum_{m=0}^\infty C_m\left(\frac14\right)^m<\infty,
\end{equation*}
since there are $C_m$ leading humps of order $m$ by Lemma \ref{lem:hump-count}, and $C_m\sim 4^m/(m^{3/2}\sqrt{\pi})$. Thus, by the Borel-Cantelli lemma, the probability that $l_y$ intersects infinitely many leading humps is zero. Therefore, $L(y)$ is finite with probability $1$.
\end{proof}

Despite the above result, the {\em average} cardinality of the level sets of $T$ is infinite. That is,
\begin{equation*}
\int_0^{2/3} |L(y)|=\infty.
\end{equation*}
This was shown by Lagarias and Maddock \cite{LagMad2}. An alternative proof based on Lemma \ref{lem:bijection} is given by Allaart \cite{Allaart4}.

Whereas the above results are probabilistic in nature, a quite different picture emerges when one views the level sets of $T$ from the perspective of Baire category. Define the sets
\begin{gather*}
S_\infty^{co}:=\{y\in[0,\tfrac23]: L(y)\ \mbox{is countably infinite}\},\\
S_\infty^{uc}:=\{y\in[0,\tfrac23]: L(y)\ \mbox{is uncountably infinite}\}.
\end{gather*}

\begin{theorem}[Allaart 2011] \label{thm:S-uncountable}
The set $S_\infty^{uc}$ has the decomposition
$S_\infty^{uc}=E\cup M$,
where $E$ is a dense $G_\delta$ set, and $M$ is a countable set disjoint from $E$ which consists exactly of the local maximum ordinate values of $T$. As a result, the set
$\{y\in[0,\frac23]: L(y)\ \mbox{\rm is countable}\}$
is of the first category.
\end{theorem}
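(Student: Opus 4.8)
The plan is to prove the decomposition $S_\infty^{uc}=E\cup M$ by establishing, in order, that $M\subseteq S_\infty^{uc}$, that $E:=S_\infty^{uc}\setminus M$ is a dense $G_\delta$, and then deducing the first-category statement for the countable-level-set ordinates. First I would verify $M\subseteq S_\infty^{uc}$: if $y$ is a local-maximum value of $T$, then $y=\pi_Y(H)$ for some hump $H$, and by Lemma \ref{lem:similar-copies} the top of that hump is (in suitable coordinates) a copy of the point $(x_0,0)$ at the bottom of $\GG_T$ for a balanced dyadic $x_0$; but $L(0)=\{0,1\}$ sits at the bottom of $\GG_T$, and near the top of a hump the level set $L(y)$ contains the preimages of a nondegenerate piece accumulating from both sides, so $L(y)$ is uncountable. (More carefully: a local maximum value is attained at the top of a hump of some order $m$, and the condition $\eps_{2n-1}+\eps_{2n}=1$ for $n>m$ already forces uncountably many solutions, exactly as in Theorem \ref{thm:maxima}.) This also shows $M$ is countable (indexed by the countably many balanced dyadic rationals) and, by construction, disjoint from $E$.

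Next I would show $E$ is dense $G_\delta$. For density, note that local-maximum values of $T$ are dense in $[0,\tfrac23]$ (the points of local maximum lie dense in $[0,1]$, as recorded after Definition \ref{def:humps}), and near any such value one can perturb $y$ slightly to land on an ordinate whose horizontal line $l_y$ passes strictly through the interiors of infinitely many leading humps of increasing order — this produces a point of $E$ arbitrarily close to any point of $[0,\tfrac23]$. For the $G_\delta$ structure, the natural description is $E=\bigcap_{k}U_k$ where $U_k$ is the set of $y$ such that $l_y$ meets the interior of at least one leading hump of order $\geq k$; each $U_k$ is open because "meeting the interior of a hump" is an open condition in $y$ (the $\pi_Y$-image of a hump minus the finitely many local-max endpoints at its level is, up to a countable set, relatively open), and one checks that membership in all $U_k$ forces $L(y)$ to be uncountable (infinitely many nested similar copies of $\GG_T$ appear inside $L(y)$, giving a Cantor set) while excluding the isolated top-of-hump ordinates that land in $M$. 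Conversely, if $y\notin M$ and $L(y)$ is uncountable, then since each local level set is finite or a Cantor set, $L(y)$ must contain a Cantor set, which by the hump structure (Lemma \ref{lem:leading-humps}, Lemma \ref{lem:bijection}) can only happen if $l_y$ enters infinitely many humps properly, i.e. $y\in\bigcap_k U_k$; so $E=\bigcap_k U_k\setminus M = \bigcap_k U_k$ after one checks $M\cap\bigcap_k U_k=\emptyset$.

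Finally, the first-category conclusion follows formally: by Theorem \ref{thm:buczo} the set $S_{\mathrm{fin}}$ of ordinates with $L(y)$ finite has full Lebesgue measure, and $S_\infty^{co}$ is contained in the complement of $E\cup S_{\mathrm{fin}}$; since $E$ is a dense $G_\delta$, its complement $[0,\tfrac23]\setminus E$ is of the first category, and $\{y:L(y)\text{ countable}\}=S_{\mathrm{fin}}\cup S_\infty^{co}\subseteq([0,\tfrac23]\setminus E)$, hence is of the first category. (One also notes $M\subseteq E^c$ is countable, so this does not disturb the argument.)

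The main obstacle will be the precise topological bookkeeping in the second paragraph: showing that $U_k$ is genuinely open requires understanding exactly which $y$ in $\pi_Y(H)$ correspond to $l_y$ passing through the interior versus grazing the top of a hump $H$, and this is where Lemma \ref{lem:leading-humps}'s "only finitely many humps at a given leading-hump level" is doing real work — it guarantees that the bad set of grazing ordinates (the $M$ part) is only countable and that away from it the hump-intersection condition is stable under small perturbations of $y$. Equally delicate is the converse direction, namely that uncountability of $L(y)$ for $y\notin M$ actually forces infinitely many proper hump intersections rather than, say, one Cantor-type local level set arising from a single infinite nesting; here one must invoke the structure of local level sets and the fact (from Lemma \ref{lem:bijection}) that on $X^*$ the map $T$ is injective, so all the "branching" in a level set is accounted for by the first-generation humps it meets, iterated.
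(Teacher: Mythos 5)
First, a point of comparison: the survey does not actually prove Theorem \ref{thm:S-uncountable} --- it states the result and refers to \cite[Section~4]{Allaart4} for the proof --- so there is no in-paper argument to measure your sketch against, and I can only judge it on its own terms. On those terms the overall plan is reasonable: $M\subseteq S_\infty^{uc}$ via the scaled copy of the maximum set of Theorem \ref{thm:maxima} at the top of each hump (your ``more careful'' parenthetical is the correct argument; the preceding sentence about $L(0)$ is confused and should be deleted), countability of $M$ from countability of the balanced dyadic rationals, and the final category deduction from the fact that the complement of a dense $G_\delta$ is meager. In that last step the appeal to Theorem \ref{thm:buczo} is superfluous: $\{y:L(y)\ \mbox{countable}\}\subseteq[0,\frac23]\setminus E$ already gives first category, and Lebesgue-measure information is irrelevant to the category claim.

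The genuine gap is the central implication ``$y\in\bigcap_k U_k$ forces $L(y)$ to be uncountable.'' Your $U_k$ only requires that $l_y$ meet the interior of \emph{some} leading hump of order $\geq k$, and for different $k$ these humps may have pairwise disjoint base intervals (for instance, first-generation humps scattered across $[0,1]$). In that configuration each hump contributes only finitely or countably many points to $L(y)$, and a countable closed set with limit points is not a contradiction, so nothing prevents $L(y)$ from remaining countable; indeed the existence of countably infinite level sets such as $L(\frac12)$ shows that ``meets infinitely many humps'' alone cannot suffice. Uncountability of $L(y)$ (for $y$ not a local maximum value) requires producing a point $x^*\in L(y)$ with $D_n(x^*)=0$ for infinitely many $n$, so that the local level set $L_{x^*}^{loc}$ is a Cantor set, and this requires an infinite \emph{nested} chain of humps met by $l_y$ whose base intervals shrink to $x^*$. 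The word ``nested'' in your phrase ``infinitely many nested similar copies of $\GG_T$ appear inside $L(y)$'' is therefore carrying the entire weight of the proof and is not delivered by the definition of $U_k$: you must either redefine the $U_k$ in terms of nested chains (and then re-establish openness and density, which is where the real work lies), or prove that membership in every $U_k$ forces a nested subfamily, which does not follow in general. Two further assertions are left unproved and are not routine: that no local maximum ordinate belongs to $\bigcap_k U_k$ (needed for $E\cap M=\emptyset$), and, in the converse inclusion $S_\infty^{uc}\setminus M\subseteq\bigcap_k U_k$, the case in which $L(y)$ is uncountable because it contains uncountably many \emph{finite} local level sets rather than a single Cantor one.
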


For the proof and a more explicit description of the set $E$, see Allaart \cite[Section~4]{Allaart4}. There it is also shown that $S_\infty^{uc}$ does not contain any dyadic rational ordinates $y$. Note that the residual set $S_\infty^{uc}$ has Lebesgue measure zero by Theorem \ref{thm:buczo}. However, it has full Hausdorff dimension one; see Lagarias and Maddock \cite{LagMad2}.

The set $S_\infty^{co}$ is rather more difficult to describe. It contains the images of all dyadic rational abscissas $x$ in $[0,1]$, so it is dense in $[0,\frac23]$. But it is not known whether it contains more. 

\bigskip
Closely related to the level sets of $T$ is the {\em occupation measure} defined by $\mu_T(A)=\lambda(\{x\in[0,1]: T(x)\in A\})$, for Borel sets $A\subset\RR$. Buczolich \cite{Buczolich2} shows that $\mu_T$ is singular with respect to Lebesgue measure. This is witnessed by the set $S$ from Section \ref{subsec:Hausdorff}: It is relatively straightforward to show that $|L(y)|=\infty$ when $y\in \pi_Y(S)$, so Theorem \ref{thm:buczo} implies $\lambda(\pi_Y(S))=0$. On the other hand, $\pi_X(S)$ has full measure in $[0,1]$, because almost every $x\in[0,1]$ has the property that $D_n(x)=0$ for infinitely many $n$. Consequently, $\mu_T(\pi_Y(S))=\lambda(\pi_X(S))=1$.

\subsection{Cardinalities of finite level sets} \label{subsec:finite}

Since almost all level sets are finite in the Lebesgue sense, it is natural to ask what these finite cardinalities can be. By the symmetry of $T$ and the fact that $L(T(\frac12))=L(\frac12)$ is countably infinite, the number of points in each finite level set must be even. Allaart \cite{Allaart5} shows that, vice versa, every positive even number occurs. In fact, we have the following. Let
\begin{equation*}
S_{2n}:=\{y: |L(y)|=2n\}, \qquad n\in\NN.
\end{equation*}

\begin{theorem}[Allaart 2011] \label{thm:all-evens}
For each $n\in\NN$, $S_{2n}$ is uncountable but nowhere dense.
\end{theorem}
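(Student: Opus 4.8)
The plan is to exhibit, for each $n \in \NN$, an explicit one-parameter family of ordinates $y$ with $|L(y)| = 2n$, showing it is uncountable, and then argue separately that $S_{2n}$ has empty interior. The natural vehicle is Lemma \ref{lem:bijection} together with the hump structure of Section \ref{sec:graph}. Recall that $T$ restricted to $X^*$ is a strictly increasing bijection onto $[0,\frac12]$, so each $y \in (0,\frac12)$ that is not a local maximum ordinate contributes exactly one ``base'' preimage in $X^*$; every other preimage of $y$ lies inside some first-generation hump. Since each first-generation hump $H(x_0)$ of order $m$ is a copy of $\GG_T$ scaled by $4^{-m}$ and shifted up by $T(x_0)$, a horizontal line at level $y$ meets $H(x_0)$ in a set that, after rescaling, is the level set $L(4^m(y - T(x_0)))$ of $T$ itself. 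Thus $|L(y)|$ is governed by which humps the line $l_y$ crosses and how it intersects each of them — and, crucially, by Lemma \ref{lem:leading-humps} the picture is controlled by the finitely many leading humps that $l_y$ meets.

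First I would set up the counting recursion: if $l_y$ meets leading humps (equivalently, by Lemma \ref{lem:leading-humps}, humps) $H_1, \dots, H_r$, all of generation $\geq 1$, with $H_i$ of order $m_i$ and base height $t_i = T(x_0^{(i)})$, then $|L(y)| = 1 + \sum_{i} |L(4^{m_i}(y - t_i))|$ when $y \notin M \cup \{\frac12\}$ and $y < \frac12$ (the leading ``$1$'' is the base point in $X^*$; by symmetry about $x = \frac12$ one gets the analogous count on $[\frac12, \frac23]$). Iterating, $L(y)$ is finite precisely when this recursion bottoms out after finitely many steps, and its cardinality is a sum of contributions each equal to the cardinality of a ``smaller'' level set, scaled copies of the same combinatorial problem. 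To build an ordinate with exactly $2n$ preimages I would choose $y$ so that $l_y$ crosses exactly one first-generation hump, transversally and avoiding its sub-humps, except that inside that one hump the rescaled level is again of the same type; chaining $n-1$ such nested choices and terminating with an ordinate whose level set is $\{0,1\}$-like (two points), one assembles a level set of size exactly $2n$. The freedom to choose, at the innermost stage, any ordinate from an interval where $T|_{X^*}$ alone contributes preimages (i.e. a subinterval of $(0,\frac12)$ disjoint from all relevant hump projections and from $M$) gives an uncountable — indeed a perfect — set of such $y$, since $T|_{X^* \cap [0,1/2)}$ is a homeomorphism onto its image and that image contains intervals.

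Next, nowhere density. The key is that the property ``$l_y$ meets only finitely many leading humps and meets each of them, and all their iterated sub-humps, transversally, missing every hump-top'' is not open: in any interval of ordinates there are $y$ for which $l_y$ passes exactly through the top of some hump, and at such $y$ the level set acquires (by Theorem \ref{thm:maxima} applied within that hump) an uncountable local level set, so $y \notin S_{2n}$ for any $n$; and there are also $y$ for which $l_y$ meets infinitely many leading humps (these are dense by a Borel–Cantelli–free pigeonhole argument, or simply because the hump-top ordinates are dense and one can perturb), again forcing $|L(y)| = \infty$ or at least $y \notin S_{2n}$. Concretely, the local maximum ordinate values of $T$ are dense in $[0,\frac23]$ (the points of local maximum of $T$ are dense in $[0,1]$, as noted after Definition \ref{def:humps}), and at each such ordinate $y_0$ the level set is uncountable, so $S_{2n} \subset [0,\frac23] \setminus \{\text{dense set}\}$; combined with the fact that $|L(\cdot)|$ jumps on a dense set of ordinates (each hump-top being a limit, from both sides, of ordinates with different finite counts), this shows $S_{2n}$ contains no interval. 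To upgrade ``contains no interval'' to ``nowhere dense'' I would show $S_{2n}$ is actually closed, or at least that its closure still misses a dense set: if $y_k \to y$ with $|L(y_k)| = 2n$, then by an upper-semicontinuity argument for $|L(\cdot)|$ at ordinates that are not hump-tops (small perturbations of $y$ can only split crossing points, not merge transversal ones, and cannot create crossings of new humps without passing a hump-top) one gets $|L(y)| \geq 2n$ unless $y$ is a hump-top; and one checks $|L(y)| \le 2n$ as well off the hump-tops, so $\overline{S_{2n}} \setminus S_{2n}$ consists only of hump-top ordinates, whence $\overline{S_{2n}}$ has empty interior.

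The main obstacle I expect is the semicontinuity / stability analysis of $|L(y)|$ under perturbation of $y$ — making precise exactly how the finite level set changes as $l_y$ sweeps past hump-tops and past tangency configurations with sub-humps at all scales. The hump structure makes the ``generic'' behavior transparent, but one must verify carefully that the bad configurations (tangencies, hump-top crossings, accumulation of infinitely many crossed humps) form a set whose complement is open-dense-free in the right sense, i.e. that $S_{2n}$ is genuinely confined to the complement of a dense set and is closed off the hump-tops. This is exactly the delicate part handled in \cite[Section 4.2]{Allaart5}, and the cleanest route is probably to quote the structural classification of level sets into local level sets sketched in the introduction to this section (each local level set finite or a Cantor set, obtained by block flips), reducing the cardinality question to a finite combinatorial count on binary blocks and the nowhere-density to the density of block-configurations that force a Cantor local level set.
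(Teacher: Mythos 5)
First, a point of reference: the survey contains no proof of Theorem \ref{thm:all-evens} --- it is stated and attributed to \cite{Allaart5} --- so your proposal can only be judged on its own merits against the machinery the survey does provide. Your framework (Lemma \ref{lem:bijection}, the self-similar hump decomposition, and the resulting recursion for $|L(y)|$) is the right one, but the construction as described does not prove the theorem. The central problem is a parity/counting gap. Because $T(1-x)=T(x)$ and no dyadic interval $[k/4^m,(k+1)/4^m]$ is symmetric about $1/2$, every first-generation hump met by $l_y$ is met together with its distinct mirror image; each hump contributes a full level set $L(4^m(y-T(x_0)))$, whose cardinality (when finite and nonzero) is even and at least $2$. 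So each crossed mirror pair contributes a multiple of $4$, and for $y<\frac12$ one gets $|L(y)|=2+\sum_i 2k_i$ with each $k_i\geq 2$ even. Your single nested chain terminating in a two-point level set therefore yields only the cardinalities $2,6,14,30,\dots=2(2^{j}-1)$; it cannot produce $|L(y)|=4$ or $10$, for example. Realizing a general $2n$ forces you to also use ordinates $y>\frac12$ (where the two $X^*$ preimages disappear and $|L(y)|=\sum_i 2k_i$, e.g.\ $|L(\frac12+\frac{y_0}{4})|=2|L(y_0)|$ when only the two order-one humps are crossed), and to interleave the ``series'' and ``parallel'' steps by induction on $n$. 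Separately, your route to uncountability rests on a false claim: there is \emph{no} subinterval of $(0,\frac12)$ disjoint from all first-generation hump projections, since on such an interval every level set would have exactly two points, contradicting the density of the set $E$ of ordinates with uncountable level sets in Theorem \ref{thm:S-uncountable}. The set of ``clean'' ordinates is uncountable (it has positive measure, cf.\ Theorem \ref{thm:measure-of-S2}, and contains the explicit Cantor set of Theorem \ref{thm:no-3-zeros}), but it is itself nowhere dense, so you must seed the induction with one of these facts rather than with an interval.

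For nowhere density, ``$S_{2n}$ misses a dense set'' is not sufficient (the irrationals miss a dense set), and the upper-semicontinuity/closedness claim you propose to fill the gap is exactly the part you have not verified; I would not expect $\overline{S_{2n}}\setminus S_{2n}$ to be controlled so easily. A cleaner argument avoids semicontinuity altogether: near the top of $\GG_T$ the graph contains two quarter-scale copies of itself, whence $|L(y')|\geq 2^j$ for all $y'$ in an interval $(\frac23-c4^{-j},\frac23)$; transplanting this into an arbitrary hump, and using that local maximum ordinates are dense in $[0,\frac23]$ (local maximum points are dense in $[0,1]$ and $T$ is continuous and onto), every ordinate interval contains a whole subinterval on which $|L(y)|>2n$, which is disjoint from $S_{2n}$. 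In short: keep the hump recursion, but repair the induction so it reaches every even number, replace the nonexistent interval of clean ordinates by Theorem \ref{thm:no-3-zeros} or the positive measure of $S_2$, and prove nowhere density via intervals of large cardinality just below hump tops rather than via a stability analysis of $|L(\cdot)|$.
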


It is not known whether $S_{2n}$ has positive Lebesgue measure for each $n$. The argument used in the proof of Theorem \ref{thm:all-evens} unfortunately does not give enough to prove this stronger statement. As a partial result in this direction, however, Allaart \cite{Allaart5} shows that $\lambda(S_{2n})>0$ whenever $n$ is either a power of 2, or the sum or difference of two powers of 2. And for the specific case of $S_2$, fairly tight bounds on its Lebesgue measure can be given which show, not surprisingly perhaps, that $2$ is the most common finite cardinality.

\begin{theorem}[Allaart 2011] \label{thm:measure-of-S2}
The Lebesgue measure $\lambda(S_2)$ of $S_2$ satisfies
\begin{equation*}
\frac{5}{12}<\lambda(S_2)<\frac{35}{72}.
\end{equation*}
\end{theorem}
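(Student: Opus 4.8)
The plan is to rewrite $\lambda(S_2)$ in terms of the set of ordinates whose level set has more than two points, and then bound that set from both sides using the hump structure of Section~\ref{sec:graph}.

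I would first identify $S_2$ up to a null set. Fix $0<y<\tfrac12$. By Lemma~\ref{lem:bijection} the spine $X^*$ contributes exactly two points to $L(y)$ --- one in $X^*\cap[0,\tfrac12)$ and its reflection. Since $[0,1]=X^*\cup\bigcup_{x_0\in\BB_1}I(x_0)$, any further point of $L(y)$ lies in some first--generation hump interval $I(x_0)$, and by Lemma~\ref{lem:similar-copies} the number of such points in $I(x_0)$ equals $|L(4^{m}(y-T(x_0)))|$, where $m$ is the order of $x_0$; this is at least $2$ as soon as $y\in\pi_Y(H(x_0))$ and $y\neq T(x_0)$. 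Discarding the countable set $\{T(x_0):x_0\in\BB_1\}$ together with the dyadic rationals, it follows that $S_2$ coincides, up to measure zero, with $(0,\tfrac12)\setminus U$, where $U:=\bigcup_{x_0\in\BB_1}\bigl(\pi_Y(H(x_0))\cap(0,\tfrac12)\bigr)$; and since $H(\tfrac14)$ and $H(\tfrac12)$ both project onto $[\tfrac12,\tfrac23]$, one checks $|L(y)|\ge4$ for every $y\in(\tfrac12,\tfrac23)\setminus\{\tfrac58\}$, so $S_2$ has no ordinates outside $(0,\tfrac12)$. Thus $\lambda(S_2)=\tfrac12-\lambda(U)$, and it suffices to show $\tfrac1{72}<\lambda(U)<\tfrac1{12}$.

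Next I would describe $U$ via leading humps. By Lemma~\ref{lem:leading-humps}, $U$ is the union of the sets $\pi_Y(H')\cap(0,\tfrac12)$ over leading first--generation humps $H'$; and of the $2C_{m-1}$ first--generation humps of order $m$ (Lemma~\ref{lem:hump-count}(iii)) exactly half --- $C_{m-1}$ of them --- are leading. Two observations organize the count. By \eqref{eq:Kruppel}, $T(x_0)$ is a dyadic rational with denominator dividing $4^m$, so if $T(x_0)<\tfrac12$ then $T(x_0)+\tfrac23 4^{-m}\le\tfrac12-\tfrac13 4^{-m}<\tfrac12$: no first--generation hump straddles the level $\tfrac12$, and each $\pi_Y(H(x_0))\cap(0,\tfrac12)$ is therefore either all of $\pi_Y(H(x_0))$ (when $T(x_0)<\tfrac12$) or empty. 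And by \eqref{eq:Tambs-Lyche}, the ``alternating'' excursion of each order $m$ --- $x_0=\tfrac14,\tfrac3{16},\tfrac{11}{64},\dots$, the one whose associated $\pm1$ walk never exceeds $2$ --- sits at height $T(x_0)=\tfrac12$ and hence contributes nothing, so at most $C_{m-1}-1$ first--generation humps of order $m$ contribute to $U$, each a projection interval of length $\tfrac23 4^{-m}$.

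The upper bound $\lambda(S_2)<\tfrac{35}{72}$ is equivalent to $\lambda(U)>\tfrac1{72}$, and for this it is enough to exhibit a few pairwise disjoint intervals inside $U$ whose lengths sum past $\tfrac1{72}$ --- for instance the order--$3$ hump over $[\tfrac7{64},\tfrac8{64}]$, which projects to $[\tfrac38,\tfrac38+\tfrac1{96}]$, together with two order--$4$ humps chosen disjoint from it and from each other, giving already $\tfrac1{96}+\tfrac2{384}=\tfrac1{64}>\tfrac1{72}$. The lower bound $\lambda(S_2)>\tfrac5{12}$, i.e.\ $\lambda(U)<\tfrac1{12}$, is the hard part: the crude union bound only gives $\lambda(U)\le\sum_{m\ge3}(C_{m-1}-1)\tfrac23 4^{-m}=\tfrac19$, so one must quantify the heavy nesting among the projection intervals. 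My approach would be to use that every sub--$\tfrac12$ leading first--generation hump of order $\ge2$ sits over a subinterval of $[0,\tfrac14]$, on which $T(x)=2x+\tfrac14 T(4x)$ by \eqref{eq:takagi FA}, to collect the projections into finitely many explicitly computed ``outer'' intervals of low order $3\le m\le N$ --- each already containing every higher--order projection that meets it --- and then to bound the leftover projections, which escape all these outer intervals, by a convergent tail; taking $N$ large enough drives $\lambda(U)$ below $\tfrac1{12}$.

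The main obstacle is exactly this last estimate: passing from the trivial $\lambda(U)\le\tfrac19$ down to $\lambda(U)<\tfrac1{12}$ forces one to understand, scale by scale, which low--order hump projection absorbs each higher--order one, and it is this accounting --- rather than anything subtle about $S_2$ itself --- that makes the constants $\tfrac5{12}$ and $\tfrac{35}{72}$ come out the way they do. Everything else (the reduction to $U$, the non--straddling property, the identification of the height--$\tfrac12$ humps, the explicit witnesses for the upper bound) is routine given Sections~\ref{sec:graph} and \ref{sec:expressions}.
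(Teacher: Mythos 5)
Your reduction of the theorem to estimating $\lambda(U)$, where $U$ is the union of the $y$-projections of the first-generation humps intersected with $(0,\tfrac12)$, is correct and is in the same spirit as the argument the survey points to (the survey itself gives no proof, only the remark that the result follows from a counting argument based on the $C_{m-1}$ first-generation leading humps of order $m$, with details in \cite{Allaart5}). The identity $\lambda(S_2)=\tfrac12-\lambda(U)$, the observation that no first-generation hump projection straddles the level $\tfrac12$, the identification of the height-$\tfrac12$ ``alternating'' humps of each order, and the exhibition of disjoint projection intervals of total length $\tfrac1{96}+\tfrac2{384}=\tfrac1{64}>\tfrac1{72}$ are all sound, so the upper bound $\lambda(S_2)<\tfrac{35}{72}$ is essentially complete. (You should still name the two order-$4$ humps you invoke: the five leading first-generation humps of order $4$ sit at heights $\tfrac14,\tfrac{11}{32},\tfrac38,\tfrac{15}{32},\tfrac12$, so suitable disjoint choices do exist.)

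The genuine gap is the lower bound $\lambda(S_2)>\tfrac5{12}$, i.e.\ $\lambda(U)<\tfrac1{12}$. As you yourself concede, the union bound over leading first-generation humps --- even after discarding orders $1$ and $2$ and one height-$\tfrac12$ hump per order --- gives only $\lambda(U)\le\tfrac19$, hence $\lambda(S_2)\ge\tfrac12-\tfrac19=\tfrac7{18}$, which is strictly weaker than $\tfrac5{12}$. The step that would close this gap is only described as a plan (``collect the projections into finitely many outer intervals and bound the leftover by a convergent tail''), not executed, and it is precisely this accounting that produces the constant $\tfrac5{12}$; nothing in your write-up shows that the proposed truncation actually converges below $\tfrac1{12}$. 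One concrete way to organize the missing estimate: since $T(k/4^{m})$ is an even multiple of $4^{-m}$ by \eqref{eq:Kruppel} while each order-$m$ projection has length $\tfrac23\,4^{-m}<2\cdot4^{-m}$, two order-$m$ projections are disjoint unless their humps share the same height, so what must be counted order by order is the number of \emph{distinct} heights $T(x_0)<\tfrac12$ attained by order-$m$ leading first-generation humps, together with how many of the resulting intervals are already covered by lower-order ones. Until that computation is carried out, half of the theorem remains unproved.
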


Since the graph of $T$ has height $2/3$, this implies that if an ordinate $y$ is chosen at random in the range of $T$, the probability that $L(y)$ contains exactly two points lies between 62.5\% and 72.9\%. The proof of Theorem \ref{thm:measure-of-S2} is based on a simple counting argument which uses the fact, from Lemma \ref{lem:hump-count}, that the graph of $T$ contains exactly $C_{m-1}$ first-generation leading humps. See \cite{Allaart5} for the details.

Which specific ordinates $y$ satisfy $|L(y)|=2$? It is clear from the graph that $y$ must be less than $\frac12$. Allaart \cite{Allaart5} gives two sufficient conditions, the first condition being directly in terms of the binary expansion of $y$.

\begin{theorem}[Allaart 2011] \label{thm:no-3-zeros}
Let $0<y<\frac12$ such that $y$ is not a dyadic rational, and suppose the binary expansion of $y$ does not contain a string of three consecutive $0$'s anywhere after the occurrence of its first $1$. Then $|L(y)|=2$.
\end{theorem}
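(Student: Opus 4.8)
The plan is to exhibit, for any such $y$, exactly two preimages and to show there can be no others. The natural framework is the ``hump'' language of Section~\ref{sec:graph} together with Lemma~\ref{lem:bijection}. Since $0<y<\frac12$, Lemma~\ref{lem:bijection} provides a unique $x_1\in X^*\cap[0,\frac12)$ with $T(x_1)=y$; by the symmetry $T(x)=T(1-x)$, the point $x_2:=1-x_1$ is a second preimage lying in $(\frac12,1]$. Thus $|L(y)|\geq 2$ always, and the whole content of the theorem is that the digit condition on $y$ forces $|L(y)|\leq 2$. Recall from the proof sketch of Theorem~\ref{thm:buczo} that $L(y)$ is finite precisely when the horizontal line $l_y$ meets only finitely many leading humps, and more precisely that each leading hump $H$ met by $l_y$ contributes (via Lemma~\ref{lem:leading-humps}) only finitely many points of $L(y)$, with the ``extra'' points beyond $x_1,x_2$ coming exactly from leading humps of generation $\geq 1$ whose $y$-projection contains $y$. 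So the goal reduces to: \emph{show that if $y$ has no three consecutive $0$'s after its first $1$, then $l_y$ misses every first-generation (hence every higher-generation) leading hump.}

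First I would record the height/location data for leading humps. A leading hump $H(x_0)$ of order $m$ has its base at level $T(x_0)$ and rises to height $\frac{2}{3}(\frac14)^m$ on top of it; a first-generation leading hump is one where $x_0=0.\eps_1\cdots\eps_{2m}$ has $D_j(x_0)\geq 0$ for all $j\leq 2m$ and $D_j(x_0)=0$ for exactly two indices $j$ (namely some $j<2m$ and $j=2m$). The key computation is to pin down the set of possible base levels $T(x_0)$ for such humps, using the Tambs--Lyche formula \eqref{eq:Tambs-Lyche} or the random-walk formula \eqref{eq:D-expression}. The simplest first-generation leading humps are $H(0.0\underbrace{1\cdots1}_{?}\cdots)$--type configurations; for instance the hump over $[\tfrac14,\tfrac12]$ has base level $T(\tfrac14)=\tfrac14$ and occupies the $y$-interval $[\tfrac14,\tfrac14+\tfrac16]=[\tfrac14,\tfrac{5}{12}]$. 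I would enumerate, order by order, the $y$-intervals $\pi_Y(H)$ as $H$ ranges over first-generation leading humps, and show their union is contained in the set of $y\in(0,\tfrac12)$ \emph{whose binary expansion does contain $000$ somewhere after the first $1$} — equivalently, that the ``good'' $y$ of the theorem lie in the complement. Concretely: a first-generation leading hump sits on a base level $T(x_0)$ where $x_0=0.\eps_1\cdots\eps_{2m}$, and I expect that hitting it forces $y$ to agree with $T(x_0)$ in its leading binary digits and then — because $T(x_0)$ is a dyadic rational of a special shape and the hump has height a power of $\tfrac14$, hence a binary expansion $0.0\dots01\dots$ with a long run of zeros in the middle — forces a block $000$ in the expansion of $y$ past its first $1$.

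The steps in order: (1) use Lemma~\ref{lem:bijection} and symmetry to get the two canonical preimages $x_1,x_2$ and to reduce to showing no others; (2) recall from the Theorem~\ref{thm:buczo} machinery that any additional preimage forces $l_y$ to meet a leading hump of generation $\geq1$, and that it suffices to rule out first-generation leading humps; (3) compute, via \eqref{eq:Tambs-Lyche} (or \eqref{eq:D-expression}) and Lemma~\ref{lem:hump-count}, the base levels and heights of all first-generation leading humps, organized by order $m$; (4) show that for each such hump $H$, every $y\in\pi_Y(H)\cap(0,\tfrac12)$ has a string $000$ in its binary expansion after the first $1$ — the heart being that the base level is a dyadic rational whose binary expansion ends in a controlled pattern, and adding something of size $\leq\frac23(\tfrac14)^m$ to it cannot avoid creating such a run; (5) conclude by contraposition that the digit hypothesis on $y$ forces $l_y$ to miss all generation-$\geq1$ leading humps, hence $L(y)=\{x_1,x_2\}$.

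\textbf{Main obstacle.} The delicate part is Step~(4): controlling the binary expansions of \emph{all} points in $\pi_Y(H)$ for \emph{every} first-generation leading hump simultaneously. One must handle the interaction between the dyadic base level $T(x_0)$ (whose binary digits one can describe via \eqref{eq:Tambs-Lyche}) and the added quantity $\frac{1}{2^{2m}}T(2^{2m}(x-x_0))\in[0,\frac23(\tfrac14)^m]$, showing that their sum always has a $000$ block past the first $1$. This likely requires a careful case analysis on the shape of $x_0$ (how soon the first return $D_j(x_0)=0$ occurs, i.e.\ how large the initial ``excursion'' of the walk $D_j$ is), and an argument that a long enough run of zeros is inevitable — perhaps cleanest to phrase contrapositively, assuming $y$ has no $000$ after its first $1$ and deriving directly that $l_y$ cannot enter any first-generation leading hump. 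I would expect the bookkeeping, not any deep idea, to be the real work here; the structural input (Lemmas~\ref{lem:invariance}, \ref{lem:leading-humps}, \ref{lem:hump-count}, \ref{lem:bijection}) is already in place.
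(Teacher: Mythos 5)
Your architecture is sound, and it is a genuinely different route from the one the survey takes: the survey obtains this statement as a corollary of the abstract orbit characterization of $S_2$ in Theorem~\ref{thm:when-just-two} (the condition $\Phi(\Psi^n(y))>\frac23$ for all $n$), with all proofs deferred to \cite{Allaart5}, whereas you argue directly with humps. Your steps (1), (2) and (5) are correct as stated: $L(y)\cap X^*=\{x_1,1-x_1\}$ by Lemma~\ref{lem:bijection} and symmetry, any further preimage lies in the base interval of a first-generation hump, and Lemma~\ref{lem:leading-humps} lets you pass to first-generation \emph{leading} humps. (Two small slips: generation $1$ means exactly \emph{one} index $j\le 2m$ with $D_j(x_0)=0$, namely $j=2m$, not two; and there are $C_{m-1}$, not $2C_{m-1}$, first-generation leading humps of order $m$.)

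The gap is step (4), and your only attempted verification of it is computed incorrectly in a way that would refute the plan if it were right. You claim $T(\frac14)=\frac14$ and $\pi_Y(H(1/4))=[\frac14,\frac{5}{12}]$; in fact $T(\frac14)=\phi(\frac14)+\frac12\phi(\frac12)=\frac12$, so $\pi_Y(H(1/4))=[\frac12,\frac23]$ (these are the two quarter-scale copies at the top of the graph). Were your values correct, $y=\frac13$ --- which has no $000$ block and is listed in the survey as a member of $S_2$ --- would lie in $\pi_Y(H(1/4))$, contradicting exactly what step (4) asserts. The statement you actually need, and which does hold, is: \emph{for every first-generation leading hump $H(x_0)$ of order $m\ge2$, the base level $T(x_0)$ is a positive integer multiple of $2^{-(2m-3)}$.} Indeed, a nonnegative walk with first return to $0$ at time $2m$ must end $D_{2m-2}=2$, $D_{2m-1}=1$, $D_{2m}=0$, forcing $\eps_{2m-1}=\eps_{2m}=1$; in the Tambs--Lyche formula \eqref{eq:Tambs-Lyche} the last two terms then contribute $3\cdot2^{-(2m-1)}+2\cdot2^{-2m}=2^{-(2m-3)}$, while every earlier term $(D_l+2)\,2^{-l}$ with $l\le 2m-2$ is an integer multiple of $2^{-(2m-3)}$ (for $l=2m-2$ use that $D_l$ is even). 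Since the hump has height $\frac23\,4^{-m}<2^{-2m}$, every $y\in\pi_Y(H(x_0))$ has a $1$ among its first $2m-3$ binary digits and $\eps_{2m-2}(y)=\eps_{2m-1}(y)=\eps_{2m}(y)=0$, i.e.\ a $000$ block strictly after its first $1$; for $m=1$ one checks directly that $\pi_Y(H(1/4))=[\frac12,\frac23]$ misses $(0,\frac12)$. With this lemma inserted, your proof closes; without it, the heart of the argument is missing.
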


Thus, for instance, the set $S_2$ includes the points $y=1/3=0.(01)^\infty$, $y=1/7=0.(001)^\infty$, and $y=1/40=0.0^5(1100)^\infty$. 

The second condition, which is neither weaker nor stronger than the first, involves the orbit of $y$ under iteration of the map
\begin{equation*}
\Psi(x)=\begin{cases}
0, & \mbox{if $y=0$}\\
4\left(y-\frac{k}{2^k}\right), & \mbox{if} \quad \frac{k}{2^k}\leq y<\frac{k-1}{2^{k-1}}, \quad k=3,4,\dots.
\end{cases}
\end{equation*}
Note that $\Psi$ maps $[0,\frac12)$ onto itself. Let $\Psi^n$ denote the $n$th iterate of $\Psi$, with $\Psi^0(y):=y$.
For $n=0,1,2,\dots$, let $k_n$ be that number $k\geq 3$ for which $k/2^k\leq \Psi^n(y)<(k-1)/2^{k-1}$, or put $k_n=\infty$ if $\Psi^n(y)=0$.

\begin{theorem}[Allaart 2011] \label{thm:double-at-most}
Let $0\leq y<\frac12$. If $k_{n+1}\leq 2k_n$ for every $n$, then $|L(y)|=2$.
\end{theorem}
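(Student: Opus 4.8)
The plan is to understand the equation $T(x)=y$ recursively, exactly as the map $\Psi$ is built. Fix $y$ with $0\le y<\tfrac12$ and suppose $k_{n+1}\le 2k_n$ for all $n$. The starting observation is that $y<\tfrac12$ forces every solution $x$ of $T(x)=y$ to lie outside the central quarternary interval where $T$ climbs to its maximum; more precisely, using the functional equation \eqref{eq:takagi FA} and the symmetry of $T$, one shows that if $T(x)=y$ with $k/2^k\le y<(k-1)/2^{k-1}$ for some $k\ge 3$, then $x$ must begin with $k$ identical binary digits (either $k$ zeros or, by symmetry, $k$ ones), and after stripping off that common prefix the residual point $x'$ satisfies $T(x')=\Psi(y)$. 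This is the step that ties the combinatorial map $\Psi$ to the geometry: iterating \eqref{eq:takagi FA} through the $k$ leading digits contributes a fixed linear amount $\tfrac1q\sum p_j$–style term and rescales the remaining graph by $4^{-\lfloor k/2\rfloor}$ or so, and the condition $k/2^k\le y<(k-1)/2^{k-1}$ is precisely what pins down how many leading digits are forced.

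With that reduction in hand, I would set up the counting inductively. Let $N(y)$ denote the number of solutions of $T(x)=y$ in $[0,1]$ (allowing $N(y)=\infty$). By the symmetry $x\mapsto 1-x$ the solution set splits into those beginning with a block of zeros and those beginning with the mirrored block of ones, so $N(y)=2\,N'(\Psi(y))$ where $N'$ counts solutions in the appropriate half-interval after one application of the reduction — except that one has to be careful at the ``seam'' (the point $\tfrac12$ and the finitely many solutions that could sit at the boundary of the forced block). The heart of the argument is to show that the hypothesis $k_{n+1}\le 2k_n$ prevents any branching: each reduction step, applied to an ordinate whose next ``level'' $k_{n+1}$ is not too large relative to $k_n$, leaves a graph on the residual interval whose horizontal line at the (rescaled) height hits it in a controlled way, so that no extra humps are created to be sliced through. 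Concretely I expect to prove that under $k_{n+1}\le 2k_n$ the rescaled ordinate at each stage stays below the height at which a first-generation hump of the residual graph would appear, which by Lemma \ref{lem:invariance} and the hump count of Lemma \ref{lem:hump-count} keeps the number of preimages from growing. Passing to the limit, the only solutions are the two ``extreme'' ones obtained by always taking the all-zeros block (giving the smallest $x$) and always the all-ones block (giving $1-x$), whence $|L(y)|=2$. If at some stage $\Psi^n(y)=0$ the recursion terminates with $k_n=\infty$ and the residual equation is $T(x')=0$, whose only solutions are $0$ and $1$; this is consistent with $|L(y)|=2$ and must be handled as a base case.

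The main obstacle, I expect, is precisely the quantitative inequality that converts $k_{n+1}\le 2k_n$ into ``the rescaled line misses all first-generation humps of the residual graph.'' One has to compare the exact height $\tfrac12(\tfrac14)^m$ of a hump of order $m$ against the residual ordinate produced after peeling off a block of length $k_n$, track how the linear ``offset'' term from \eqref{eq:takagi FA} accumulates across stages, and verify the bound is tight enough; the factor $2$ in $k_{n+1}\le 2k_n$ should correspond exactly to the ratio between the base-$2$ shrinking of the interval and the base-$4$ shrinking of the hump heights. A secondary technical point is bookkeeping at the seams: showing that the finitely many ``boundary'' solutions that \emph{a priori} could arise when $y$ sits exactly at a breakpoint $k/2^k$ either do not occur or coincide with the two extreme solutions already counted. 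Both of these are calculations rather than conceptual hurdles, and the full details are in \cite[Section 4.2]{Allaart5}.
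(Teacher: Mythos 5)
Your central reduction step is false, and it is the step on which everything else rests. You claim that if $T(x)=y$ with $\frac{k}{2^k}\le y<\frac{k-1}{2^{k-1}}$ then every solution $x$ must begin with $k$ identical binary digits, and that stripping this prefix turns the equation into $T(x')=\Psi(y)$. Neither half is true. Iterating \eqref{eq:takagi FA} through $m$ leading zeros gives $T(x)=2^{-m}\bigl(T(2^m x)+m\cdot 2^m x\bigr)$, so the residual equation is $T(u)+mu=2^m y$: the horizontal line becomes a line of slope $-m$, not another level set. The shear vanishes only over \emph{balanced} dyadic prefixes (Lemma \ref{lem:similar-copies}), which is exactly why the argument in \cite{Allaart5} is organized around humps rather than around blocks of equal digits. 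Concretely, $y=\frac18$ lies in $[\frac{6}{2^6},\frac{5}{2^5})$, so $k_0=6$, and $\frac18$ is a fixed point of $\Psi$, hence covered by the theorem; but on $[0,2^{-6}]$ one has $T(x)=2^{-6}T(2^6x)+6x\le 2^{-6}(\frac23+6)=\frac{5}{48}<\frac18$, so \emph{no} solution of $T(x)=\frac18$ begins with six zeros. Indeed $T\bigl(\frac1{48}\bigr)=2^{-5}T\bigl(\frac23\bigr)+\frac5{48}=\frac1{48}+\frac5{48}=\frac18$, and $\frac1{48}=0.0000010101\dots$ in binary has only five leading zeros; moreover $T\bigl(2^5\cdot\frac1{48}\bigr)=\frac23\neq\Psi\bigl(\frac18\bigr)=\frac18$.

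Second, even if the recursion were set up correctly, the entire content of the theorem is the quantitative implication that you only announce: that $k_{n+1}\le 2k_n$ forces the rescaled ordinate at every stage to clear the tops of the relevant first-generation humps --- in the notation of Theorem \ref{thm:when-just-two}, that $\Phi(\Psi^n(y))>\frac23$ for all $n\ge 0$. You write that you ``expect to prove'' this and then refer to \cite[Section 4.2]{Allaart5} for ``the full details''; but that inequality \emph{is} the proof, not a routine calculation to be outsourced. The intended route is the one the survey indicates: take the exact characterization of Theorem \ref{thm:when-just-two} (whose proof rests on the hump decomposition via Lemmas \ref{lem:similar-copies}, \ref{lem:hump-count} and \ref{lem:bijection}) as the starting point, and then carry out the arithmetic deriving $\Phi(\Psi^n(y))>\frac23$ from $k_{n+1}\le 2k_n$. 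As it stands, your proposal neither establishes that characterization correctly nor performs that derivation.
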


This gives many more examples. For instance, the condition of the theorem obviously holds for the fixed points of $\Psi$, which are $y_k^*:=k/(3\cdot 2^{k-2})$, $k\geq 4$. Perhaps unexpectedly, this shows that infinitely many dyadic rational ordinates belong to $S_2$, the first three being $1/8$, $3/2^7$ and $1/2^8$. More generally, of course, many of the periodic points of $\Psi$ satisfy $k_{n+1}\leq 2k_n$ and are therefore in $S_2$. For instance, $y=1/11$ does not satisfy the ``no 3 zeros" condition of Theorem \ref{thm:no-3-zeros}, but it has $(k_n)_{n\geq 0}={(7,6,5,5,6,5,5,4,4,4)}^\infty$, and since $7\leq 2\cdot 4$, Theorem \ref{thm:double-at-most} yields $1/11\in S_2$.

Allaart \cite{Allaart5} actually gives a slightly weaker condition than the one given in Theorem \ref{thm:double-at-most}, which includes lower order terms, and also gives an accompanying necessary condition in terms of the sequence $(k_n)$. Together these conditions cover most cases, but still leave a small gap.

Theorems \ref{thm:no-3-zeros} and \ref{thm:double-at-most} are corollaries to the following, precise but somewhat abstract, characterization of membership in $S_2$.

\begin{theorem}[Allaart 2011] \label{thm:when-just-two}
Let $0<y<\frac12$. Then $|L(y)|=2$ if and only if
\begin{equation*}
\Phi(\Psi^n(y))>\frac23 \qquad\mbox{for all $n\geq 0$},
\end{equation*}
where
\begin{equation*}
\Phi(y):=\begin{cases}
0, & \mbox{if $y=0$},\\
4^k(y-t_k), & \mbox{if} \quad \frac{k}{2^k}\leq y<\frac{k-1}{2^{k-1}}, \quad k=3,4,\dots.
\end{cases}
\end{equation*}
\end{theorem}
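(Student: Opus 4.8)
The plan is to reduce the problem, via Lemma~\ref{lem:bijection} and the self-affinity of $T$, to a question about which first-generation humps (Definition~\ref{def:humps}) are met by the horizontal line $l_y=\{(x,y):x\in[0,1]\}$, and then to recognise the answer as the orbit condition $\Phi(\Psi^n(y))>\tfrac23$. Fix $0<y<\tfrac12$. By Lemma~\ref{lem:bijection} and the symmetry $T(x)=T(1-x)$, exactly two solutions of $T(x)=y$ lie in $X^*$ — a point $x_-(y)\in X^*\cap[0,\tfrac12)$ and its mirror $1-x_-(y)$ — and every other solution lies in the interior of some first-generation hump interval $I(x_0)$, $x_0\in\BB$ with $x_0$ of generation $1$. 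Such an interval is contained in $[0,\tfrac12]$ exactly when $x_0<\tfrac12$, which is precisely the case for the \emph{leading} first-generation humps, so by the reflection symmetry it suffices to look at those. Lemma~\ref{lem:similar-copies} then says that $l_y$ meets the interior of a first-generation leading hump $H(x_0)$ of order $m$ precisely when $T(x_0)<y<T(x_0)+\tfrac23 4^{-m}$, i.e. when $0<4^m(y-T(x_0))<\tfrac23$, in which case $H(x_0)$ adds exactly $|L(4^m(y-T(x_0)))|\ge 2$ solutions. Hence $|L(y)|=2$ if and only if $l_y$ meets no first-generation leading hump in its interior. The ``degenerate'' ordinates, $y=T(x_0)$ (for which $L(y)$ is countably infinite — these are $T$-images of dyadic abscissas) and $y=T(x_0)+\tfrac23 4^{-m}$ (a local-maximum value, for which $L(y)$ is uncountable by Theorem~\ref{thm:S-uncountable}), I would set aside and verify separately; for each of them the orbit $\Psi^n(y)$ reaches either a value $j2^{-j}$ (where $\Phi=0$) or a local-maximum value (where $\Phi=\tfrac23$), so that $\Phi(\Psi^n(y))\le\tfrac23$ for some $n$, consistent with the characterisation.

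The core of the argument is to organise the first-generation leading humps by base height and to localise $y$ in the base-$2$ block $[k/2^k,(k-1)/2^{k-1})$, $k\ge 3$, containing it. A direct computation from \eqref{eq:Takagi-dyadic} (or from \eqref{eq:D-expression}) gives that the dyadic rational whose binary expansion is $k$ zeros followed by $k$ ones has $T$-value $k2^{-k}$, so the corresponding hump $H_k$ of order $k$ sits with its base at the bottom $k2^{-k}$ of this block, and $l_y$ meets its interior exactly when $0<\Phi(y)=4^k(y-k2^{-k})<\tfrac23$. One shows that $H_k$ is the deepest obstruction in the block: if $\Phi(y)\le\tfrac23$ then $l_y$ meets $H_k$ (or its base or apex) and $|L(y)|>2$; while if $\Phi(y)>\tfrac23$, then $l_y$ has cleared $H_k$ and — since $\tfrac23 4^{-k}$ is negligible beside the block length — also every other first-generation leading hump whose base is $\le k2^{-k}$, so the only remaining possible obstructions have base strictly above $k2^{-k}$. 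By the self-affinity of $T$, the part of $\GG_T$ over the remainder of the block lying above level $k2^{-k}$ is an exact rescaled copy of an initial portion of $\GG_T$, the ordinate rescaling being the affine map $y\mapsto\Psi(y)=4(y-k2^{-k})$; hence $l_y$ meets a further first-generation leading hump if and only if $l_{\Psi(y)}$ meets one in the rescaled picture. Since $\Psi$ maps $[0,\tfrac12)$ into itself, I would then iterate: $|L(y)|=2$ if and only if, for every $n\ge 0$, the line $l_{\Psi^n(y)}$ has cleared the deepest hump of the block containing $\Psi^n(y)$, that is, $\Phi(\Psi^n(y))>\tfrac23$. Theorems~\ref{thm:no-3-zeros} and~\ref{thm:double-at-most} then come out by translating this orbit condition into conditions on the binary digits of $y$ and on the sequence $(k_n)$, using that $\Phi(y)$ exceeds $\tfrac23$ comfortably unless $y$ lies very near the bottom of its block.

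The main obstacle is the self-affine reduction step — showing that, once $l_y$ has cleared $H_k$, the \emph{whole} remaining family of first-generation leading humps that could obstruct $l_y$ (humps of arbitrarily large order accumulate at every relevant base height, so infinitely many are in play) is reproduced, with the correct orders and base heights, by the single rescaled problem for $\Psi(y)$. Making this precise means tracking which generation-$1$ balanced dyadic rationals fall in the block, how the horizontal rescaling dual to $\Psi$ acts on their binary expansions, and checking that the closed-interval overlaps between adjacent hump intervals, and the dyadic and local-maximum ordinates, introduce no spurious solutions and lose none. The other ingredients — the count of the two base solutions, the single-hump contribution formula, the evaluation of $T$ on the string $k$ zeros followed by $k$ ones, and the final passage to Theorems~\ref{thm:no-3-zeros} and~\ref{thm:double-at-most} — are routine given Lemmas~\ref{lem:bijection}, \ref{lem:similar-copies} and~\ref{lem:hump-count}.
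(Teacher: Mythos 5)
The survey states Theorem~\ref{thm:when-just-two} without proof, deferring to \cite{Allaart5}, so your proposal can only be measured against the machinery the survey sets up; by that standard your architecture is the right one. The reduction of $|L(y)|=2$ to ``$l_y$ meets no first-generation leading hump in its interior'' via Lemmas~\ref{lem:bijection} and~\ref{lem:leading-humps} and the reflection symmetry is sound, your evaluation $T(0.0^k1^k)=k2^{-k}$ is correct, and your separate treatment of hump-base and hump-apex ordinates is consistent with Theorems~\ref{thm:S-uncountable} and~\ref{thm:buczo}. But one load-bearing step is justified by the wrong reason. That $\Phi(y)>\tfrac23$ forces $l_y$ to clear \emph{every} first-generation leading hump with base at or below $k2^{-k}$ does not follow from ``$\tfrac23 4^{-k}$ is negligible beside the block length'': a hump of small order $m$ has height $\tfrac23 4^{-m}\gg \tfrac23 4^{-k}$, and if such a hump could have its base at or just below $k2^{-k}$, its apex would tower over the apex of $H_k$ and obstruct $l_y$ even though $\Phi(y)>\tfrac23$. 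What you actually need is the extremal lemma that a first-generation leading hump of order $m$ has base height at least $m2^{-m}$ (equivalently: any first-generation leading hump based at or below $k2^{-k}$ has order at least $k$, hence height at most $\tfrac23 4^{-k}$). This is true, with the minimum attained exactly by your $H_m$, but it is a genuine statement about the values of $T$ at the $C_{m-1}$ Dyck-path--coded points of $\BB_1$ of order $m$ and must be proved; nothing in your write-up supplies it.

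The step you flag as the main obstacle --- reproducing the remaining humps of block $k$ by a single rescaled copy --- can in fact be made exact, and more cheaply than your description suggests. Iterating \eqref{eq:takagi FA} gives $T(x)=jx+2^{-j}T(2^jx)$ for $x\in[0,2^{-j}]$, and applying this with $j=k-1$ on one side and $j=k-2$ on the other yields the similarity
\begin{equation*}
T(x)\;=\;\frac{k}{2^k}+\frac14\,T\bigl(4(x-2^{-k})\bigr),\qquad 2^{-k}\le x\le 2^{-(k-1)},\quad k\ge 2,
\end{equation*}
so the portion of $\GG_T$ over the \emph{abscissa} interval $[2^{-k},2^{-(k-1)}]$ is a $\tfrac14$-scale similar copy of the portion over $[0,2^{-(k-2)}]$, with ordinates transformed exactly by $\Psi$. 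Since $T$ is increasing off the open first-generation hump intervals on $[0,\tfrac12]$ and $T(2^{-k})=k2^{-k}$, every first-generation leading hump whose base lies strictly inside block $k$ has its interval contained in $(2^{-k},2^{-(k-1)}]$ and is carried by this similarity to a first-generation leading hump of order one less with base $\Psi(\cdot)$, while the humps based exactly at $k2^{-k}$ (including $H_k$, whose interval $[2^{-k}-4^{-k},2^{-k}]$ sits just to the left of $2^{-k}$) all have order $\ge k$ by the lemma above and are absorbed into the $\Phi$-check. With that identity your induction closes, and it also furnishes an inductive proof of the missing minimality lemma; without these two ingredients the proposal is a correct plan rather than a proof.
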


\subsection{Hausdorff dimension}

Recall from Section \ref{sec:graph} that the maximum value of $T$ is $2/3$, and $L(2/3)$ is a Cantor set of Hausdorff dimension $1/2$. One might ask whether there exist any level sets with Hausdorff dimension strictly greater than $1/2$. This question was first addressed by Maddock \cite{Maddock}, who proved that the intersection of the graph of $T$ with {\em any} line of integer slope has Hausdorff dimension at most $0.668$. In particular, $0.668$ is an upper bound for the dimensions of the level sets. Maddock himself conjectured that the real maximum is $1/2$. The issue was finally settled by de Amo et al. \cite{ABDF}.

\begin{theorem}[de Amo et al. 2011]
For each ordinate $y$, the box-counting dimension of $L(y)$ is at most $1/2$, and hence, $\dim_H L(y)\leq 1/2$.
\end{theorem}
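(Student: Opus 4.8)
The plan is to prove the statement by bounding the box-counting dimension of the level set $L(y)$ using a covering argument at dyadic scales $2^{-n}$, and then invoking the general fact that Hausdorff dimension is at most box-counting dimension. The key combinatorial object to count is the number $N_n(y)$ of dyadic intervals $[k/2^n,(k+1)/2^n]$ over which the graph of $T$ passes through height $y$. Because each $L(y)\cap[k/2^n,(k+1)/2^n]$ is contained in such an interval, we have $\dim_B L(y)\le\limsup_n \frac{\log N_n(y)}{n\log 2}$, so it suffices to show $N_n(y)\le C\cdot 2^{n/2}$ uniformly in $y$ (or at least that the growth exponent is $\le 1/2$).

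First I would set up the recursion. Recall from Lemma~\ref{lem:similar-copies} and the hump machinery of Section~\ref{sec:graph} that the restriction of $T$ to a dyadic interval of order $n$ is an affine image of $T$ itself, with the linear part determined by $D_n$ evaluated at the left endpoint. So whether the horizontal line at height $y$ meets the graph over a given dyadic interval of order $n$ is governed by a two-sided random-walk-type quantity: using the expression \eqref{eq:D-expression}, or more directly the piecewise-linear partial sums $T_n$ with slopes $D_n$, the values of $T$ at dyadic endpoints of order $n$ together with the residual oscillation (bounded by $\tfrac{2}{3}4^{-\lceil n/2\rceil}$ or so) control the crossing. The heart of the matter is to estimate, for each $n$, how the count of "active" intervals propagates when $n$ increases by $2$ (it is natural to step by $2$, since balanced dyadic rationals and humps live at even orders). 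One splits each order-$n$ interval into four order-$(n+2)$ subintervals; on each, $T$ gets a new affine piece, and the line at height $y$ survives on a subinterval only if the corresponding shifted target height lies in $[0,2/3]$ and, more restrictively, in the image of the relevant sub-hump. The combinatorics of which of the four children stay active is exactly the combinatorics that produces Catalan-type, i.e. $\binom{2m}{m}\sim 4^m/\sqrt m$, counts rather than $4^m$; the factor-of-$\sqrt{\phantom{m}}$ savings is what upgrades the trivial bound $\dim\le 1$ to $\dim\le 1/2$.

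The cleanest route, which I would follow, is to track a transfer operator / generating-function argument: assign to each order-$n$ dyadic interval meeting height $y$ a "state" recording the residual target $\Phi(\Psi^{\bullet}(\cdot))$-type coordinate (cf. the maps $\Psi$ and $\Phi$ of Section~\ref{subsec:finite}), show that from any state at most two of the four children are active unless the state lies in a small exceptional region, and in the exceptional region bound the total contribution separately. Summing the worst-case branching gives $N_{n}(y)\le C\binom{n}{\lfloor n/2\rfloor}\le C' 2^{n}/\sqrt n$ — which is not yet good enough — so the refinement is that at each doubling step the number of active children is on average strictly below $2$, forced by the constraint that the residual heights must stay inside nested sub-humps whose heights shrink like $4^{-m}$; this is precisely the $C_m (1/4)^m$ summability exploited in the proof of Theorem~\ref{thm:buczo}. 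Carrying the bookkeeping carefully yields growth exponent $\tfrac12\log_2 4 \cdot \tfrac12 = \tfrac12$ — more honestly, one shows $N_n(y)\le C\, 2^{n/2}\cdot\mathrm{poly}(n)$, whence $\dim_B L(y)\le 1/2$.

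The main obstacle, and where all the real work sits, is establishing the sharp branching bound uniformly in $y$: showing that the "crossing" combinatorics at each stage is genuinely Catalan-constrained and not merely binomial, and handling the boundary/exceptional states (heights $y$ near $2/3$, or near local maximum values, or dyadic $y$) where more than two children can stay active. For $y=2/3$ one already knows the answer is exactly $1/2$ (Theorem~\ref{thm:maxima}), which is a useful consistency check and suggests the extremal configuration the bound must accommodate. I expect de Amo et al. in fact prove the box-counting bound by an explicit finite-state automaton: enumerating the finitely many "shapes" a crossing sub-hump can have, writing down the $k\times k$ nonnegative transition matrix, and showing its spectral radius is exactly $2$ (so that $N_n$ grows like $2^{n/2}$ per doubling-of-$n/2$, i.e. like $2^{n/2}$ overall). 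Bounding that spectral radius — equivalently, verifying the relevant Perron eigenvalue is $\le 2$ — is the crux; everything else (the reduction to box dimension, the covering estimate, the passage to Hausdorff dimension) is routine.
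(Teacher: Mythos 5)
Your high-level framework is the right one, and in fact it matches the little that this survey records about the argument of de Amo et al.\ (the survey does not reproduce their proof; it only notes that it is elementary, exploits the self-affinity of the graph, and rests on a ``cleverly devised induction''): cover $L(y)$ by the order-$n$ dyadic intervals it meets, use self-affinity to set up a recursion for the count $N_n(y)$, aim for $N_n(y)\leq C\,2^{n/2}$ up to subexponential factors, and finish with $\dim_H\leq\dim_B$. But as written this is a plan rather than a proof: the entire content is delegated to the ``sharp branching bound,'' which you explicitly defer, and the specific mechanism you propose for it is not correct. The claim that ``from any state at most two of the four children are active unless the state lies in a small exceptional region'' already fails at the very first stage: one computes $\max_{[0,1/4]}T=\tfrac{13}{24}$ (from $T(x)=2x+\tfrac14T(4x)$ on $[0,\tfrac14]$), while $T$ ranges over the full interval $[\tfrac12,\tfrac23]$ on each of $[\tfrac14,\tfrac12]$ and $[\tfrac12,\tfrac34]$; hence for every $y\in(\tfrac12,\tfrac{13}{24})$ the horizontal line at height $y$ meets the graph over all four quarters of $[0,1]$. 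This is a set of ordinates of positive measure, not a negligible exceptional region, and the same over-branching recurs inside every hump at every scale. The whole difficulty of the theorem is precisely to show that such $4$-fold branching cannot persist on average, and nothing in your sketch forces that.

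A second, related gap is that your proposed finite-state automaton cannot be finite. Over a dyadic interval of order $n$ the graph of $T$ is a copy of $\mathcal{G}_T$ scaled by $2^{-n}$ and \emph{sheared} by the integer slope $D_n$ of $T_n$, and $|D_n|$ is unbounded as $n\to\infty$. So the ``state'' of an active interval must record at least the pair (residual target height, integer shear slope), and the quantity you actually need to control is the number of order-$n$ dyadic intervals in which $\mathcal{G}_T$ meets a line of arbitrary integer slope --- exactly the object Maddock bounded by $0.668$ and that de Amo et al.\ handle by an induction carried out jointly in $n$ and the slope. A transfer matrix on this state space is infinite, and ``verifying the Perron eigenvalue is $\leq 2$'' is not a finite check; moreover your heuristic $N_n(y)\leq C\binom{n}{\lfloor n/2\rfloor}$ and the subsequent ``growth exponent $\tfrac12\log_2 4\cdot\tfrac12$'' computation are not derivations. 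None of this means the approach is unworkable --- it is essentially the published one --- but the crux (a uniform-in-$y$, uniform-in-slope inductive estimate showing the branching over two binary generations is at most $2$ on average) is exactly what is missing, so the proposal cannot be accepted as a proof.
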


The proof, which is surprisingly elementary, makes good use of the self-affinity of the graph of $T$ and uses a cleverly devised induction argument.

\bigskip
A direct consequence of Theorem \ref{thm:buczo} is that almost all level sets (in the Lebesgue sense) have Hausdorff dimension zero. On the other hand, Lagarias and Maddock \cite{LagMad2} have shown that the set
\begin{equation*}
\{y: \dim_H L(y)>0\}
\end{equation*}
has full Hausdorff dimension one. This is accomplished by putting a sequence of subsets of the range $[0,\frac23]$ in one-to-one bi-Lipschitz correspondence with certain well-behaved subsets of the domain $[0,1]$ whose Hausdorff dimension is easy to calculate and gets arbitrarily close to 1.

It is not known exactly which numbers occur as the Hausdorff dimension of some level set of $T$.

\subsection{Local level sets} \label{subsec:local}

Lagarias and Maddock \cite{LagMad1,LagMad2} introduce the concept of a local level set of the Takagi function. They first define an equivalence relation on $[0,1]$ by
\begin{equation}
x\sim x'\quad \stackrel{\defin}{\Longleftrightarrow}\quad |D_j(x)|=|D_j(x')|\ \mbox{for each $j\in\NN$}.
\label{eq:equivalence-relation}
\end{equation}
The {\em local level set} containing $x$ is defined by 
\begin{equation*}
L_x^{loc}:=\{x': x'\sim x\}.
\end{equation*}
Note that by Lemma \ref{lem:invariance}, $x\sim x'$ implies $T(x)=T(x')$, so each local level set is contained in some level set. Lagarias and Maddock point out that each local level set is either finite or a Cantor set. Members of the same local level set can be obtained from one another by simple operations on their binary expansions, called ``block flips" in \cite{LagMad1}. This works as follows. Let $Z(x)=\{n\geq 0: D_n(x)=0\}\cup\{\infty\}$. 
For any two elements $k,l\in Z(x)$ with $k<l\leq\infty$, form the point $x'$ with binary expansion $x'=\sum_{n=1}^\infty 2^{-n}\eps_n'$ by setting $\eps_n'=\eps_n$ if $n\leq k$ or $n>l$, and $\eps_n'=-\eps_n$ if $k<n\leq l$. Then $|D_n(x')|=|D_n(x)|$ for each $n$, and so $x'\in L_x^{loc}$. Every element of $L_x^{loc}$ can be obtained from $x$ by at most countably many operations of this type.

One of the results in \cite{LagMad1} concerns the average number of local level sets contained in a level set chosen at random. Let $N^{loc}(y)$ denote the number of local level sets contained in $L(y)$. 

\begin{remark}
{\rm
Lagarias and Maddock \cite{LagMad1} define $L_x^{loc}$ slightly differently, effectively viewing local level sets as subsets of the Cantor space $\{0,1\}^\NN$. However, this distinction does not affect the number of local level sets contained in any level set, which is all we are concerned with in this survey.
}
\end{remark}

\begin{theorem}[Lagarias and Maddock, 2010] \label{thm:local-level-sets}
The expected number of local level sets contained in a level set $L(y)$ with $y$ chosen at random from $[0,\frac23]$ is $\frac32$. More precisely,
\begin{equation*}
\rE[N^{loc}(y)]:=\frac32\int_0^{2/3}N^{loc}(y)\,dy=\frac32.
\end{equation*}
\end{theorem}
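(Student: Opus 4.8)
The plan is to count each local level set by a single canonical point, and then to exploit the self-affinity of the graph to turn the count into a renewal identity.

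\emph{Canonical representatives.} First I would show that every local level set contains exactly one point $x$ with $D_n(x)\ge 0$ for all $n$. By the relation \eqref{eq:equivalence-relation}, a local level set consists of all points obtained from a given one by arbitrary block flips on the excursions of the walk $(D_n)$ between consecutive zeros (together with the terminal, possibly infinite, excursion); flipping exactly the \emph{negative} excursions yields a point $x^*$ with $D_n(x^*)\ge 0$ for every $n$, and since the sequence $(|D_n|)$ is constant on the class, $x^*$ is uniquely determined by the class. Writing $\mathcal{L}:=\{x\in[0,1]: D_n(x)\ge 0 \text{ for all }n\}$ for the set of these representatives and using Lemma \ref{lem:invariance}, the map ``class $\mapsto$ leading representative'' is a bijection onto $\mathcal{L}$ which carries the classes contained in $L(y)$ onto $\mathcal{L}\cap T^{-1}(y)$. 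Hence $N^{loc}(y)=|\mathcal{L}\cap T^{-1}(y)|$, and it remains to show $I:=\int_0^{2/3}|\mathcal{L}\cap T^{-1}(y)|\,dy=1$.

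\emph{Decomposition of $\mathcal{L}$ and the role of $X^*$.} Split $\mathcal{L}$ according to the first time $(D_n)$ returns to $0$. The points whose walk never returns form $\mathcal{L}_\infty:=\{x: D_n(x)\ge 1 \text{ for all }n\ge 1\}\subset[0,\tfrac12]$. Any other $x\in\mathcal{L}$ has a first return at some even time $2m$; then its first $2m$ digits form a balanced dyadic rational $x_0$ that is the address of a first-generation \emph{leading} hump $H(x_0)$ of order $m$, and for each such $x_0$ the set of $x\in\mathcal{L}$ with that prefix and first return $2m$ is exactly $x_0+4^{-m}\mathcal{L}$, on which $T(x_0+4^{-m}z)=T(x_0)+4^{-m}T(z)$ by Lemma \ref{lem:similar-copies}. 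There are $C_{m-1}$ first-generation leading humps of order $m$ (half of the $2C_{m-1}$ first-generation humps in Lemma \ref{lem:hump-count}, the ``flip all $2m$ digits'' involution pairing leading with non-leading ones, fixed-point-freely). The key point is the contribution of $\mathcal{L}_\infty$: I claim $\mathcal{L}_\infty$ coincides with $X^*\cap[0,\tfrac12]$ up to a countable set. Indeed, if $x\in X^*\cap[0,\tfrac12]$ then $(D_n(x))$ can never hit $0$, for a first zero would make an initial block of $x$ a generation-$1$ balanced dyadic rational and hence place $x$ inside a first-generation hump interval, contradicting $x\in X^*$; so $D_n(x)\ge 1$ for all $n$. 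Conversely a non-dyadic point of $\mathcal{L}_\infty$ lies in no closed first-generation hump interval, hence in $X^*$. Since $X^*$ is symmetric about $\tfrac12$ and $T(1-x)=T(x)$, Lemma \ref{lem:bijection} gives that $T$ is injective on $X^*\cap[0,\tfrac12)$ and that $T(\mathcal{L}_\infty)=T(X^*\cap[0,\tfrac12])=T(X^*)=[0,\tfrac12]$; consequently $|\mathcal{L}_\infty\cap T^{-1}(y)|=\mathbf{1}_{[0,1/2]}(y)$ for all but countably many $y$, so $\int_0^{2/3}|\mathcal{L}_\infty\cap T^{-1}(y)|\,dy=\tfrac12$.

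\emph{The renewal identity.} Putting the decomposition together, for every $y$,
\begin{equation*}
N^{loc}(y)=|\mathcal{L}_\infty\cap T^{-1}(y)|+\sum_{H}\mathbf{1}\big[y\in\pi_Y(H)\big]\,N^{loc}\big(4^{m(H)}(y-T(x_0(H)))\big),
\end{equation*}
the sum ranging over all first-generation leading humps $H$ (of order $m(H)$ and address $x_0(H)$). Integrating over $[0,\tfrac23]$, performing on each hump the affine substitution that rescales Lebesgue measure by $4^{m(H)}$, and using the previous paragraph together with $\sum_{m\ge 1}C_{m-1}4^{-m}=\tfrac14\sum_{k\ge 0}C_k4^{-k}=\tfrac14\cdot 2=\tfrac12$, one obtains $I=\tfrac12+\tfrac12 I$, hence $I=1$ and $\rE[N^{loc}(y)]=\tfrac32 I=\tfrac32$. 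The finiteness of $I$ needed to solve this is automatic once the recursion is unrolled: $N^{loc}(y)$ then appears as a sum over finite chains of nested first-generation leading humps, and integrating term by term produces the convergent geometric series $I=\tfrac12\sum_{k\ge 0}2^{-k}=1$ outright.

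\emph{Main obstacle.} The delicate part is the identification of $\mathcal{L}_\infty$ with $X^*\cap[0,\tfrac12]$ and the careful handling of the (Lebesgue-null, but genuinely nonempty) discrepancy at dyadic points---notably the right endpoints of first-generation hump intervals, which lie in $\mathcal{L}_\infty$ but not in $X^*$. Everything else is routine self-affinity bookkeeping together with the Catalan identity $\sum_n C_n 4^{-n}=2$ already recorded above.
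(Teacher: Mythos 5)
The survey states this theorem without proof (it defers to Lagarias and Maddock \cite{LagMad1} and to the ``simpler proof'' in \cite{Allaart4}), so there is no in-paper argument to measure you against; your proposal is, in substance, the route of that cited simpler proof, assembled from the survey's own ingredients. The reduction of $N^{loc}(y)$ to counting leading representatives (Lemma \ref{lem:invariance} plus the uniqueness of the all-nonnegative walk with a prescribed $|D_n|$-sequence), the identification of $\mathcal{L}_\infty$ with $X^*\cap[0,\tfrac12]$ up to a countable set and the resulting contribution $\tfrac12$ via Lemma \ref{lem:bijection} and symmetry, the count of $C_{m-1}$ first-generation \emph{leading} humps of order $m$ from Lemma \ref{lem:hump-count}(iii), and the renewal identity $I=\tfrac12+\tfrac12 I$ obtained from Lemma \ref{lem:similar-copies} and $\sum_{m\ge1}C_{m-1}4^{-m}=\tfrac12$ are all correct.

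The one genuine gap is the finiteness step. Your claim that, after unrolling, ``$N^{loc}(y)$ appears as a sum over finite chains of nested first-generation leading humps'' is false for uncountably many $y$: the leading representatives whose walk $(D_n)$ returns to $0$ infinitely often correspond to \emph{infinite} chains (these are exactly the Cantor-set local level sets, with $T$-images inside $\pi_Y(S)$ for the set $S$ of Section \ref{subsec:Hausdorff}; $y=\tfrac23$ is already an example). The unrolled series therefore only gives $I=1+c$, where $c$ is the contribution of the set $\mathcal{L}^{(\infty)}$ of infinitely-returning representatives, and the renewal identity by itself is consistent with $I=\infty$. You must show $c=0$, i.e.\ that $\lambda\bigl(T(\mathcal{L}^{(\infty)})\bigr)=0$. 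This is easy but has to be said: the representatives with at least $k$ returns lie under humps indexed by chains of length $k$, whose $\pi_Y$-images have total length $\tfrac23\bigl(\sum_{m\ge1}C_{m-1}4^{-m}\bigr)^k=\tfrac23\cdot2^{-k}\to0$; equivalently, the Borel--Cantelli estimate already used in the sketch of Theorem \ref{thm:buczo} shows that almost every $y$ lies in the $\pi_Y$-image of only finitely many leading humps, hence meets no infinite chain. With that one observation added, your argument is complete.
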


A simpler proof of this theorem is given by Allaart \cite{Allaart4}. In that paper, local level sets are also examined from the category point of view. The result is in marked contrast with the conclusion of Theorem \ref{thm:local-level-sets}. Define the sets
\begin{gather*}
S_\infty^{loc}:=\{y: L(y)\ \mbox{\rm contains infinitely many different local level sets}\},\\
S_\infty^{loc,uc}:=\{y: L(y)\ \mbox{\rm contains uncountably many different local level sets}\}.
\end{gather*}

\begin{theorem}[Allaart 2011]
\label{thm:infinite-local-level-sets}
(i) The set $S_\infty^{loc}$ is residual (co-meager) in $[0,\frac23]$.

(ii) The set $S_\infty^{loc,uc}$ is dense in $[0,\frac23]$, and intersects any subinterval of $[0,\frac23]$ in a continuum. 
\end{theorem}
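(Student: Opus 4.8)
The plan is to deduce both parts from the partial self-similarity of $\GG_T$ (Lemma \ref{lem:similar-copies}) together with one explicit combinatorial fact: there is, for each $N$, a ``tower'' of $N$ humps with pairwise distinct local-level-set fingerprints but with a common range-interval. Recall from \eqref{eq:equivalence-relation} that two points lie in the same local level set precisely when they have the same profile $(|D_n|)_n$. For part (i) I would show that for each $N$ the set $U_N:=\{y\in[0,\tfrac23]:L(y)\text{ contains at least }N\text{ local level sets}\}$ has dense interior; then $S_\infty^{loc}=\bigcap_N U_N\supseteq\bigcap_N\operatorname{int}(U_N)$ is residual. The key lemma is that the humps $H_i$ based at the words $00(10)^{i-2}11$ (and at $01$ for $i=1$) all have base height $\tfrac12$: a short computation from \eqref{eq:D-expression} shows that the base height of a hump of order $m$ depends only on $m$ and on $\sum_{k=1}^{2m-1}D_k^2\,2^{-k}$, and for these words that sum equals $2-6\cdot4^{-m}$, which pins the base height at $\tfrac12$; hence $\pi_Y(H_i)=[\tfrac12,\tfrac12+\tfrac23 4^{-i}]$, these ranges are nested, and $\bigcap_{i=1}^N\pi_Y(H_i)=[\tfrac12,\tfrac12+\tfrac23 4^{-N}]$. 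Their profiles $(1,2,1,2,\dots,1,2,1,0)$ (of length $2i$) are pairwise distinct, differing at position $2i$ when $i<j$. Then for $y$ in the open interval $(\tfrac12,\tfrac12+\tfrac23 4^{-N})$ one picks a preimage $x_i\in\pi_X(H_i)$ of $y$ for each $i$; since $x_i$ shares its first $2i$ binary digits with the base of $H_i$, the $x_i$ are pairwise inequivalent, so $y\in U_N$ and $(\tfrac12,\tfrac12+\tfrac23 4^{-N})\subseteq U_N$. Finally, copying this configuration inside an arbitrary hump $H(w)$, $w\in\BB$: since the balanced dyadic rationals are dense in $[0,1]$ and $T$ is continuous, $T(\BB)$ is dense in $[0,\tfrac23]$, so the intervals $\pi_Y(H(w))$ sit, arbitrarily short, inside any prescribed subinterval of $[0,\tfrac23]$; hence $\operatorname{int}(U_N)$ is dense.

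For part (ii) I would iterate the $N=2$ case of the lemma to produce, for a Cantor set $K$ of ordinates, a full binary tree of preimages at every level $y\in K$. Concretely, run a Cantor scheme of balanced dyadic intervals $J_\sigma$, $\sigma\in\{0,1\}^{<\NN}$, with $\overline{J_{\sigma0}}$, $\overline{J_{\sigma1}}$ disjoint subsets of $J_\sigma$; at each node first descend, using the density fact above, into a small sub-hump of $H(J_\sigma)$ whose range lies inside the current $y$-window, then apply the $N=2$ lemma inside that sub-hump to obtain children $H(J_{\sigma0}),H(J_{\sigma1})$ with profile-distinct bases and a common range-overlap interval of fixed relative width, which is the window handed down to both children; interleave this with a separate Cantor scheme carried out on the windows themselves, whose branches produce $K$. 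Along any infinite branch $\sigma$ of the $J$-tree the intervals $J_{\sigma|_n}$ shrink to a point $x_\sigma$ with $T(x_\sigma)$ equal to the level cut out by the matching window-branch, and two distinct $J$-branches split at a node into profile-distinct children, so their limit points are inequivalent. Hence each $y\in K$ has a Cantor set of pairwise inequivalent preimages, so $L(y)$ contains continuum-many local level sets, i.e.\ $K\subseteq S_\infty^{loc,uc}$. Copying the construction into $\pi_Y(H(w))$ for varying $w\in\BB$, together with density once more, shows $S_\infty^{loc,uc}$ is dense in $[0,\tfrac23]$ and meets every subinterval in a set containing such a Cantor set.

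The hard part is the explicit lemma in (i): humps of a \emph{fixed} order cannot be used, because distinct order-$m$ profiles have base heights that are farther apart than the common range width $\tfrac23 4^{-m}$, so one is forced into humps of rapidly increasing orders whose base heights must be computed exactly, and the tower $00(10)^{i-2}11$ is precisely what makes all those base heights coincide. Apart from that, the only delicate point is the bookkeeping in (ii) -- interleaving the level-Cantor-scheme with the per-level preimage-tree so that the two stay consistent and every chosen hump remains transversal to the relevant window -- which is routine given the lemma and the self-similarity of $\GG_T$, and is in essence the argument of Allaart \cite{Allaart4}.
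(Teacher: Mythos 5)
The survey states this theorem without proof, citing Allaart \cite{Allaart4} for the argument, so there is no in-paper proof to measure you against; I have therefore checked your proposal on its own terms. Part (i) is correct and nicely self-contained. Your tower is genuine: using \eqref{eq:Tambs-Lyche}, the word $00(10)^{i-2}11$ has its ones at positions $3,5,\dots,2i-3,2i-1,2i$, giving $T=\tfrac32\sum_{j=1}^{i-1}4^{-j}+2\cdot 4^{-i}=\tfrac12$ exactly, so the humps $H_i$ do share the base height $\tfrac12$ with nested ranges $[\tfrac12,\tfrac12+\tfrac23 4^{-i}]$; the profiles $(1,2,1,2,\dots,1,0)$ differ pairwise at position $2i$; and since any $y$ in the open common range has a preimage in the open base interval of each $H_i$ (which therefore inherits the first $2i$ digits of the base word), the $N$ preimages are pairwise inequivalent under \eqref{eq:equivalence-relation}. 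Transplanting the configuration into an arbitrary $H(w)$, $w\in\BB$, and using density of $T(\BB)$ in $[0,\tfrac23]$ makes $\operatorname{int}(U_N)$ dense, and the Baire argument finishes (i).

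The gap is in (ii), precisely at the point you dismiss as routine bookkeeping. As written, each node of the preimage tree manufactures its own ``common range-overlap interval'' and hands \emph{that} down as the window; but at level $n$ there are $2^n$ sibling humps attached to a single window-node, each descending into its own sub-hump $G_u$ with its own common range $R_{G_u}$, and these intervals have no reason to intersect. If the windows diverge across siblings, then distinct preimage-branches converge to preimages of \emph{different} ordinates, and you do not get continuum many inequivalent preimages of any single $y$ --- which is the whole content of (ii). The repair is to reconcile the siblings \emph{sequentially}: enumerate the $2^n$ humps $H_{t,u_1},\dots,H_{t,u_{2^n}}$, descend in $H_{t,u_1}$ to a sub-hump with range inside $\operatorname{int}V_t$ and record its common-range $R_1$, then descend in $H_{t,u_2}$ to a sub-hump with range inside $\operatorname{int}R_1$ to get $R_2\subseteq R_1$, and so on; only after obtaining $R_{2^n}$ do you choose the two disjoint next-level windows $V_{t0},V_{t1}\subseteq\operatorname{int}R_{2^n}$, which are then automatically contained in the ranges of both tower-children of every sibling. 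This works because the density of sub-hump ranges holds inside every hump by Lemma \ref{lem:similar-copies}, and with it the rest of your limit-and-inequivalence argument goes through; but it is an essential step, not an afterthought, and the proof is incomplete without it being made explicit.
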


\subsection{Open problems} \label{subsec:open}

A number of interesting questions about the level sets of the Takagi function remain open. We give a brief selection here, and refer to Lagarias \cite{Lagarias} for additional problems.

\begin{problem}
{\rm
Describe the set $S_\infty^{co}$ of ordinates $y$ with a countably infinite level set. Or less ambitiously, determine whether $S_\infty^{co}$ contains any points which are not the image of a dyadic rational.
}
\end{problem}

\begin{problem}
{\rm
(Knuth \cite[p.~32, Exer.~83]{Knuth}) Does there exist, for each rational ordinate $y\in[0,\frac23]$, a rational abscissa $x\in[0,1]$ such that $T(x)=y$? If $x$ is irrational but $y=T(x)$ is rational, must $L(y)$ be uncountable?
}
\end{problem}

\begin{problem}
{\rm
Determine the probability distribution of $|L(y)|$. In other words, find $\lambda(S_{2n})$ for each $n\in\NN$. This could be very difficult, but the weaker problem of showing that $\lambda(S_{2n})>0$ for every $n$ (or finding a counterexample) may be solvable.
}
\end{problem}

\begin{problem}
{\rm
Determine the probability distribution of $N^{loc}(y)$, the number of local level sets contained in $L(y)$. This too may be difficult.
}
\end{problem}

\begin{problem}
{\rm
The set $S_\infty^{loc,uc}$ intersects each subinterval of $[0,\frac23]$ in a continuum. Is it residual? Does it have Hausdorff dimension 1? Weaker than the last question, does $S_\infty^{loc}$ have Hausdorff dimension 1?
}
\end{problem}

\begin{problem}
{\rm
(Lagarias \cite{Lagarias}) Determine the dimension spectrum of $T$. That is, determine the function
\begin{equation*}
f(\alpha):=\dim_H \{y: \dim_H L(y)\geq\alpha\}.
\end{equation*}
}
\end{problem}

\section{Generalizations} \label{sec:generalizations}

\subsection{The Takagi-van der Waerden functions} \label{subsec:vdW}

An immediate generalization of the Takagi function is the sequence of functions
\begin{equation*}
f_r(x):=\sum_{n=0}^\infty \frac{1}{r^n}\phi(r^n x),  \qquad r=2,3,\dots.
\end{equation*}
Thus, $f_2$ is the Takagi function and $f_{10}$ is van der Waerden's function. Billingsley's argument (with only trivial modifications) shows that each $f_r$ is nowhere differentiable. Van der Waerden's original and elegant proof for the case $r=10$ works for all even $r\geq 4$, but not for odd $r$ or for $r=2$.

Each $f_r$ is also H\"older continuous of any order $\alpha<1$. This follows from the more general result of Shidfar and Sabetfakhri \cite{Shidfar2}, but is also easy to prove directly. In fact, by analogy with \eqref{eq:max-oscillations}, we have
\begin{equation*}
-1\leq\liminf_{h\to 0} \frac{f_r(x+h)-f_r(x)}{h\log_r(1/|h|)}\leq \limsup_{h\to 0} \frac{f_r(x+h)-f_r(x)}{h\log_r(1/|h|)}\leq 1.
\end{equation*}
It seems likely that $f_r$ has an infinite derivative at many points, but we have not found any detailed study of this question in the literature.

Generalizing Kahane's result \cite{Kahane}, Baba \cite{Baba} determines for each $r\geq 2$ the maximum value $M_r$ of $f_r$ and the set of points $E_r=\{x\in[0,1]: f_r(x)=M_r\}$.

\begin{theorem}[Baba 1984]
(i) If $r$ is odd, then $E_r=\{1/2\}$ and $M_r=r/(2r-2)$.

(ii) If $r$ is even, then $E_r$ is a Cantor set of dimension $1/2$, and $M_r=r^2/(2r^2-2)$.
\end{theorem}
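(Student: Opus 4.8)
The plan is to follow the pattern of the proof of Theorem~\ref{thm:maxima}.

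For odd $r$ the argument is immediate. Since $0\le\phi\le\tfrac12$ everywhere, one has
\begin{equation*}
f_r(x)\le\frac12\sum_{n=0}^\infty \frac{1}{r^n}=\frac{r}{2(r-1)},
\end{equation*}
with equality precisely when $\phi(r^nx)=\tfrac12$ for every $n\ge0$. The case $n=0$ forces $x=\tfrac12$, and when $r$ is odd the number $r^n/2$ is a half-integer for every $n$, so the remaining equations are then automatic. Hence $M_r=r/(2r-2)$ and $E_r=\{1/2\}$, which is part~(i).

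For even $r$ the plan is to pair consecutive terms. Put $g(x):=\phi(x)+\tfrac1r\phi(rx)$ and $R:=r^2$, so that $f_r(x)=\sum_{k=0}^\infty R^{-k}g(R^kx)$. The first step is to determine $\max g$. A routine piecewise-linear computation on $[0,\tfrac12]$ shows that $g$ is non-decreasing there, with $g<\tfrac12$ on $[0,\tfrac{r-1}{2r})$ and $g\equiv\tfrac12$ on $[\tfrac{r-1}{2r},\tfrac12]$; since $g(1-x)=g(x)$ and $g$ has period $1$, it follows that $g(y)=\tfrac12$ if and only if $y\bmod1$ lies in the interval $J:=[\tfrac{r-1}{2r},\tfrac{r+1}{2r}]$ (of length $1/r$, centered at $\tfrac12$), and $g<\tfrac12$ otherwise. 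Therefore
\begin{equation*}
f_r(x)\le\frac12\sum_{k=0}^\infty\frac{1}{R^k}=\frac{R}{2(R-1)}=\frac{r^2}{2r^2-2},
\end{equation*}
with equality exactly when $R^kx\bmod1\in J$ for every $k\ge0$; in particular $M_r=r^2/(2r^2-2)$.

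It remains to identify $E_r=\{x:R^kx\bmod1\in J\text{ for all }k\ge0\}$ and compute its dimension. Writing $x=(0.e_1e_2e_3\dots)_R$ in base $R=r^2$, one has $R^kx\bmod1=(0.e_{k+1}e_{k+2}\dots)_R$, so the requirement is that every base-$R$ tail of $x$ lie in $J$. Since $\tfrac{r-1}{2r}=\big(\tfrac{r^2-r}{2}\big)/r^2$ and $\tfrac{r+1}{2r}=\big(\tfrac{r^2+r}{2}\big)/r^2$, a short analysis of the leading digit shows that this holds if and only if every $e_i$ belongs to the set
\begin{equation*}
S:=\Big\{\tfrac{r^2-r}{2},\ \tfrac{r^2-r}{2}+1,\ \dots,\ \tfrac{r^2+r}{2}-1\Big\}
\end{equation*}
of $r$ consecutive digits: the digit $\tfrac{r^2-r}{2}$ keeps every tail inside $J$, each of the $r-1$ digits $\tfrac{r^2-r}{2}+1,\dots,\tfrac{r^2+r}{2}-1$ is admissible with an arbitrary tail, and the digit $\tfrac{r^2+r}{2}$ would be admissible only together with an all-zero tail, which then fails the constraint at the next stage (so, in particular, no $R$-adic rational lies in $E_r$ and every $x\in E_r$ has a unique expansion). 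Thus $E_r=\bigcup_{c\in S}\psi_c(E_r)$, where $\psi_c(y)=(c+y)/r^2$; the sets $\psi_c(E_r)$, $c\in S$, are pairwise disjoint (consecutive retained intervals at each level are separated by a gap of length $(r-1)/r^3>0$), so the open set condition holds for $r$ similarities of ratio $1/r^2$, and $E_r$ is a Cantor set with $\dim_H E_r=\log r/\log r^2=\tfrac12$. This proves part~(ii).

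The only subtle point is the digit analysis in the last step: one must check carefully that the admissible digit set is exactly $S$, with the two endpoints of $J$ handled correctly and the non-uniqueness of $R$-adic expansions accounted for. The computation of $\max g$ is elementary but needs to be carried out with some care, and the final dimension count is entirely standard.
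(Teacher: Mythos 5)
Your proof is correct: the survey states Baba's theorem without proof, and your argument is precisely the natural extension of the paper's own proof of Theorem~\ref{thm:maxima} (the case $r=2$), pairing consecutive terms into a table-top function $g$ whose maximum set is an interval of length $1/r$ centered at $1/2$ and reading off the resulting digit restriction in base $r^2$. The details you flag as delicate --- the admissible digit set $S$, the exclusion of $R$-adic rationals (which also supplies the nonemptiness needed before asserting $M_r=r^2/(2r^2-2)$), and the second-level gaps of length $(r-1)/r^3$ giving the open set condition --- all check out, as does the odd case, where $r^n/2$ is a half-integer for every $n$.
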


\subsection{The Takagi class} \label{subsec:Takagi-class}

Another direct generalization of the Takagi function is obtained by replacing the factor $1/2^n$ in \eqref{eq:Takagi-def} with a general (real) constant $c_n$. This gives functions of the form
\begin{equation}
f(x)=\sum_{n=0}^\infty c_n\phi(2^n x).
\label{eq:Takagi-class}
\end{equation}
It is immediately clear that the series converges uniformly, and hence defines a continuous function $f$, when $\sum_{n=0}^\infty |c_n|<\infty$. Hata and Yamaguti \cite{Hata-Yamaguti} show that this condition is also necessary, and call the collection of functions of the form \eqref{eq:Takagi-class} the {\em Takagi class}. They note that each member of the Takagi class solves a discrete version of the Dirichlet boundary value problem involving the ``discrete Laplacian"
\begin{equation*}
\Delta_{i,n}f:=f\left(\frac{i}{2^{n}}\right)+ f\left(\frac{i+1}{2^{n}}\right)- 2f\left(\frac{2i+1}{2^{n+1}}\right),
\end{equation*}
in the sense that $\Delta_{i,n}f=-c_{n}$ for $n\geq 0$ and $i=0,1,\dots,2^{n}-1$, with $f(0)=f(1)=0$.

Beside the Takagi function itself, the Takagi class contains a number of interesting examples from the classical literature. For instance, Faber \cite{Faber} introduced the highly lacunary series
\begin{equation*}
F(x)=\sum_{k=1}^\infty \frac{1}{10^k}\phi(2^{k!}x),
\end{equation*}
and showed that $F$ has no derivative, finite or infinite, at any point. Moreover, he proved that $F$ does not satisfy a Lipschitz condition of any order. Kahane likewise constructs lacunary series of the form $f(x)=\sum_{\nu=1}^\infty p_\nu 2^{-k_\nu}\phi(2^{k_\nu}x)$, and shows how $p_\nu$ and $k_\nu$ can be chosen so that the modulus of continuity of $f$ is majorized (respectively minorized) by a given function satisfying appropriate conditions. 

Many of the functions in the Takagi class are fractals, in the sense that the Hausdorff dimension of their graph is strictly greater than one. Besicovitch and Ursell \cite{Besicovitch} showed that, if $f$ is any function satisfying a Lipschitz condition of order $\delta\in(0,1]$, then
\begin{equation}
1\leq \dim_H(f)\leq 2-\delta,
\label{eq:dim-estimates}
\end{equation}
where $\dim_H(f)$ denotes the Hausdorff dimension of the graph of $f$. They show that within these bounds any dimension is possible. The implications of their work for the Takagi class are collected in the following theorem.

\begin{theorem}[Besicovitch and Ursell, 1937] \label{thm:BU}
Let $f(x)=\sum_{n=0}^\infty 2^{-\delta a_n}\phi(2^{a_n}x)$, where $0<\delta<1$ and $a_{n+1}-a_n\geq A>0$. Then $f$ is Lipschitz of order $\delta$ but of no smaller order, and:

(i) If $a_{n+1}/a_n\to\infty$, then $\dim_H(f)=1$.

(ii) If $a_n=\mu^n$ where $\mu>1$, then $1<\dim_H(f)<2-\delta$. Moreover, for each $d\in(1,2-\delta)$ there exists $\mu>1$ such that, if $a_n=\mu^n$, then $\dim_H(f)=d$.

(iii) If $a_{n+1}/a_n\to 1$ but $a_{n+1}-a_n\to\infty$, then $\dim_H(f)=2-\delta$.
\end{theorem}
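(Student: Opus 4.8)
The plan is to handle the three parts in turn. For the H\"older exponent, the positive direction is the usual head/tail split: write $f=g_N+R_N$, where $g_N$ collects the terms of frequency at most $2^N$, bound $g_N(x+h)-g_N(x)$ using that the $n$-th term is Lipschitz with constant $2^{(1-\delta)a_n}$, and bound $R_N$ using the amplitude bound $\tfrac12 2^{-\delta a_n}$ on its terms; the hypothesis $a_{n+1}-a_n\ge A$ (together with $\delta<1$) makes the head sum dominated by its last term and the tail sum by its first term, each of order $|h|^\delta$ when $2^{-N}\asymp|h|$, so $|f(x+h)-f(x)|=O(|h|^\delta)$. For the sharpness the cleanest route is to compute the Schauder coefficients of $f$ directly. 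Using $\phi(2^ax)=\tfrac12\sum_i S_{a,i}(x)$, a term $2^{-\delta a_m}\phi(2^{a_m}\cdot)$ contributes $\tfrac12 2^{-\delta a_m}$ to every level-$a_m$ coefficient and $0$ to all other levels; a term with $a_m>a_n$ contributes $0$ to every level-$a_n$ coefficient (the relevant dyadic abscissas become integers after scaling, where $a_{n+1}-a_n\ge1$ is used), and a term with $a_m<a_n$ is affine on each dyadic interval of length $2^{-a_n}$ and so also contributes $0$. Hence the level-$a_n$ Schauder coefficients of $f$ all equal $\tfrac12 2^{-\delta a_n}$, which forces a pair of points $x,x'$ at distance $2^{-a_n-1}$ with $|f(x)-f(x')|\ge\tfrac12 2^{-\delta a_n}\asymp|x-x'|^\delta$; letting $n\to\infty$ shows $f$ is not Lipschitz of order $\gamma$ for any $\gamma>\delta$.

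For the upper bounds on $\dim_H(f)$, the inequality $\dim_H(f)\le2-\delta$ is \eqref{eq:dim-estimates}, so only case (i) and the strict bound in (ii) need work, and these require coverings by balls of widely varying radii (a single-scale box count would recover only $2-\delta$). The key observation is that on a dyadic interval $J$ of length $2^{-a_n-1}$ the partial sum of $f$ through frequency $2^{a_n}$ is one affine segment of slope $O(2^{(1-\delta)a_n})$, while the remaining tail has sup-norm $O(2^{-\delta a_{n+1}})$; thus the arc of the graph over $J$ lies in a parallelogram with long side $\asymp 2^{-\delta a_n}$ and short side $\asymp 2^{-\delta a_{n+1}}$, which is covered by $\asymp 2^{\delta(a_{n+1}-a_n)}$ balls of radius $\asymp 2^{-\delta a_{n+1}}$. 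Summing over the $\asymp 2^{a_n}$ intervals $J$, the $s$-dimensional content of this cover is of order $2^{(1-\delta)a_n+\delta a_{n+1}(1-s)}$. In case (i) this tends to $0$ for every $s>1$, since $a_n/a_{n+1}\to0$; hence $\dim_H(f)\le1$, and with the trivial reverse inequality $\dim_H(f)=1$. In case (ii) with $a_{n+1}=\mu a_n$ the content tends to $0$ once $s>1+\tfrac{1-\delta}{\delta\mu}$, already strictly below $2-\delta$ for large $\mu$; for small $\mu$ one refines the cover by subdividing $J$ and resolving the intermediate teeth, to get a bound strictly below $2-\delta$ for every $\mu>1$.

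For the lower bounds I would use a mass distribution: push Lebesgue measure on $[0,1]$ forward under $x\mapsto(x,f(x))$ to a measure $\mu$ on the graph and estimate $\mu(B(p,r))=\lambda\{x:|x-x_0|<r,\ |f(x)-f(x_0)|<r\}$. Because the oscillation of $f$ on a window of width $r$ is of the exact order $r^\delta\gg r$, one expects $f$ to remain within any horizontal strip of height $r$ for only an $\asymp r^{1-\delta}$ fraction of the window, giving $\mu(B(p,r))\lesssim r^{2-\delta}$ and hence $\dim_H(f)\ge2-\delta$ by Frostman's lemma. It is exactly here that the hypotheses of (iii), $a_{n+1}/a_n\to1$ and $a_{n+1}-a_n\to\infty$, enter: they keep every window from being essentially flat at any scale, so that the self-affine identity $f|_J=(\text{affine})+2^{-\delta a_n}\widetilde{f}\bigl(2^{a_n}(\cdot-\inf J)\bigr)$ with $\widetilde{f}$ of the same form can be iterated to control the distribution of $f$-values and its accumulation; the same scheme with an exponent slightly above $1$ gives $\dim_H(f)>1$ in (ii). For the last statement of (ii), write $g(\mu):=\dim_H(f)$ for $a_n=\mu^n$: the estimates above degrade continuously in $\mu$ and show $g$ continuous with $g(\mu)\to2-\delta$ as $\mu\downarrow1$ and $g(\mu)\to1$ as $\mu\to\infty$, so, since $1<g(\mu)<2-\delta$ for every $\mu>1$, the intermediate value theorem furnishes, for each $d\in(1,2-\delta)$, a $\mu$ with $g(\mu)=d$.

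The genuinely hard part is the lower bound $\dim_H(f)\ge2-\delta$ in (iii): converting the heuristic that $f$ sweeps its full $r^\delta$-range uniformly across every $r$-window into an honest estimate for $\mu(B(p,r))$ under only the slowly-varying gap condition. The head/tail decomposition and the self-affine scaling reduce this to a statement about the one-step distribution of the rescaled tail $\widetilde{f}$, but controlling how these steps accumulate is the delicate point; by contrast the H\"older estimate and the upper bounds are comparatively routine.
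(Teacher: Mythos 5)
First, a point of order: the survey does not prove this theorem. It is quoted from Besicovitch and Ursell's 1937 paper, with only the general bounds \eqref{eq:dim-estimates} recalled beforehand, so your proposal has to be judged on its own merits rather than against a proof in the text. Your overall architecture --- head/tail splitting for the H\"older exponent, parallelogram covers for the dimension upper bounds, a mass-distribution argument for the lower bounds --- is the right one and is in the spirit of Besicovitch--Ursell. The trouble is that the steps you leave as sketches are precisely the ones carrying the content of the theorem. (a) The lower bound $\dim_H(f)\geq 2-\delta$ in (iii) is left entirely as a heuristic, as you acknowledge; note that the heuristic as phrased (that $f$ sweeps an $r^\delta$-range roughly uniformly over \emph{every} window of width $r$) is false pointwise near extrema of the partial sums, so the honest route is to bound the energy $\iint|p-q|^{-s}\,d\mu(p)\,d\mu(q)$ of the pushforward measure, integrating out the exceptional windows; this is exactly where $a_{n+1}-a_n\to\infty$ and $a_{n+1}/a_n\to 1$ must do real work, and no argument is offered. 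The lower bound $\dim_H(f)>1$ in (ii) is likewise asserted with no argument. (b) Your single-scale parallelogram cover in (ii) gives $\dim_H(f)\leq 1+(1-\delta)/(\delta\mu)$, which beats $2-\delta$ only when $\mu>1/\delta$; ``subdividing $J$ and resolving the intermediate teeth'' at the next dyadic generation merely reproduces the identical estimate with $n$ shifted by one, so it does not close the gap for $1<\mu\leq 1/\delta$. The actual fix is to cover at an intermediate scale $2^{-b}$ with $a_n\leq b\leq a_{n+1}$, where the oscillation over an interval of length $2^{-b}$ is $O(2^{(1-\delta)a_n-b}+2^{-\delta a_{n+1}})$; balancing the two terms at $b=(1-\delta)a_n+\delta a_{n+1}$ yields $\dim_H(f)\leq 1+\tfrac{1-\delta}{1-\delta+\delta\mu}$, which is strictly below $2-\delta$ for every $\mu>1$ and interpolates correctly between the limits $\mu\downarrow 1$ and $\mu\to\infty$. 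This needs to be written out.

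(c) The final claim of (ii) cannot be obtained by applying the intermediate value theorem to $g(\mu):=\dim_H(f)$ unless you first prove that $g$ is continuous, and continuity does not follow from having continuously varying upper and lower \emph{estimates} that do not coincide; Besicovitch and Ursell get ``every $d\in(1,2-\delta)$ is attained'' by computing the dimension exactly for suitable sequences, and some substitute for that exact evaluation is needed here. Two smaller issues: your Schauder-coefficient computation for the sharpness of the H\"older exponent tacitly assumes the $a_n$ are integers with $a_{n+1}-a_n\geq 1$, whereas the hypothesis is only $a_{n+1}-a_n\geq A>0$ and in case (ii) $a_n=\mu^n$ need not be an integer; and even the second-difference version at the apex of the $n$-th tent needs care when $A$ is small, since the head's contribution is only $O\bigl(2^{-(1-\delta)A}(1-2^{-(1-\delta)A})^{-1}2^{-\delta a_n}\bigr)$, which is not automatically smaller than the main term. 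In short, the H\"older upper estimate and the case (i) upper bound are correct and routine, but the sharp lower bound in (iii), the strict upper bound and the attainment statement in (ii), and the sharpness of the exponent all remain to be proved.
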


Examples of (i), (ii) and (iii) are, respectively: $a_n=2^{n^2}$, $a_n=2^n$, and $a_n=n^2$. Note that in all three cases the series \eqref{eq:Takagi-class} is lacunary. Generally speaking, the more lacunary the series, the smaller the dimension of the graph is. When the series is extremely lacunary as in (i), the fine structure of the graph virtually disappears and the function $f$ becomes ``almost differentiable". But the above theorem does not say anything about the important (nonlacunary) case $a_n=n$. In other words, it does not give the dimension of the functions
\begin{equation}
g_\delta(x)=\sum_{n=0}^\infty 2^{-\delta n}\phi(2^n x), \qquad 0<\delta<1.
\label{eq:Ledrappier}
\end{equation}
This boundary case was addressed more than half a century later by Ledrappier, using modern tools that were not yet available to Besicovitch and Ursell. 

Here too $g_\delta$ is Lipschitz of order $\delta$. Ledrappier showed that typically, $g_\delta$ attains the upper bound in \eqref{eq:dim-estimates}. More precisely, he proved that $\dim_H(g_\delta)=2-\delta$ whenever $2^{\delta-1}$ is an Erd\H os number. A number $\lambda\in(0,1)$ is called an Erd\H os number if the probability distribution of $\sum_{n=0}^\infty\lambda^n\eps_n$ (the so-called {\em Bernoulli convolution}) has Hausdorff dimension 1, where $\{\eps_n\}$ are i.i.d. random variables taking the values $1$ and $-1$ each with probability $1/2$. Later progress on Bernoulli convolutions due to Solomyak \cite{Solomyak} implies that almost every $\lambda\in(1/2,1)$ is Erd\H os, and hence, by Ledrappier's result, $\dim_H(g_\delta)=2-\delta$ for almost every $\delta\in(0,1)$.
Ledrappier's approach is dynamical, viewing the graph of $g_\delta$ as the repeller for some expanding self-map of $[0,1]\times\RR$. The deep and difficult proof uses ideas from smooth ergodic theory and a Marstrand-type lemma concerning projections of measures.

Recently, Katzourakis \cite{Katzourakis} has reported that, with $a_n=2\nu n$ for a parameter $\nu\in\NN$, the function $f$ in Theorem \ref{thm:BU} can be used to construct a ``pathological" solution to the nonlinear Aronsson partial differential equation and to the Infinity-Laplace PDE system.

For $\delta>1$, the function $g_\delta$ defined by \eqref{eq:Ledrappier} is Lipschitz and hence almost everywhere differentiable. When $1\leq\delta\leq 2$, $g_\delta$ turns out to be the extremal function in a certain approximate convexity problem; see Section \ref{subsec:app-analysis} below. 

\bigskip
After the publication of Hata and Yamaguti's paper, K\^ono \cite{Kono} investigated the Takagi class in greater generality. Perhaps the most striking result, concerning the differentiability of $f$, is the following:

\begin{theorem}[K\^ono 1987] \label{thm:Kono-differentiability}
Let $f$ be defined by \eqref{eq:Takagi-class}, and put $a_n:=2^n c_n$.

{\rm (i)} If $\{a_n\}\in\ell^2$, then $f$ is absolutely continuous and hence differentiable almost everywhere.

{\rm (ii)} If $\{a_n\}\not\in\ell^2$ but $\lim_{n\to\infty}a_n=0$, then $f$ is nondifferentiable at almost every point of $[0,1]$, but $f$ is differentiable on an uncountably large set, and the range of $f'$ is $\RR$.

{\rm (iii)} If $\limsup_{n\to\infty}|a_n|>0$, then $f$ is nowhere differentiable.
\end{theorem}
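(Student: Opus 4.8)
The plan is to treat all three cases through a single object: the dyadic secant slopes of the partial sums. Following Billingsley's argument, set $f_n(x):=\sum_{k=0}^{n-1}c_k\phi(2^kx)$, a continuous piecewise-linear function with $f_n(0)=0$ whose slope on the dyadic interval $[u_n,v_n]$ of order $n$ about a point $x$ equals
\begin{equation*}
S_n(x):=\sum_{k=0}^{n-1}(-1)^{\eps_{k+1}(x)}a_k.
\end{equation*}
Three elementary facts do most of the work. (1) Since $f_n$ is Lipschitz with $f_n(0)=0$ and $f_n'=S_n$ off the dyadic rationals of order $\le n$, we have $f_n(x)=\int_0^x S_n(t)\,dt$, and $f_n\to f$ uniformly because $\sum_n|c_n|<\infty$ for every Takagi-class function. (2) If $f$ has a derivative at $x$, then $S_n(x)\to f'(x)$ (finite or $\pm\infty$): writing $\alpha_n=2^n(x-u_n)\in[0,1)$, $\beta_n=1-\alpha_n$, one has $S_n(x)=\alpha_n\frac{f(x)-f(u_n)}{x-u_n}+\beta_n\frac{f(v_n)-f(x)}{v_n-x}$, a convex combination of secant slopes of $f$ over intervals shrinking to $x$ (with the obvious one-sided reading at dyadic $x$). (3) Under Lebesgue measure the functions $(-1)^{\eps_{k+1}}$ are i.i.d.\ $\pm1$ variables, so $x\mapsto S_n(x)$ is the $n$-th partial sum of a Rademacher series with coefficients $a_k$.

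Cases (i) and (iii) are then quick. For (i), with $\{a_n\}\in\ell^2$, Kolmogorov's theorem (or $L^2$ martingale convergence for the dyadic filtration) gives that $\sum_k(-1)^{\eps_{k+1}}a_k$ converges a.e.\ and in $L^2$ to some $S_\infty$; since $\|S_n\|_{L^2}^2=\sum_{k<n}a_k^2$ stays bounded, $S_n\to S_\infty$ in $L^1[0,1]$, hence $\int_0^x S_n\to\int_0^x S_\infty$ for every $x$, and fact (1) gives $f(x)=\int_0^x S_\infty(t)\,dt$. So $f$ is absolutely continuous and, by Lebesgue's theorem, differentiable a.e.\ with $f'=S_\infty$. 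For (iii), pick $\delta>0$ and infinitely many $n$ with $|a_n|\ge\delta$; then for every $x$ we have $|S_{n+1}(x)-S_n(x)|=|a_n|\ge\delta$ infinitely often, so $\{S_n(x)\}$ is not Cauchy, and fact (2) forbids a finite derivative at $x$. Hence $f$ is nowhere differentiable.

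Case (ii) carries the real content. Note first that $\sup_n|a_n|<\infty$, so $\sum_n|a_n|\ge(\sup_n|a_n|)^{-1}\sum_n a_n^2=\infty$. For the a.e.\ nondifferentiability, the converse direction of Kolmogorov's three-series theorem shows the Rademacher series diverges a.e.; by symmetry of the $(-1)^{\eps_{k+1}}$ and Kolmogorov's $0$--$1$ law its partial sums tend to neither $+\infty$ nor $-\infty$ a.e., so $S_n(x)$ has no limit in $[-\infty,\infty]$ for a.e.\ $x$, and fact (2) rules out any derivative there. The differentiable points come from a one-scale estimate, which I would establish by splitting \eqref{eq:Takagi-class} at $N=N(h):=\lfloor\log_2(1/|h|)\rfloor$ and using $|a_k|\to0$: for every $x$ with $c:=\inf_m\phi(2^mx)>0$ and all small $|h|$,
\begin{equation*}
\left|\frac{f(x+h)-f(x)}{h}-S_{N(h)}(x)\right|\le C(c)\sup_{k\ge N(h)-j(c)}|a_k|,
\end{equation*}
where $C(c),j(c)$ depend only on $c$; positivity of $c$ is exactly what keeps the interval between $x$ and $x+h$ away from the corners of $\phi(2^k\cdot)$ for all but the $j(c)$ indices just below $N(h)$, so that the low-order terms are honestly linear and the tail $k\ge N(h)$ contributes only $O(\sup_{k\ge N(h)}|a_k|\cdot|h|)$. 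Hence $f'(x)=\lim_n S_n(x)$ whenever $\inf_m\phi(2^mx)>0$ and $S_n(x)$ converges. To finish, fix $L\in\RR$ and construct such an $x$: restrict to binary expansions containing neither $000$ nor $111$ (which forces $\phi(2^mx)\ge1/8$ for every $m$) and, within that constraint, choose the digits greedily so the partial sums $\sum_{k<n}(-1)^{\eps_{k+1}(x)}a_k$ settle into ever-smaller neighborhoods of $L$ --- possible since $|a_k|\to0$ and $\sum_k|a_k|=\infty$, while the ``no three equal digits'' rule forces at most one digit in every three and so never obstructs the steering. Taking $L=0$, the remaining freedom produces uncountably many points of differentiability; letting $L$ range over $\RR$ shows $f'$ is onto.

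The step I expect to be the real obstacle is the one-scale estimate in Case (ii) together with the digit construction it feeds: one must control the errors from the nonlinearity of $\phi(2^k\cdot)$ near the cutoff $N(h)$ (hence the need for a lower bound on $\phi(2^mx)$), and then verify that the digit restriction enforcing that bound is still loose enough to drive the Rademacher partial sums to an arbitrary real limit with uncountably many free choices remaining. The remaining ingredients --- the convergence/divergence dichotomy for Rademacher series and Lebesgue's theorem on absolutely continuous functions --- are standard.
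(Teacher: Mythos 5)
This theorem is quoted in the survey from K\^ono's paper without proof, so there is no in-paper argument to compare against; your architecture (dyadic secant slopes as partial sums of a Rademacher series, $f_n(x)=\int_0^x S_n$, the convex-combination fact, Kolmogorov/three-series, and a one-scale estimate at points kept away from the corners of $\phi(2^k\cdot)$) is the natural one and is essentially sound. Parts (i) and (iii) are complete, the a.e.\ nondifferentiability in (ii) is fine, and your one-scale estimate is correct as stated: note $\dist(2^kx,\tfrac12\ZZ)=\tfrac12\phi(2^{k+1}x)\geq c/2$, so $\inf_m\phi(2^mx)\geq c$ really does make $\phi(2^k\cdot)$ linear between $x$ and $x+h$ for all $k\leq N(h)-j(c)$, and the tail $k>N(h)$ is controlled by $|a_k|\min(|h|,2^{-k-1})$.

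The genuine gap is the steering step. First, the combinatorics is misstated: under ``no three equal digits'' a digit is forced whenever the previous \emph{two} agree, so forced positions can occur at every other step, not one in three. More seriously, the greedy rule you describe provably fails. Take $a_{3i}=a_{3i+1}=\eps_i>0$ with $\sum_i\eps_i<\infty$ and $a_{3i+2}=\gamma_i>0$ with $\sum_i\gamma_i=\infty$, $\sum_i\gamma_i^2=\infty$, all tending to $0$ (e.g.\ $\gamma_i=i^{-1/2}$); this is a legitimate instance of case (ii). With target $L>0$ and all $a_k>0$, greedy always repeats the ``$+$'' digit when free, so the sign pattern becomes $+,+,-,+,+,-,\dots$ and the partial sums change by $2\eps_i-\gamma_i$ per block, hence tend to $-\infty$: greedy is driven \emph{away} from $L$ forever, because the constraint forces the wrong sign exactly on the large coefficients. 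The statement you need is a lemma of the form: for any $a_k\to0$ with $\sum|a_k|=\infty$, any $L$, and any admissible initial segment, there is a sign sequence with no three consecutive equal signs whose weighted partial sums converge to $L$. This is true, but the proof must \emph{schedule} the forced sign flips onto locally small coefficients --- e.g.\ within each maximal run where the desired sign $\mathrm{sign}(a_k)$ is constant, flip one position out of every three chosen to minimize $|a_k|$, which retains at least $\tfrac13\sum|a_k|$ of net drift in the desired direction while each wrong-way step costs at most $\sup_{j\geq k}|a_j|\to0$; then alternate overshooting phases above and below $L$. That argument (or an equivalent one) is the actual content of the differentiable-points half of (ii), and it is exactly the piece your write-up replaces with the incorrect ``one in three'' remark. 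Once it is supplied, the uncountability and the surjectivity of $f'$ follow as you indicate.
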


K\^ono \cite{Kono} also considers the oscillations of $f$ (stating more general forms of Theorems \ref{thm:uniform-modulus} and \ref{thm:local-modulus}), and proves furthermore that the Takagi class contains only one function which is smooth in Zygmund's sense. That is, if
\begin{equation}
f(x+h)+f(x-h)-2f(x)=o(h) \qquad\mbox{as $h\downarrow 0$}
\label{eq:Zygmund-smooth}
\end{equation}
for all $x\in(0,1)$, then $c_n=a/4^n$ for some constant $a$, and $f(x)=2ax(1-x)$.

\bigskip
A special case of the Takagi class arises when one takes $c_n=\pm 1/2^n$ for all $n$ in \eqref{eq:Takagi-class}. Precisely, let ${\bf r}=(r_0,r_1,\dots)$ be a sequence with $r_n\in\{-1,1\}$ for each $n$, and define
\begin{equation*}
F_{\bf r}(x)=\sum_{n=0}^\infty \frac{r_n}{2^n}\phi(2^n x).
\end{equation*}
For example, the {\em alternating Takagi function}
\begin{equation}
\hat{T}(x)=\sum_{n=0}^\infty (-1)^n\frac{\phi(2^n x)}{2^n}
\label{eq:alternating-Takagi}
\end{equation}
is of the above form; see Figure \ref{fig:alternating-Takagi}. This gives an uncountably large class of functions which are ``close" to the Takagi function $T$ in the sense that their partial sums are all piecewise linear with integer slopes that change by $\pm 1$ at each step. It should therefore be no surprise that these functions share a large number of properties with $T$. For instance, Allaart \cite[Section 5]{Allaart4} shows that many of the results from Section \ref{sec:level-sets} concerning level sets hold for arbitrary $F_{\bf r}$: Almost all level sets of $F_{\bf r}$ are finite, but their average cardinality is infinite and the set of ordinates $y$ with uncountably large level sets is residual in the range of $F_{\bf r}$.

\begin{figure}
\begin{center}
\epsfig{file=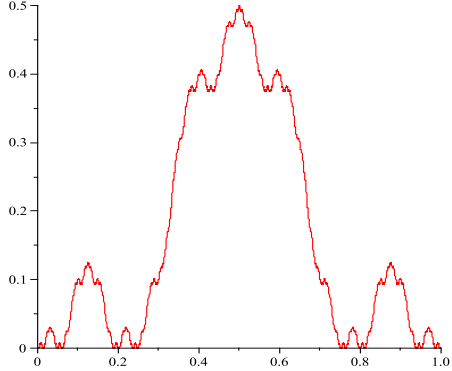, height=.25\textheight, width=.4\textwidth}
\caption{The alternating Takagi function}
\label{fig:alternating-Takagi}
\end{center}
\end{figure}

An interesting random version of the Takagi function is obtained by taking the components of ${\bf r}$ to be independent random variables with $\rP(r_n=1)=p$ and $\rP(r_n=-1)=1-p$, where $0\leq p\leq 1$. The maximum value $M$ of $F_{\bf r}$ is then a random variable, and the set $\MM:=\{x\in[0,1):F(x)=M\}$ is a random set. Allaart \cite{Allaart1} determines the probability distributions of $M$ and the size of $\MM$. If $p<1/2$, the distribution of $M$ is purely atomic and $|\MM|$ is almost surely finite, with range $\{2^l(2^m-1): l\in\ZZ_+, m\in\NN\}$. (For instance, one can have exactly 24 maximum points with positive probability.) If $p\geq 1/2$, the distribution $\mu$ of $M$ is singular continuous, and $\MM$ is a Cantor set with almost-sure Hausdorff dimension $(2p-1)/2p$. In the latter case, Allaart also determines the Hausdorff dimension and multifractal spectrum of $\mu$.

\subsection{The Zygmund spaces $\Lambda_d^*$, $\lambda_d^*$ and $\Lambda_{d,1}^*$} \label{subsec:Zygmund-spaces}

In 1945, Zygmund \cite{Zygmund} introduced the class $\lambda^*$ of ``smooth" functions of period 1, i.e. those functions $f$ satisfying \eqref{eq:Zygmund-smooth} uniformly in $x$, and the wider class $\Lambda^*$ of ``quasismooth" functions of period 1, which satisfy \eqref{eq:Zygmund-smooth} with $O(h)$ replacing $o(h)$. Zygmund studied various properties of functions in these classes, and characterized them in terms of uniform approximation by polynomials.

In 1989, Anderson and Pitt \cite{Anderson} introduced the larger classes $\lambda_d^*$ and $\Lambda_d^*$ of periodic functions satisfying a weaker form of smoothness defined in terms of differences over dyadic intervals. These classes have simple characterizations in terms of the Schauder expansions of their members. Recall that every continuous function $f$ of period 1 which vanishes at $0$ has on $[0,1)$ a unique Schauder expansion of the form \eqref{eq:Schauder-expansion}.
For easier comparison with the Takagi class, we will write the Schauder expansion in the form
\begin{equation}
f(x)=\sum_{n=0}^\infty \frac{r_n(x)}{2^n}\phi(2^n x),
\label{eq:general-Schauder}
\end{equation}
where $r_n(x)$ depends only on the first $n$ binary digits of $x$. More precisely, $r_n(x)=R_n(\eps_1,\dots,\eps_n)$, where $x=\sum_{n=0}^\infty 2^{-n}\eps_n$ and $\eps_n\in\{0,1\}$. The classes $\Lambda_d^*$ and $\lambda_d^*$ are defined as follows: $f\in\Lambda_d^*$ if and only if there exists a uniform bound $M$ such that $|r_n(x)|<M$ for all $n$ and all $x$; and $f\in\lambda_d^*$ if and only if $r_n(x)\to 0$ uniformly in $x$.

Anderson and Pitt \cite{Anderson} show that $\Lambda^*\subset \Lambda_d^*$ and $\lambda^*\subset \lambda_d^*$. The Takagi function is an example of a function which is in $\Lambda_d^*$ but not in $\Lambda^*$. On the other hand, as shown by Abbott et al. \cite{Abbott}, the alternating Takagi function $\hat{T}$ belongs to $\Lambda^*$. In general, functions in $\Lambda^*$ can have corners but no cusps, whereas functions in $\Lambda_d^*$ can have logarithmic cusps. It is shown in \cite{Anderson} that every member $f$ of $\Lambda_d^*$ is Lipschitz of order $h\log(1/h)$. That is, there is a constant $C$ such that, for all $0\leq x<x+h\leq 1$ with $h$ sufficiently small,
\begin{equation*}
|f(x+h)-f(x)|\leq Ch\log(1/h).
\end{equation*}
Another result in \cite{Anderson} is that the graph of every $f\in\Lambda_d^*$ is of $\sigma$-finite linear measure (see Section \ref{sec:graph}). With regard to level sets $L(y)=\{x\in[0,1): f(x)=y\}$, Anderson and Pitt prove that (i) if $f\in\Lambda_d^*$, then $L(y)$ is countable for almost every $y$; and (ii) if $f\in\lambda_d^*$, then $L(y)$ is finite for almost every $y$. It does not appear to be known whether the condition $f\in\Lambda_d^*$ gives enough regularity to the graph of $f$ in order that $L(y)$ be finite for almost all $y$.

The specific case of \eqref{eq:general-Schauder} where $|r_n(x)|$ is constant in $x$ for each $n$ was studied by Allaart \cite{Allaart2}. We shall call the collection of such functions the {\em flexible Takagi class}. It is an immediate generalization of the Takagi class in which the individual ``tents" at each level can point either upward or downward, but all tents within a given level have the same amplitude. This guarantees uniformity in the fine structure across the domain of $f$, while allowing for a wide variety of general shapes of the graph. Indeed, Allaart \cite{Allaart2} manages to extend all of K\^ono's results to this more general setting. Specifically, statements (i)-(iii) of Theorem \ref{thm:Kono-differentiability} hold when $a_n=|r_n|$. Whereas the Takagi class contains (up to a multiplicative constant) only one function in the Zygmund space $\lambda^*$, the flexible Takagi class contains many. For example, it contains the ``bell-shaped" curve
\begin{equation}
f(x)=\begin{cases}
8x^2, & x\leq 1/4\\
8x(1-x)-1, & 1/4\leq x\leq 3/4\\
8(1-x)^2, & x\geq 3/4,
\end{cases}
\label{eq:bell-function}
\end{equation}
depicted in Figure \ref{fig:bell-function} and obtained by setting $r_0=1$, $r_1=0$, and $r_n=-2^{2-n}X_1 X_2$ for $n\geq 2$, where $X_n(x)=(-1)^{\eps_n(x)}$ is the $n$th Rademacher function. See \cite[Section 4]{Allaart2} for more examples. Whether all functions in the flexible Takagi class that belong to $\lambda^*$ must be piecewise quadratic remains open.

\begin{figure}
\begin{center}
\epsfig{file=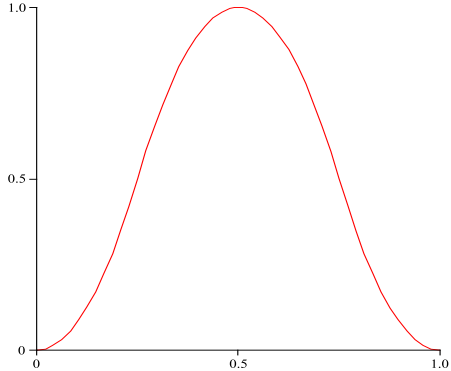, height=.25\textheight, width=.4\textwidth}
\caption{A smooth function in the flexible Takagi class, defined by \eqref{eq:bell-function}.}
\label{fig:bell-function}
\end{center}
\end{figure}

A subcollection of the flexible Takagi class in which $|r_n|=1$ for each $n$ was studied by Abbott, Anderson and Pitt \cite{Abbott}, who denote this subcollection by $\Lambda_{d,1}^*$. It contains the Takagi function, as well as several other interesting functions that have occurred in the literature. For instance, the {\em Gray Takagi function} of Kobayashi \cite{Kobayashi}, which plays a role in the analysis of Gray code digital sums (see Section \ref{subsec:app-number-theory} below), belongs to $\Lambda_{d,1}^*$. It has $r_n=X_n$ for each $n$. Another example is the function $T^3$ of Kawamura \cite{Kawamura1}, which has a connection with certain self-similar sets in the complex plane. It has $r_n=X_1\cdots X_n$ for each $n$. These functions are shown in Figure \ref{fig:Gray-Takagi-and-T3}.

\begin{figure}
\begin{center}
\epsfig{file=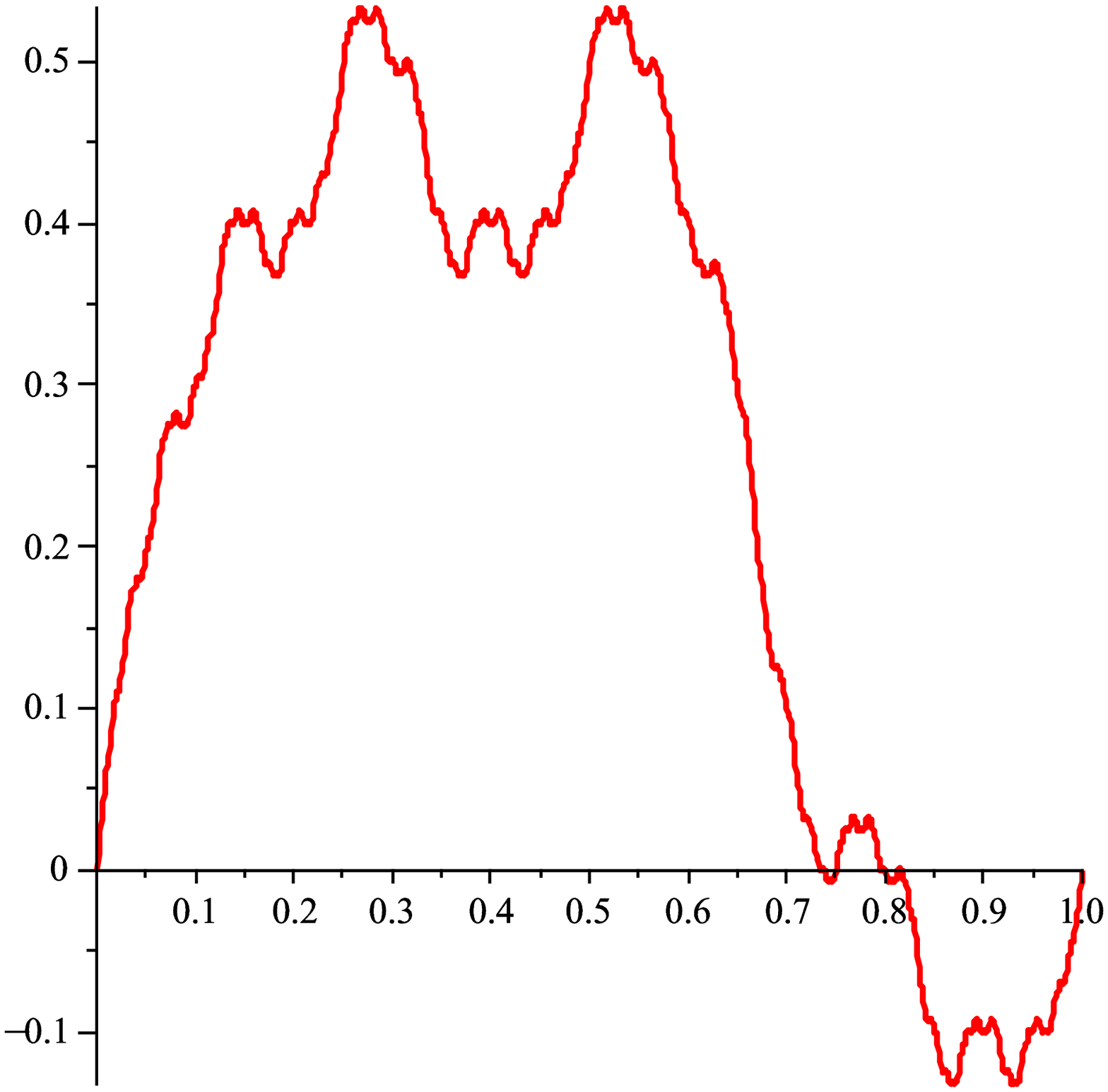, height=.25\textheight, width=.4\textwidth}
\qquad
\epsfig{file=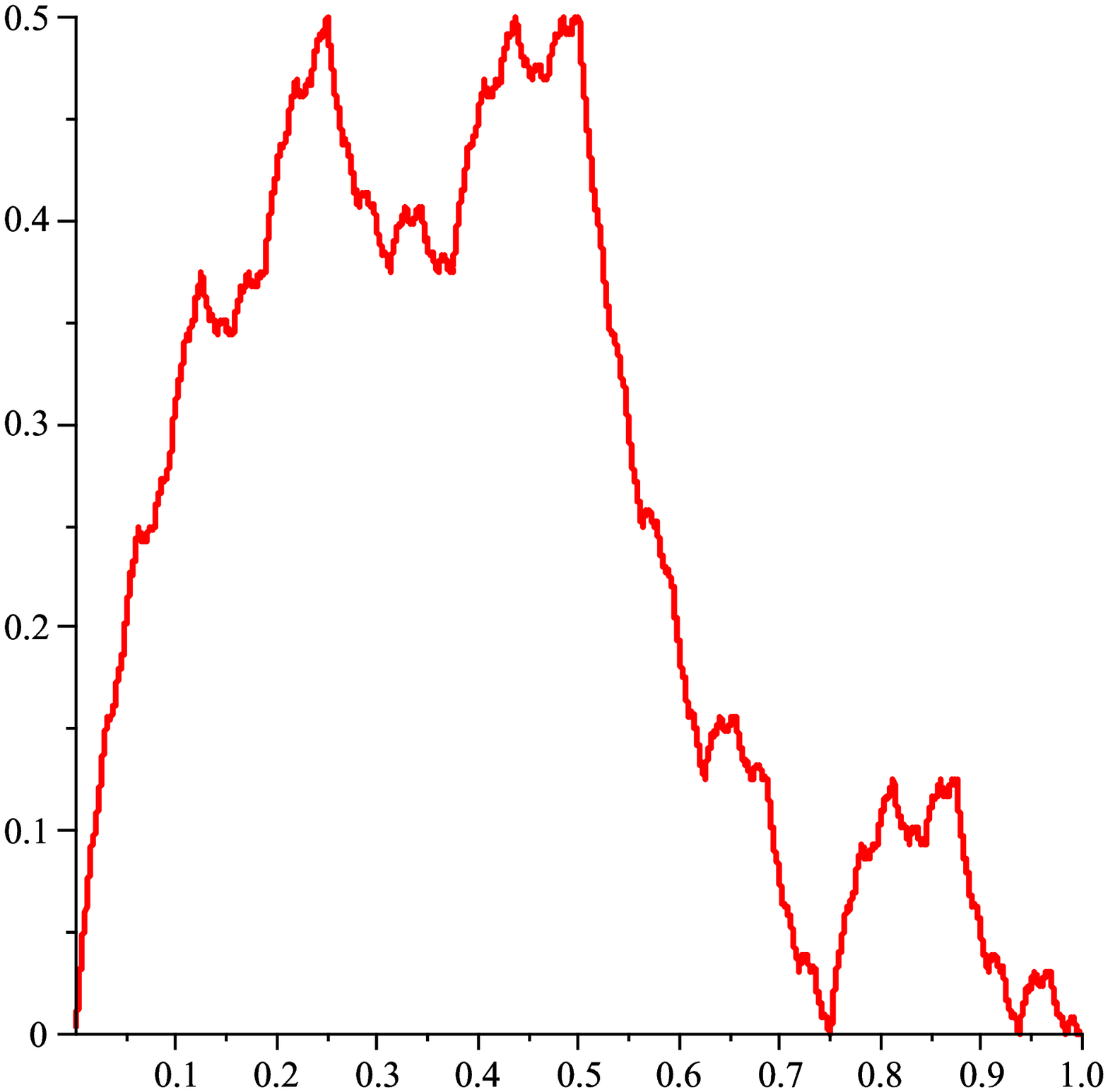, height=.25\textheight, width=.4\textwidth}
\caption{The Gray Takagi function (left) and Kawamura's $T^3$ (right)}
\label{fig:Gray-Takagi-and-T3}
\end{center}
\end{figure}

For a continuous function $f$ on $[0,1)$, define the dyadic difference quotients $D_n f$ as follows. If $x\in[0,1)$, let $k$ be the integer such that $k/2^n\leq x<(k+1)/2^n$, and put
\begin{equation*}
D_n f(x):=\frac{f\left(\frac{k+1}{2^n}\right)-\left(\frac{k}{2^n}\right)}{2^{-n}}=2^n \left[f\left(\frac{k+1}{2^n}\right)-\left(\frac{k}{2^n}\right)\right].
\end{equation*}
(For the Takagi function, $D_n f(x)=D_n(x)$ as in Section \ref{subsec:derivatives}; see \eqref{eq:dyadic-secant-slopes}.) Abbott, Anderson and Pitt \cite{Abbott} consider the set $S_K$ of {\em slow points} with constant $K$, that is, the set
\begin{equation*}
S_K:=S_K(f):=\{x\in[0,1): |D_n f(x)|\leq K\ \mbox{for all $n$}\}.
\end{equation*}
They show that for $f\in\Lambda_{d,1}^*$, the Hausdorff dimension of $S_K$ is given by
\begin{equation*}
\dim_H S_K=1+\log_2\left(\cos\left(\frac{\pi}{2(K+1)}\right)\right).
\end{equation*}
Moreover, their proof shows that $S_K$ is an $s$-set, with $s$ being the above dimension. In particular, the set $\bigcup_{K=1}^\infty S_K$ of all slow points of $f$ has full Hausdorff dimension 1. On the other hand, for $f\in\Lambda_{d,1}^*$ the set of slow points is a null set, because $\{D_n f\}_n$ is a simple random walk and hence obeys the law of the iterated logarithm. The authors of \cite{Abbott} are particularly interested in the interplay between slow points and local Lipschitz properties of a function. Let $L=L(f)$ denote the set of points $x$ at which $f$ satisfies a local Lipschitz condition; that is, those points $x$ for which there exists a constant $M$ such that $|f(x)-f(y)|\leq M|x-y|$ for all $y\neq x$.
Abbott, Anderson and Pitt show that any $f\in\Lambda_{d,1}^*$ satisfies a local Lipschitz condition at most points of $S_K=S_K(f)$ in the sense that $\dim_H (S_K\cap L)=\dim_H S_K$. For 
the Takagi function and the alternating Takagi function the stronger statement $S_K\subset L$ holds, but this is exceptional: If a member $f\in\Lambda_{d,1}^*$ is chosen ``at random", then $\HH^\alpha(S_K\cap L)=0$ with probability 1, where $\alpha=\dim_H S_K$. Since $\HH^\alpha(S_K)>0$, this can be interpreted as saying that the ``typical" $f\in\Lambda_{d,1}^*$ satisfies a Lipschitz condition at very few of its slow points.

\subsection{The functions of Sekiguchi and Shiota} \label{subsec:Sekiguchi-Shiota}

Following the important paper of Hata and Yamaguti \cite{Hata-Yamaguti}, Sekiguchi and Shiota \cite{Sekiguchi-Shiota} studied further generalizions of the Takagi function. Their first result concerns the system of difference equations
\begin{align}
\begin{split}
f\left(\frac{2j+1}{2^{n+1}}\right)-(1-r)f\left(\frac{j}{2^n}\right)&-rf\left(\frac{j+1}{2^n}\right)=c_n,\\
&j=0,1,\dots,2^n-1,\quad n=0,1,2,\dots,
\end{split}
\label{eq:difference-equations}
\end{align}
where $r\in(0,1)$ is a constant parameter.
Let $L_r(x)$ be Lebesgue's singular function, and define the generalized Schauder function $S_r:\RR\to[0,1]$ by
$$S_r(x)=\begin{cases}
L_r(x)/r, &\mbox{if $0\leq x\leq 1/2$}\\
\big(1-L_r(x)\big)/(1-r), &\mbox{if $1/2\leq x\leq 1$},
\end{cases}$$
and $S_r(x+1)=S_r(x)$ for all $x\in\RR$.
Sekiguchi and Shiota show that the system \eqref{eq:difference-equations} has a unique continuous solution $f$ on $[0,1]$ if and only if $\sum_{n=0}^\infty |c_n|<\infty$, in which case
\begin{equation*}
f(x)=f(0)+(f(1)-f(0))L_r(x)+\sum_{n=0}^\infty c_n S_r(2^n x).
\end{equation*}
The second result of \cite{Sekiguchi-Shiota} is that $L_r(x)$ is an analytic function of $r$, and a recursive construction is given in terms of the functions $S_r(x)$ and Rademacher functions of the (normalized) $k$th partial derivative
\begin{equation}
T_{r,k}(x):=\frac{1}{k!}\frac{\partial^k L_r(x)}{\partial r^k}.
\label{eq:Trn-def}
\end{equation}
Sekiguchi and Shiota use martingale theory (based on the connection of $L_r(x)$ with unfair coin tossing explained in Section \ref{sec:FE}) to prove their results. They show moreover that the functions $T_{r,k}$ satisfy the system of difference equations
\begin{align*}
T_{r,k}\left(\frac{2j+1}{2^{n+1}}\right)- (1-r)T_{r,k}\left(\frac{j}{2^n}\right)- &rT_{r,k}\left(\frac{j+1}{2^n}\right) \\
&=T_{r,k-1}\left(\frac{j+1}{2^n}\right)-T_{r,k-1}\left(\frac{j}{2^n}\right)
\end{align*}
for $k\geq 1$, $n\geq 0$ and $j=0,1,\dots,2^n-1$, where we set $T_{r,0}:=L_r$. Note that by \eqref{eq:takagi-lebesgue}, $T_{1/2,1}=2T$.

The special case of the functions $T_{r,k}$ in which $r=1/2$ was investigated further by Allaart and Kawamura \cite{AK1}. 
In that paper we show that $T_{1/2,n}(1-x)=T_{1/2,n}(x)$ when $n$ is odd, and $T_{1/2,n}(1-x)=-T_{1/2,n}(x)$ when $n$ is even. We derive from \eqref{eq:Ulam} and \eqref{eq:Trn-def} the representation
\begin{equation*}
\label{th:Tnx}
T_{1/2,n}(x)=\sum_{k=1}^{\infty}\eps_{k}
\left(\frac12\right)^{k-n}\sum_{i=0}^{n}(-1)^{i}\binom{I_k-1}{i}\binom{k-I_k+1}{n-i}, \qquad 0 \le x \le 1.
\end{equation*}
This is then used to prove that for each $n$, $T_{1/2,n}$ is nowhere differentiable and H\"older continuous of order $h\big(\log(1/h)\big)^n$. That is, there is a constant $C_n$ such that, for $0\leq x<x+h\leq 1$ and $h$ sufficiently small,
\begin{equation*}
|T_{1/2,n}(x+h)-T_{1/2,n}(x)|\leq C_n h\big(\log(1/h)\big)^n,
\end{equation*}
and this bound is the best possible. As a result, the graph of $T_{1/2,n}$ has Hausdorff dimension 1. We also determine the global and local extrema of $T_{1/2,2}$ and $T_{1/2,3}$. The sets of points where these functions attain there absolute maximum are shown to be Cantor sets of Hausdorff dimension zero, and their members have binary expansions that follow a remarkable pattern. By contrast, we conjecture that $T_{1/2,n}$ has only finitely many absolute maximum points when $n\geq 4$. Regarding the growth rate of $M_n:=\max_{0\leq x\leq 1}T_{1/2,n}$, we conjecture that 
\begin{equation}
M_n \sim \frac{2^n}{\sqrt{\pi n}}, \qquad\mbox{as $n\to\infty$}.
\label{eq:maximum-conjecture}
\end{equation}
(Proposition 6.25 in \cite{AK1} shows that $M_n$ grows at least this fast.)

For general $r\in(0,1)$, the functions $T_{r,n}$ have not yet been thoroughly investigated. However, a new paper by de Amo et al. \cite{ADCFS} announces the surprising result that if $r\neq 1/2$, then $T_{r,n}$ is differentiable (with vanishing derivative) almost everywhere for each $n$. This is in sharp contrast with the fact that $T_{r,n}$ is nowhere differentiable if $r=1/2$. One might expect a further study of these functions to reveal many more interesting properties.

\subsection{Other generalizations and variations} \label{subsec:other}

Many other generalizations and variations of the Takagi function have been studied. We briefly mention a few, but the list below is by necessity far from complete. 

Frankl et al. \cite{Frankl} define a generalized Takagi function using arbitrary base $1+c$ instead of base $2$, where $c$ is any positive real number. These functions are used in connection with the Kruskal-Katona theorem in combinatorics; see Section \ref{subsec:app-combinatorics} below. The resulting graphs look like the graph of $T$ with the wind blowing in from the side. Another way to create skewed versions of the Takagi function is of course to take the composition $T\circ h$ of $T$ with an arbitrary homeomorphism $h:[0,1]\to[0,1]$; Kawamura \cite{Kawamura2} considers the case where $h=L_a^{-1}$ and looks at the set of points where $T\circ h$ has a vanishing derivative. Tsujii \cite{Tsujii} constructs a Takagi-like function of two variables. Sumi \cite{Sumi} observes that Lebesgue's singular function $L_a(x)$ can be interpreted as the probability of ``tending to $+\infty$" in a certain kind of random dynamics in the real line. He then extends this notion to random dynamics in the complex plane, obtaining a complex version of Lebesgue's singular function and (by extending the relationship \eqref{eq:takagi-lebesgue}) a complex version of the Takagi function. 

For the more general setting where the tent map $\phi$ is replaced with an arbitrary periodic Lipschitz function, see Mauldin and Williams \cite{Mauldin-Williams} and the many papers citing this work.

\subsection{Open problems} \label{subsec:general-open}

There are many natural questions regarding the functions which have occurred in this section. Some of the problems listed below may have a known answer, some have not yet been seriously investigated, and others may be hard.

\begin{problem}
{\rm
Characterize exactly which functions in the flexible Takagi class belong to Zygmund's space $\lambda^*$. (A partial characterization is given in \cite[Theorem 4.1]{Allaart2}.) Must these functions necessarily be piecewise quadratic?
}
\end{problem}

\begin{problem}
{\rm
The classes $\Lambda^*$ and $\Lambda_{d,1}^*$ are both contained in $\Lambda_d^*$, but they are not disjoint. (For instance, the alternating Takagi function $\hat{T}$ belongs to both.) Can one characterize the intersection $\Lambda^*\cap\Lambda_{d,1}^*$?
}
\end{problem}

\begin{problem}
{\rm
Is it true that for every $f\in\Lambda_{d,1}^*$, the level set $L_f(y):=\{x\in[0,1]: f(x)=y\}$ is finite for almost every $y$?
}
\end{problem}


\begin{problem}
{\rm
A {\em Besicovitch function} is one that does not possess a one-sided (finite or infinite) derivative at any point. Does the flexible Takagi class contain any Besicovitch functions? 
}
\end{problem}

\begin{problem}
{\rm
At which points do the Takagi-van der Waerden functions $f_r$ possess an infinite derivative? (See \cite{AK2} for the case $r=2$.)
}
\end{problem}

\begin{problem}
{\rm
Determine the level set structure of $f_r$ for $r\geq 3$. In particular, is the level set of $f_r$ at almost every level $y$ finite?
}
\end{problem}

\begin{problem}
{\rm
Prove or disprove that $T_{1/2,n}$ has a unique global maximum point and a unique global minimum point when $n\geq 4$.
}
\end{problem}

\begin{problem}
{\rm
Prove or disprove \eqref{eq:maximum-conjecture}.
}
\end{problem}

\section{Applications} \label{sec:applications}

In this final section, we discuss a number of applications of the Takagi function and its generalizations. We begin with an application to the digital sum problem in number theory.

\subsection{Applications in Number theory} \label{subsec:app-number-theory}

Let $n$ be a positive integer with binary expansion $n=\sum_{i=0}^\infty \alpha_i(n)2^i$, where $\alpha_i(n)\in\{0,1\}$. 
We define the following arithmetical sums:
\begin{align}
s(n)&=\sum_{i=0}^\infty \alpha_i(n), \qquad \mbox{(the binary digital sum)}, \label{eq:binary-digital-sum}\\
S_k(N)&=\sum_{n=0}^{N-1}s(n)^k, \qquad \mbox{(the power sum)}, \label{eq:power-sum}\\
F(\xi,N)&=\sum_{n=0}^{N-1}e^{\xi s(n)}, \qquad \mbox{(the exponential sum)}, \label{eq:exponential-sum}
\end{align}
where $k,N$ are positive integers, and $\xi$ is a real number. Note that $s(n)$ is the number of ones in the binary expansion of $n$. As a special case, $F(\log 2,N)$ is the number of odd numbers in the first $N$ rows of Pascal's triangle.

The power and exponential sums of digital sums were first studied in connection with divisibility problems involving factorials and binomial coefficients, but they occur in other areas of mathematics as well: algebraic number theory, topology, combinatorics and computational algorithms. For more information, see Stolarsky~\cite{Stolarsky}.

If $N$ is a power of $2$, it is clear that 
\begin{equation*}
S_1(N)=\frac{N\log_2 N}{2}, \qquad F(\xi,N)=N^{\log_2 (1+e^{\xi})}.
\end{equation*}
However, it is difficult to get an explicit formula for arbitrary $N \in \NN$. 

The first author to give an exact expression for $S_1(N)$ was Trollope~\cite{Trollope} in 1968. Delange~\cite{Delange} gave a simpler proof and extended the result to digits in arbitrary bases. Trollope's expression is
\begin{equation}
S_1(N)=\frac{N\log_2 N}{2}+E(N),
\label{eq:Trollope}
\end{equation}
where the error term $E(N)$ involves the Takagi function: if $N$ is written as $N=2^m(1+x)$ with $m\in\ZZ_+$ and $0\leq x<1$, then
\begin{equation}
E(N)=2^{m-1}\left\{2x-T(x)-(1+x)\log_2(1+x)\right\}.
\label{eq:error-term}
\end{equation}
We can easily derive this from \eqref{eq:Kruppel} and \eqref{eq:takagi FA}. First, note that \eqref{eq:Kruppel} can be written as
\begin{equation*}
T\left(\frac{k}{2^m}\right)=\frac{mk}{2^m}-\frac{1}{2^{m-1}}S(k).
\end{equation*}
So if $N$ is expressed as above, then $N\leq 2^{m+1}$ and
\begin{align*}
S(N)&=\frac{(m+1)N}{2}-2^m T\left(\frac{1+x}{2}\right)\\
&=2^{m-1}\big(m(1+x)+2x-T(x)\big),
\end{align*}
where the last step follows since $T$ satisfies \eqref{eq:takagi FA}. Since
\begin{equation*}
\frac{N\log_2 N}{2}=2^{m-1}(1+x)\big(m+\log_2(1+x)\big),
\end{equation*}
\eqref{eq:error-term} follows.

It is natual to ask if we can generalize the expressions \eqref{eq:Trollope} and \eqref{eq:error-term} to arbitrary $k \in \NN$. For $k=2$, Coquet~\cite{Coquet} obtained an explicit formula and proved that $S_2(N)$ also has a close relationship with a nowhere differentiable function. Unfortunately, the general formula for $S_k(N)$ given by Coquet included a function specified only by a complicated recursion.

It seemed quite difficult to find a direct formula for $S_k(N)$ with $k \geq 3$, but finally, in 1995, Okada et al.~\cite{Okada} gave the complete answer not only for the power sum but also for the exponential sum. The key was to connect $F(\xi, N)$ with Lebesgue's singular function $L_r(x)$. Set $r={(1+e^{\xi})}^{-1}$ and $t=\log_2 N$. Denote the integer part of $t$ by $[t]$, and the fractional part by $\{t\}$. Then 
\begin{equation} \label{eq:exponential-lebesgue}
F(\xi,N)=\frac{1}{r^{[t]+1}}
L_{r}\left(\frac{1}{2^{1-\{t\}}}\right),
\end{equation}
for $\xi \in \RR$ and $N \in \NN$. Using Salem's expression for $L_r(x)$ (See \cite{Salem}), an explicit expression for $F(\xi,N)$ is obtained.

For $S_k(N)$, observe that the following equality holds for $k \in \NN$:
\begin{equation} \label{eq:power-exponential-sum}
S_k(N)=\frac12 \left.\frac{\partial^k}{\partial \xi^k}F(\xi,N)\right|_{\xi=0}.
\end{equation}
An essential part of this derivative is $(\partial^k/\partial r^k)L_r(x)|_{r=1/2}$, and hence, $S_k(N)$ can be expressed explicitly in terms of the nowhere differentiable function $T_{1/2,k}$ of Section \ref{subsec:Sekiguchi-Shiota}.

\bigskip

Later, several analogous problems were studied. 
For instance, instead of the binary expansion of natural numbers, Kobayashi \cite{Kobayashi} considered the {\em Gray code}, which is an encoding of natural numbers as sequences of $0$'s and $1$'s with the property that the representations of adjacent integers differ in exactly one position. Though the Gray code was introduced initially as a solution to a communications problem involving digitization of analogue data, it has since been used in a wide variety of other applications, including databases, experimental design and even puzzle solving; see Savage \cite{Savage}.

Kobayashi defined a probability measure $\tilde{\mu}_r$ on $[0,1]$ analogous to the binomial measure, but whereby each {\em Gray code} digit (rather than binary digit) of $x\in[0,1]$ is $0$ with probability $r$ and $1$ with probability $1-r$, independently of the other digits. 
Adapting the methods of \cite{Sekiguchi-Shiota} and \cite{Okada}, Kobayashi gave explicit expressions for the Gray digital sum, the Gray power sum and the Gray exponential sum, which are defined just as in \eqref{eq:binary-digital-sum}-\eqref{eq:exponential-sum}, but with Gray code digits replacing binary digits. The expressions are in terms of the distribution function $\tilde{L}_r$ of $\tilde{\mu}_r$ and a nowhere-differentiable continuous function $\tilde{T}$ which Kobayashi calls the {\em Gray Takagi function}; see Section \ref{subsec:Zygmund-spaces}.

Kr\"uppel~\cite{Kruppel2} modified the Trollope-Delange formula in a different direction, considering instead of $s(n)$ the alternating binary sum $\hat{s}(n)=\sum_{i=0}^\infty (-1)^i \alpha_i(n)$.
He derived an expression for
$\hat{S}_1(N)=\sum_{n=0}^{N-1}\hat{s}(n)$
in terms of the alternating Takagi function $\hat{T}$ defined in \eqref{eq:alternating-Takagi}.

A comprehensive review of the role of nowhere-differentiable functions and singular measures in the study of digital sum problems is found in Kobayashi et al. \cite{Digital-survey}.

\subsection{Applications in Combinatorics} \label{subsec:app-combinatorics}

A few interesting connections between the Takagi function and the discrete isoperimetric problems in combinatorics have been discovered. 

The first surprising result was given by Frankl et al.~\cite{Frankl} in 1995. 
Let $\binom{\NN}{k}$ denote the collection of subsets of $\NN$ having $k$ elements.
For any family $\FF\subset\binom{\NN}{k}$ and positive integer $l<k$, let the $l$th {\em shadow} of $\FF$ be
$$\Delta_l(\FF):=\{G \in \binom{\NN}{l}:  \exists F \in \FF\ {\mbox s.t.} \ G \subset F\}.$$ 
It is clear that $|\Delta_l(\FF)|$, the size of the shadow, depends on $\FF $. Thus, it is natural to ask for a fixed $m\in\NN$, which family $\FF$ such that 
$|\FF|=m$ attains the minimum size of the shadow. This problem can be viewed as a special case of the {\em vertex discrete isoperimetric problem} by considering a graph whose vertices correspond to finite subsets of $\binom{\NN}{k}$, with an edge connecting a set of cardinality $k$ to each of its subsets of cardinality $k-1$. 

The shadow minimization problem was solved by Kruskal and Katona independently.  
For $A, B \in \binom{\NN}{k}$, define {\em the colex order} by 
$$A <_{colex} B \qquad \Longleftrightarrow \qquad \max\{a \in A\backslash B\} < \max\{b \in B\backslash A\}.$$ 
Denote by $Colex(k,m)$ the family of the first $m$ elements in $\binom{\NN}{k}$ with the colex order. 
The Kruskal-Katona theorem says that for all $\FF \subset \binom{\NN}{k}$ such that $|\FF|=m$, and $l<k$, 
$$\min|\Delta_l(\FF)|=|\Delta_l(Colex(k,m))|.$$ 

Finally, define the {\em Kruskal-Katona function} by 
$$K_l^k(m)=-m+|\Delta_l(Colex(k,m))|.$$ Frankl et al. show that, properly normalized, $K_l^k(m)$ converges uniformly to $T(x)$ if $l=k-1$. 
More precisely, define the shadow function $S_k$ by normalizing $K_l^k$, where $l=k-1$:
\begin{equation*}
S_k(x):=k \binom{2k-1}{k}^{-1} K_{k-1}^{k}\left(\left\lfloor \binom{2k-1}{k} x \right\rfloor\right), \qquad \mbox{for} \qquad 0 \leq x \leq 1.
\end{equation*}
Then $S_k\to T$ uniformly on $[0,1]$.

In general, an estimation of the Kruskal-Katona function is hard to calculate. The result above clearly shows the reason. Frankl et al. exploit the above relationship and various known properties of the Takagi function to give new estimates of the Kruskal-Katona function. They also define a generalized Takagi function (using the base $1+c$ expansion of $x\in[0,1]$ for an arbitrary real $c>0$) and show that it has a similar connection to the minimum shadow size for $l=\lfloor ck \rfloor$.

\bigskip

Another connection of the Takagi function with combinatorics was given by Guu \cite{Guu}. Assume a graph $(V,E)$ with a set of vertices $V$ and a set of edges $E$ is given. For $S\subset V$, let $\theta(S)$ denote the number of edges in $E$ which connect a vertex in $S$ to a vertex in $V\backslash S$. For given $0\leq k\leq |V|$, the {\em edge discrete isoperimetric problem} is to minimize $\theta(S)$ over all $S$ having $k$ elements. Guu considers as a special case the $n$-cube $Q_n=(V_n,E_n)$, and defines the function
$$\theta(n,k)=\min\{\theta(S): S\subset V_n, |S|=k\}.$$
He shows that 
$$\lim_{n\to\infty}\frac{\theta(n,\lfloor 2^n x\rfloor)}{2^n}=T(x).$$

\subsection{Applications in Real Analysis} \label{subsec:app-analysis}

In this subsection we sketch two applications of the Takagi function in real analysis. One concerns the zero sets of continuous nowhere-differentiable functions, while the other relates to the study of approximate convexity of real functions.

\subsubsection{Zero sets of continuous nowhere-differentiable functions}

If a continuous function does not have a finite derivative anywhere, what can one say about its set of zeros? This question was answered in 1966 by Lipinski \cite{Lipinski}, who gave the following characterization.

\begin{theorem}
Let $C\subset [0,1]$. Then $C$ is the zero set of some nonnegative continuous nowhere-differentiable function $f:[0,1]\to\RR$ if and only if $C$ is closed and nowhere dense.
\end{theorem}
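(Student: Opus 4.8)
\emph{Necessity.} If $f\colon[0,1]\to\RR$ is continuous then its zero set $C=f^{-1}(\{0\})$ is closed. If moreover $f$ has no finite derivative at any point, then $C$ contains no interval $[a,b]$ with $a<b$, for otherwise $f$ would be identically $0$ on $[a,b]$ and hence $f'\equiv 0$ on $(a,b)$. A closed subset of $[0,1]$ with empty interior is nowhere dense, so $C$ is closed and nowhere dense.

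\emph{Sufficiency -- the construction.} Let $C$ be closed and nowhere dense, so $[0,1]\setminus C$ is open and dense; write it as the countable disjoint union of its connected components (``gaps'') $J_n=(a_n,b_n)$, where $a_n,b_n\in C$ for the interior gaps and (if $0\notin C$, resp.\ $1\notin C$) one gap has the form $[0,b)$, resp.\ $(a,1]$. The plan is to set $f\equiv 0$ on $C$ and to place on each interior gap a horizontally- and vertically-scaled copy of the Takagi function, $f(x)=h_nT\!\left(\frac{x-a_n}{b_n-a_n}\right)$ for $x\in J_n$, with heights $h_n>0$ to be chosen; on the (at most two) boundary gaps one uses instead a scaled copy of $T(t)+t$, resp.\ $T(t)+(1-t)$, so that $f$ does not vanish at $0$ or $1$. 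The trivial cases $C=\emptyset$ (take $f=T+1$) and $C$ finite are handled directly.

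\emph{Verifications, and isolating the hard case.} Since $T>0$ on $(0,1)$ and $h_n>0$, one gets $Z(f)=C$ and $f\ge 0$ for \emph{any} positive heights. Continuity on $\bigcup_nJ_n$ is clear, and continuity at a point $p\in C$ holds as soon as the heights of the gaps lying near $p$ are small. Nowhere-differentiability on $\bigcup_nJ_n$ is automatic, since there $f$ is an affine image of $T$ and $T$ has no finite derivative at any point of $(0,1)$. At a point $p\in C$ that is an \emph{endpoint} of some gap $J_n$, the scaled Takagi bump on $J_n$ forces a one-sided derivative $+\infty$ or $-\infty$ at $p$ (because $T'_+(0)=+\infty$ and $T'_-(1)=-\infty$; equivalently, by Cater's theorem $T$ has no finite one-sided derivative anywhere), so $f$ has no finite derivative at such $p$, whatever the heights. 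The remaining, genuinely delicate, points are those $p\in C$ that are two-sided limits of gaps but endpoints of none: there every difference quotient of $f$ over points of $C$ equals $0$, so to kill differentiability at $p$ one needs gaps $J_{n_k}\to p$ whose bumps are tall relative to their distance from $p$, i.e.\ $h_{n_k}\gtrsim\dist(p,J_{n_k})$.

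\emph{The main obstacle.} Producing such bumps without destroying continuity is the heart of the matter. The naive choice $h_n=b_n-a_n$ fails: if $C$ is ``thick'' near $p$ -- say a fat Cantor set, whose gaps near $p$ are much shorter than their distance to $p$ -- the bumps are then negligible and $f$ acquires a (vanishing) derivative at $p$. The remedy is to make $h_n$ depend on the position of $J_n$ rather than on its length. Enumerating the gaps so that $\ell_1\ge\ell_2\ge\cdots$ (possible, since only finitely many gaps exceed any fixed length), one sets, roughly, $h_n=\max\!\bigl(\ell_n,\ \dist(J_n,\,J_1\cup\cdots\cup J_{n-1}\cup\{0,1\})\bigr)$, essentially the distance from $J_n$ to the nearest at-least-as-long gap, with the $\ell_n$ term as a fallback ensuring $h_n>0$. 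Using only the nowhere-density of $C$, one then verifies that the gaps lying near any $p\in C$ have small heights (for otherwise a whole neighborhood of $p$ would contain no gap at all), which gives continuity; and that, taking $J_{n_k}$ to be a longest gap inside $(p-1/k,\,p+1/k)$, its height is of the same order as its distance from $p$, which gives the required non-negligible difference quotients at the ``thick'' points. Carrying out this balance rigorously in full generality is the technical core; everything else is routine bookkeeping. (A Baire-category argument in the complete metric space $\{f\in C[0,1]:f\ge 0,\ f|_C\equiv 0\}$ is an alternative route, but its nowhere-density step meets the same obstacle of perturbing $f$ near the thick points of $C$.)
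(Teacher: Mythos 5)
Your necessity argument is correct, your diagnosis of why the naive choice $h_n=b_n-a_n$ fails matches the paper's account of Schubert's error, and your handling of the boundary gaps when $0\notin C$ or $1\notin C$ is a detail the paper's sketch glosses over. Your continuity claim for the position-dependent heights is also justifiable essentially as you indicate: if gaps $J_{n_k}\to p$ had $h_{n_k}\ge c>0$, then eventually $J_{n_k}\subset(p-c/2,p+c/2)$ and every gap meeting $(p-c/2,p+c/2)$ would have length at most $\ell_{n_k}\to 0$, contradicting nowhere density.

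The genuine gap is the non-differentiability verification at points of $C$ that are endpoints of no gap, which you explicitly defer as ``the technical core'' but which is the entire content of the sufficiency direction. The one concrete step you offer there --- that a longest gap $J$ inside $(p-1/k,\,p+1/k)$ has $h_J$ of the order of $\dist(p,J)$ --- does not follow from your definition of $h_n$: the nearest at-least-as-long gap to $J$ can sit immediately beside $J$, namely a longer gap whose left endpoint lies just past $J$ and which exits the window on the far side (it is longer than $J$ precisely because it is not contained in the window). In that configuration the distance term in your $\max$ is negligible and $h_J$ collapses to $\ell_J$, which can be far smaller than $\dist(p,J)$. One can try to repair this by passing to that adjacent longer gap $G$, whose length is at least $1/k-O(\dist(p,J))$ and hence at least $\dist(p,G)$ when $\dist(p,J)$ is small compared with $1/k$; but the complementary case $\dist(p,J)\asymp 1/k$ forces a recursion to smaller windows whose termination you have not established. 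This is exactly the balance that must be carried out, and it is where your route is harder than the paper's. Lipinski's device, reproduced in the paper, sidesteps it: enumerate the open dyadic intervals $A_n$, select in each $A_n$ one gap $F_n\subset A_n$ not previously selected (always possible, since a dyadic interval meeting $C$ in a non-endpoint contains infinitely many gaps), and give the bump on $F_n$ height $\lambda(A_n)$ rather than a quantity computed from the surrounding gap configuration. Then every dyadic interval containing $p\in C$ carries a selected bump of height equal to that interval's length, so at every dyadic scale there is a difference quotient from $p$ of size at least $2/3$, while the quotients along $C$ are $0$; continuity survives because only finitely many dyadic intervals exceed any fixed length. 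I would either adopt that selection scheme or supply your recursion in full.
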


It is easy to see that the condition on $C$ is necessary. To prove its sufficiency, Lipinski used the Takagi function to construct an example of a function $f$ with the required properties. To begin, write $[0,1]\backslash C=\bigcup_{n=1}^\infty (a_n,b_n)$ with the union disjoint. (If $[0,1]\backslash C$ is a finite union of open intervals, the construction is easy.) Define functions
\begin{equation*}
T_{a,b}(x)=(b-a)T\left(\frac{x-a}{b-a}\right), \qquad a\leq x\leq b,
\end{equation*}
where $0\leq a<b\leq 1$. Thus, the graph of $T_{a,b}$ is a smaller copy of the graph of $T$ confined to the interval $[a,b]$. It is tempting to construct $f$ by putting
\begin{equation*}
f(x)=\begin{cases}
0, & x\in C\\
T_{a_n,b_n}(x), & x\in(a_n,b_n), n\in\NN.
\end{cases}
\end{equation*}
Indeed, Schubert \cite{Schubert} had mistakenly believed that this creates a nowhere differentiable function. But, as Lipinski points out, $f$ defined this way can be differentiable (with $f'(x)=0)$ at many points of $C$. In fact, if $\lambda(C)>0$, then $f$ is differentiable almost everywhere on $C$. To correct this problem, Lipinski slightly modified the above construction as follows. Enumerate the dyadic open subintervals of $[0,1]$ (i.e. those of the form $(j/2^n,(j+1)/2^n)$) in some arbitrary order as $\{A_n: n\in\NN\}$. For each $n$, let $F_n$ be any interval $(a_{i_n},b_{i_n})$ contained in $A_n$ if such an interval exists, and $F_n=\emptyset$ otherwise, with the restriction that no interval $(a_i,b_i)$ is chosen more than once. Now define a function $\tilde{f}$ by
\begin{equation*}
\tilde{f}(x)=\begin{cases}
0, & x\in C\\
T_{a_i,b_i}(x), & x\in(a_i,b_i), (a_i,b_i)\not\in \{F_n\}\\
\lambda(A_n)T_{a_{i_n},b_{i_n}}(x)/(b_{i_n}-a_{i_n}), & x\in F_n. 
\end{cases}
\end{equation*}
Lipinski shows that $\tilde{f}$ is continuous and nowhere differentiable, and hence has the desired properties. Of course, in this construction we can replace $T$ with any continuous nowhere-differentiable function which is strictly positive in $(0,1)$ and vanishes at $0$ and $1$.

\subsubsection{Approximate convexity}

Our next application is to approximate convexity of real functions. Let $V$ be a convex subset of a normed space and let $\eps\geq 0$, $p>0$ be given constants. Say a function $f: V\to\RR$ is $(\eps,p)$-{\em midconvex} if
$$f\left(\frac{x+y}{2}\right)\leq\frac{f(x)+f(y)}{2}+\eps\|x-y\|^p, \qquad x,y\in V.$$
It was shown by H\'azy and P\'ales \cite{Hazy-Pales} that if $f:V\to\RR$ is continuous and $(\eps,1)$-midconvex, then for all $x,y\in V$ and $t\in[0,1]$,
\begin{equation}
f(tx+(1-t)y)\leq tf(x)+(1-t)f(y)+\eps \omega(t)\|x-y\|,
\label{eq:approx-convex}
\end{equation}
where $\omega=2T$. 
A natural question is whether $\omega$ in this last inequality can be replaced by a smaller function. The negative answer came a few years later, when Boros \cite{Boros} proved that $\omega$ is itself $(1,1)$-convex. In terms of $T$, this comes down to the inequality
\begin{equation*}
T\left(\frac{x+y}{2}\right)\leq \frac{T(x)+T(y)}{2}+\left|\frac{x-y}{2}\right|.
\end{equation*}
(To see that this implies the minimality of $\omega$ in \eqref{eq:approx-convex}, take $f=\eps\omega$, $x=1$ and $y=0$.) Boros' proof was somewhat laborious, with no fewer than eight separate cases in the induction step. But the important thing was that it confirmed P\'ales' conjecture of the minimality of $\omega$.

The next question, then, is what function takes the role of $\omega$ when $p\neq 1$. More precisely, what is the smallest function $\omega_p$ such that, whenever $f:V\to\RR$ is $(\eps,p)$-midconvex, we have
\begin{equation}
f(tx+(1-t)y)\leq tf(x)+(1-t)f(y)+\eps \omega_p(t)\|x-y\|^p
\label{eq:p-convexity}
\end{equation}
for all $x,y\in V$ and $t\in[0,1]$? For $p\in[1,2]$, Tabor and Tabor \cite{Tabor1,Tabor2} showed that the answer is the function
\begin{equation}
\omega_p(x)=2\sum_{n=0}^\infty \frac{1}{2^{np}}\phi(2^n x),
\label{eq:p-Takagi}
\end{equation}
which belongs to the Takagi class. In \cite{Tabor1} they show that any $(\eps,p)$-midconvex function satisfies \eqref{eq:p-convexity}, and in \cite{Tabor2} they establish that $\omega_p$ is itself $(1,p)$-midconvex; that is,
\begin{equation}
\omega_p\left(\frac{x+y}{2}\right)\leq \frac{\omega_p(x)+\omega_p(y)}{2}+|x-y|^p.
\label{eq:approximate-convexity}
\end{equation}
Note that $\omega_2(x)=4x(1-x)$ for $x\in[0,1]$, and $\omega_1(x)=2T(x)$. Tabor and Tabor prove \eqref{eq:approximate-convexity} first for $p=2$, and then deduce it for all $p\in[1,2]$ by expressing $\omega_p$ as an infinite series in terms of $\omega_2$. A different proof of their result (which includes that of Boros) is given by Allaart \cite{Allaart3}, who derives the formula
\begin{equation}
\omega_p\left(\frac{m}{2^n}\right)=\sum_{k=0}^{m-1}\sum_{i=0}^{n-1}\frac{{(-1)}^{\eps_i(k)}}{2^{(n-i-1)p+i}},
\label{eq:expression}
\end{equation}
where $\eps_i(k)\in\{0,1\}$ is determined by $\sum_{i=0}^{n-1}2^i\eps_i(k)=k$. 
Using this expression, \eqref{eq:approximate-convexity} can be reduced to a simple inequality for weighted sums of binary digits, which has an easy induction proof; see \cite{Allaart3}.

Shortly after Tabor and Tabor's work appeared, M\'ako and P\'ales \cite{Mako-Pales} proved a much more general result, which has the following remarkable consequence: For $p\in(0,1]$, the minimal function $\omega_p$ in \eqref{eq:p-convexity} is given by
\begin{equation}
\omega_p(x)=2\sum_{n=0}^\infty \frac{1}{2^n}(\phi(2^n x))^p.
\label{eq:funny-series}
\end{equation}
Comparing \eqref{eq:p-Takagi} and \eqref{eq:funny-series}, it is fascinating to see how at the boundary case $p=1$, the exponent $p$ ``jumps" to the other factor in the series' summand.

For more references to papers concerning approximate convexity, we refer to \cite{Mako-Pales}.

\subsection{Connection with the Riemann Hypothesis} \label{subsec:app-Riemann}

To end this section, we briefly mention a recent result connecting the Takagi function with the Riemann Hypothesis. For a more detailed account, we refer to Lagarias \cite{Lagarias}.

Let $F_n$ be the $n$th Farey sequence (also called Farey series), that is, the set of irreducible fractions in $(0,1]$ with denominator less than or equal to $n$. Then $|F_n|=\Phi(n)$, where $\Phi(n)=\sum_{k\leq n}\varphi(k)$, $\varphi$ being the Euler totient function. The connection between Farey series and the Riemann Hypothesis is well documented and goes back to a 1924 paper by Franel \cite{Franel}.
Recently, Balasubramanian, Kanemitsu and Yoshimoto \cite{Balasub} showed, as an interesting example of their more general theory, that the Riemann Hypothesis is equivalent to the statement
\begin{equation}
\sum_{r\in F_n} T(r)=\frac12\Phi(n)+O(n^{\frac12+\eps}) \qquad\mbox{for every $\eps>0$}.
\label{eq:Takagi-and-Riemann}
\end{equation}
Note that the left hand side involves only function values of rational numbers, and can hence be computed by the method of Section \ref{subsec:rational}. While it may seem unlikely that the Riemann Hypothesis will some day be solved by directly proving \eqref{eq:Takagi-and-Riemann}, the connection is nonetheless surprisingly elegant and beautiful.

\section*{Acknowledgments}
We thank Professors N. Katzourakis and H. Sumi for comments on an earlier draft, and Professor E. de Amo for bringing the paper by Tambs-Lyche to our attention and sending us a preprint of \cite{ADCFS}. We thank Professor J. Lagarias for sending an early draft of his survey \cite{Lagarias}.

\footnotesize


\begin{thebibliography}{60}

\bibitem{Abbott}
{\sc S. Abbott}, {\sc J. M. Anderson} and {\sc L. D. Pitt}, Slow points for functions in the Zygmund class $\Lambda_d^*$, {\em Real Anal. Exchange} {\bf 32} (2006/07), no. 1, 145--170.

\bibitem{Allaart1}
{\sc P. C. Allaart}, Distribution of the extrema of random Takagi functions, {\em Acta Math. Hungar.} {\bf 121} (2008), no. 3, 243--275.

\bibitem{Allaart2}
{\sc P. C. Allaart}, On a flexible class of continuous functions with uniform local structure, {\em J. Math. Soc. Japan} {\bf 61} (2009), no. 1, 237--262.

\bibitem{Allaart3}
{\sc P. C. Allaart}, An inequality for sums of binary digits, with application to Takagi functions, {\em J. Math. Anal. Appl.} {\bf 381} (2011), no. 2, 689--694.

\bibitem{Allaart4}
{\sc P. C. Allaart}, How large are the level sets of the Takagi function?, {\em preprint}, http://arxiv.org/abs/1102.1616 (2011)

\bibitem{Allaart5}
{\sc P. C. Allaart}, On the distribution of the cardinalities of level sets of the Takagi function, {\em preprint}, http://arxiv.org/abs/1107.0712 (2011)

\bibitem{AK1}
{\sc P. C. Allaart} and {\sc K. Kawamura}, Extreme values of some continuous, nowhere differentiable
functions, {\em Math. Proc. Camb. Phil. Soc.} {\bf 140} (2006), no. 2, 269--295.

\bibitem{AK2}
{\sc P. C. Allaart} and {\sc K. Kawamura}, The improper infinite derivatives of Takagi's nowhere-differentiable function, {\em J. Math. Anal. Appl.} {\bf 372} (2010), no. 2, 656--665.

\bibitem{ABDF}
{\sc E. de Amo, I. Bhouri, M. D\'iaz Carrillo}, and {\sc J. Fern\'andez-S\'anchez}, The Hausdorff dimension of the level sets of Takagi's function, {\em Nonlinear Anal.} {\bf 74} (2011), no. 15, 5081--5087.

\bibitem{ADCFS}
{\sc E. de Amo, M. D\'iaz Carrillo}, and {\sc J. Fern\'andez-S\'anchez}, Singular functions with applications to fractals and
generalized Takagi functions, {\em preprint} (2011).

\bibitem{deAmo}
{\sc E. de Amo} and {\sc J. Fern\'andez-S\'anchez}, Takagi's function revisited from an arithmetical point of view, {\em Int. J. Pure Appl. Math.} {\bf 54} (2009), no. 3, 407--427.

\bibitem{Anderson}
{\sc J. M. Anderson} and {\sc L. D. Pitt}, Probabilistic behavior of functions in the Zygmund spaces $\Lambda^*$ and $\lambda^*$, {\em Proc. London Math. Soc.} {\bf 59} (1989), no. 3, 558-592.

\bibitem{Baba}
{\sc Y. Baba}, On maxima of Takagi-van der Waerden functions, {\em Proc. Amer. Math. Soc.} {\bf 91} (1984), no. 3, 373--376.

\bibitem{Balasub}
{\sc R. Balasubramanian}, {\sc S. Kanemitsu} and {\sc M. Yoshimoto}, Euler products, Farey series, and the Riemann hypothesis. II, {\em Publ. Math. Debrecen} {\bf 69} (2006), no. 1-2, 1--16.

\bibitem{Begle}
{\sc E. G. Begle} and {\sc W. L. Ayres}, On Hildebrandt's example of a function without a finite derivative, {\em Amer. Math. Monthly} {\bf 43} (1936), no. 5, 294--296.

\bibitem{Besicovitch}
{\sc A. S. Besicovitch} and {\sc H. D. Ursell}, Sets of fractional dimensions (V). On dimensional numbers of some continuous curves, {\em J. London Math. Soc.} {\bf 12} (1937), 18--25.

\bibitem{Billingsley}
{\sc P. Billingsley}, Van Der Waerden's Continuous Nowhere Differentiable Function, {\em Amer. Math. Monthly} {\bf 89} (1982), no. 9, 691.

\bibitem{Boros}
{\sc Z. Boros}, An inequality for the Takagi function. {\em Math. Inequal. Appl.} {\bf 11} (2008), no. 4, 757--765.

\bibitem{Brown}
{\sc J. B. Brown} and {\sc G. Kozlowski}, Smooth interpolation, H\"older continuity, and the Takagi-van der Waerden function. {\em Amer. Math. Monthly} {\bf 110} (2003), no. 2, 142--147.

\bibitem{Buczolich1}
{\sc Z. Buczolich}, Micro tangent sets of continuous functions, {\em Math. Bohem.} {\bf 128} (2003), 147--167.

\bibitem{Buczolich2}
{\sc Z. Buczolich}, Irregular 1-sets on the graphs of continuous functions. {\em Acta Math. Hungar.} {\bf 121} (2008),  no. 4, 371--393.

\bibitem{Cater}
{\sc F. S. Cater}, On van der Waerden's nowhere differentiable function. {\em Amer. Math. Monthly} {\bf 91} (1984), no. 5, 307--308.

\bibitem{Coquet}
{\sc J. Coquet}, Power sums of digital sums {\em J. Number Theory} {\bf 22} (1986), 161--176.

\bibitem{Darsow}
{\sc W. F. Darsow}, {\sc M. J. Frank} and {\sc H.-H. Kairies}, Errata: ``Functional equations for a function of van der Waerden type", {\em Rad. Mat.} {\bf 5} (1989), no. 1, 179--180.

\bibitem{Delange}
{\sc H. Delange}, Sur la fonction sommatoire de la fonction ``somme des chiffres", {\em Enseignement Math.} {\bf 21} (1975), 31--47.

\bibitem{Faber} 
{\sc G. Faber}, Einfaches Beispiel einer stetigen nirgends differenzierbaren Funktion, {\em Jahresber. Deutschen Math.-Verein} {\bf 16} (1907), 538--540.

\bibitem{Franel}
{\sc J. Franel}, Les suites de Farey et les probl\'emes des nombres premiers, {\em Nachr. Ges. Wiss. G\"ottingen}, Math.-Phys. Kl (1924), 198--201.

\bibitem{Frankl}
{\sc P. Frankl}, {\sc M. Matsumoto}, {\sc I. Z. Ruzsa} and {\sc N. Tokushige}, Minimum shadows in uniform hypergraphs and a generalization of the Takagi function. {\em J. Combin. Theory Ser. A} {\bf 69} (1995), no. 1, 125--148. 

\bibitem{Gamkrelidze} 
{\sc N. G. Gamkrelidze}, On a probabilistic properties of Takagi's function (sic), {\em J. Math. Kyoto Univ.} {\bf 30} (1990), no. 2, 227--229.

\bibitem{Guu}
{\sc C. J. Guu}, The McFunction [The Takagi function]. Selected topics in discrete mathematics (Warsaw, 1996). {\em Discrete Math.} {\bf 213} (2000), no. 1-3, 163--167.

\bibitem{Hata-Yamaguti}
{\sc M.~Hata} and {\sc M.~Yamaguti}, Takagi function and its generalization, {\em Japan J. Appl. Math.} {\bf 1} (1984), 183--199.

\bibitem{Hazy-Pales}
{\sc A. H\'azy} and {\sc Zs. P\'ales}, On approximately midconvex functions, {\em Bull. London Math. Soc.} {\bf 36} (2004), 339--350.

\bibitem{Hildebrandt}
{\sc T. H. Hildebrandt}, A simple continuous function with a finite derivative at no point, {\em Amer. Math. Monthly} {\bf 40} (1933), no. 9, 547--548.

\bibitem{Kahane}
{\sc J.-P.~Kahane}, Sur l'exemple, donn\'e par M.~de Rham, d'une fonction continue sans d\'eriv\'ee, {\em Enseignement Math.} {\bf 5} (1959), 53--57.

\bibitem{Kairies}
{\sc H.-H.~Kairies}, Takagi's function and its functional equations, {\em Rocznik Nauk.-Dydakt. Prace Mat.} {\bf 15} (1998), 73--83.

\bibitem{KaiDarFra}
{\sc H.-H. Kairies}, {\sc W. F. Darsow} and {\sc M. J. Frank}, Functional equations for a function of van der Waerden type. {\em Rad. Mat.} {\bf 4} (1988), no. 2, 361--374. 

\bibitem{Katzourakis}
{\sc N. I. Katzourakis}, A H\"older continuous nowhere improvable function with derivative singular distribution, {\em preprint}, http://arxiv.org/abs/1011.6071v2 (2011).

\bibitem{Kawamura1}
{\sc K.~Kawamura}, On the classification of self-similar sets determined by two contractions on the plane, {\em J. Math. Kyoto Univ.} {\bf 42} (2002), 255--286. 

\bibitem{Kawamura2}
{\sc K.~Kawamura}, At which points exactly has Lebesgue's singular function the derivative zero?, {\em preprint}, http://arxiv.org/abs/1012.5535 (2010)

\bibitem{Knopp}
{\sc K. Knopp}, Ein einfaches Verfahren zur Bildung stetiger nirgends differenzierbarer Funktionen. {\em Math. Zeitschr.} {\bf 2} (1918), 1--26.

\bibitem{Knuth}
{\sc D. E. Knuth}, {\em The art of computer programming, Vol. 4, Fasc. 3}, Addison-Wesley: Upper Saddle River, NJ, 2005.

\bibitem{Kobayashi} 
{\sc Z. Kobayashi}, Digital sum problems for the Gray code representation of natural numbers, {\em Interdiscip. Inform. Sci.} {\bf 8} (2002), 167--175.

\bibitem{Digital-survey}
{\sc Z. Kobayashi}, {\sc T. Okada}, {\sc T.~Sekiguchi} and {\sc Y.~Shiota}, Applications of measure theory to digital sum problems,
{\em Sem. Math. Sci., Keio Univ.} {\bf 35} (2006), 95--109.

\bibitem{Kono} 
{\sc N.~K\^ono}, On generalized Takagi functions, {\em Acta Math. Hungar.} {\bf 49} (1987), 315--324.

\bibitem{Kruppel1}
{\sc M.~Kr\"uppel}, On the extrema and the improper derivatives of Takagi's continuous nowhere differentiable function, {\em Rostock. Math. Kolloq.} {\bf 62} (2007), 41--59.

\bibitem{Kruppel2}
{\sc M. Kr\"uppel}, Takagi's continuous nowhere differentiable function and binary digital sums, {\em Rostock. Math. Kolloq.} {\bf 63} (2008), 37--54.

\bibitem{Kruppel3}
{\sc M. Kr\"uppel}, On the improper derivatives of Takagi's continuous nowhere differentiable function, {\em Rostock. Math. Kolloq.} {\bf 65} (2010), 3--13

\bibitem{Kuroda}
{\sc S. T. Kuroda}, Private communication.

\bibitem{Lagarias}
{\sc J. C. Lagarias}, The Takagi function and its properties, preprint.

\bibitem{LagMad1}
{\sc J. C. Lagarias} and {\sc Z. Maddock}, Level sets of the Takagi function: local level sets, arXiv:1009.0855 (2010)

\bibitem{LagMad2}
{\sc J. C. Lagarias} and {\sc Z. Maddock}, Level sets of the Takagi function: generic level sets, arXiv:1011.3183 (2010)

\bibitem{Ledrappier} 
{\sc F. Ledrappier}, On the dimension of some graphs, {\em Contemp. Math.} {\bf 135} (1992), 285--293.

\bibitem{Lipinski}
{\sc J. S. Lipinski}, On zeros of a continuous nowhere differentiable function, {\em Amer. Math. Monthly} {\bf 73} (1966), no. 2, 166--168.

\bibitem{Lomnicki-Ulam-1934}
{\sc Z.~Lomnicki} and {\sc S.~Ulam}\@, Sur la th\'eorie de la mesure dans les espaces combinatoires et son application au calcul des probabilit\'es I. Variables ind\'ependantes. {\em Fund.\ Math.}\ {\bf 23} (1934), 237--278.

\bibitem{Maddock}
{\sc Z. Maddock}, Level sets of the Takagi function: Hausdorff dimension, {\em Monatsh. Math.} {\bf 160} (2010), no. 2, 167--186.

\bibitem{Mako-Pales}
{\sc J. Mak\'o} and {\sc Zs. P\'ales}, Approximate convexity of Takagi type functions, {\em J. Math. Anal. Appl.} {\bf 369} (2010), 545--554.

\bibitem{Martynov1}
{\sc B. Martynov}, On maxima of the van der Waerden function, {\em Kvant} (1982) June, 8--14 (in Russian).

\bibitem{Martynov2}
{\sc B. Martynov}, Van der Waerden's pathological function: examining a ``miserable sore". {\em Quantum} {\bf 8} (1998), no. 6, 12--19.

\bibitem{Mauldin-Williams}
{\sc R.~Mauldin} and {\sc S.~Williams}, On the Hausdorff dimension of some graphs, {\em Trans. Amer. Math. Soc.} {\bf 298} (1986), 793--803. 


\bibitem{Odani}
{\sc K. Odani}, On Takagi's nowhere differentiable function, {\em Sugaku} {\bf 47} (1995), 422-423 (in Japanese)

\bibitem{Okada}
{\sc T.~Okada}, {\sc T.~Sekiguchi} and {\sc Y.~Shiota}, An explicit formula of the exponential sums of digital sums, {\em Japan J. Indust. Appl. Math.} {\bf 12} (1995), 425--438.

\bibitem{deRham1}
{\sc G. de Rham}, Sur un exemple de fonction continue sans d\'eriv\'ee. {\em Enseignement Math.} {\bf 3} (1957), 71--72.

\bibitem{deRham2}
{\sc G. de Rham}, Sur quelques courbes definies par des equations fonctionnelles, {\em Rend. Sem. Mat. Torino} {\bf 16} (1957), 101--113.

\bibitem{Salem}
{\sc R. Salem}, On some singular monotonic functions which are strictly increaing, {\em Trans. Amer. Math. Soc.} {\bf 53} (1943), 427--439.

\bibitem{Savage}
{\sc C. D. Savage}, A survey of combinatorial Gray codes, {\em SIAM Rev.} {\bf 39} (1997), 605--629.

\bibitem{Schubert}
{\sc S. R. Schubert}, On A Function of Van Der Waerden. {\em Amer. Math. Monthly} {\bf 70} (1963), no. 4, 402.

\bibitem{Sekiguchi-Shiota}
{\sc T.~Sekiguchi} and {\sc Y.~Shiota}, A generalization of Hata-Yamaguti's results on the Takagi function. {\em Japan J. Appl. Math.} {\bf 8} (1991), 203--219.

\bibitem{Shidfar}
{\sc A. Shidfar} and {\sc K. Sabetfakhri}, On the Continuity of Van Der Waerden's Function in the H\"older Sense, {\em Amer. Math. Monthly} {\bf 93} (1986), no. 5, 375--376.

\bibitem{Shidfar2}
{\sc A. Shidfar} and {\sc K. Sabetfakhri}, On the H\"older continuity of certain functions, {\em Exposition. Math.} {\bf 8} (1990), 365--369.

\bibitem{Solomyak}
{\sc B. Solomyak}, On the random series $\sum\pm\lambda^n$ (an Erd\H os problem), {\em Ann. Math. (2)} {\bf  142} (1995), no. 3, 611--625. 

\bibitem{Stolarsky}
{\sc K. B. Stolarsky}, Power and exponential sums of digital sums related to binomial coefficient parity, {\em SIAM J. Appl. Math.} {\bf 32} (1977), no. 4, 713--730.

\bibitem{Sumi}
{\sc H. Sumi}, Cooperation principle, stability and bifurcation in random complex dynamics, {\em preprint},
http://arxiv.org/abs/1008.3995 (2010).

\bibitem{Tabor1}
{\sc J. Tabor} and {\sc J. Tabor}, Generalized approximate midconvexity, {\em Control Cybernet.} {\bf 38} (2009), no. 3, 655--669.

\bibitem{Tabor2}
{\sc J. Tabor} and {\sc J. Tabor}, Takagi functions and approximate midconvexity, {\em J. Math. Anal. Appl.} {\bf 356} (2009), no. 2, 729--737.

\bibitem{Takagi} 
{\sc T.~Takagi}, A simple example of the continuous function without derivative, {\em Phys.-Math. Soc. Japan} {\bf 1} (1903), 176-177. {\em The Collected Papers of Teiji Takagi}, S. Kuroda, Ed., Iwanami (1973), 5--6. 

\bibitem{Tambs-Lyche}
{\sc R. Tambs-Lyche}, Une fonction continue sans d\'eriv\'ee, {\em Enseignement Math.} {\bf 38} (1939/40), 208--211.

\bibitem{Trollope}
{\sc J. R. Trollope}, An explicit expression for binary digital sums, {\em Math. Mag.} {\bf 41} (1968), 21--25.

\bibitem{Tsujii}
{\sc Y. Tsujii}, On modified Takagi functions of two variables, {\em J. Math. Kyoto Univ.} {\bf 25} (1985), no. 3, 577--581.

\bibitem{vdW}
{\sc B. W. van der Waerden}, Ein einfaches Beispiel einer nicht-differenzierbaren stetigen Funktion, {\em Math. Z.} {\bf 32} (1930), 474--475.

\bibitem{Watanabe}
{\sc H. Watanabe}, On the scaling exponents of Takagi, L´evy and Weierstrass functions (English summary), {\em Hokkaido Math. J.} {\bf 30} (2001), no. 3, 589--604.

\bibitem{YamTanMi}
{\sc Y. Yamaguchi}, {\sc K. Tanikawa} and {\sc N. Mishima}, Fractal basin boundary in dynamical systems and the Weierstrass-Takagi functions, {\em Phys. Lett. A} {\bf 128} (1988), no. 9, 470--478. 

\bibitem{Zygmund} 
{\sc A. Zygmund}, Smooth functions, {\em Duke Math. J.} {\bf 12} (1945), 47--76.

\end{thebibliography}
\end{document}